\theoremstyle{plain}
\newtheorem{theorem}{Theorem}[section]
\newtheorem{corollary}[theorem]{Corollary}
\newtheorem{lemma}[theorem]{Lemma}
\theoremstyle{definition}
\newtheorem{definition}[theorem]{Definition}
\newtheorem{remark}[theorem]{Remark}
\newtheorem{example}[theorem]{Example}
\newtheorem{proposition}[theorem]{Proposition}
\newcommand{\C}{\mathbb{C}}
\newcommand{\Q}{\mathbb{Q}}
\newcommand{\R}{\mathbb{R}}
\newcommand{\Z}{\mathbb{Z}}
\newcommand{\calF}{\mathcal{F}}
\newcommand{\calK}{\mathcal{K}}
\newcommand{\calM}{\mathcal{M}}
\newcommand{\calP}{\mathcal{P}}
\newcommand{\Tor}{\operatorname{Tor}} % Tor functor
\newcommand{\Hahn}{\mathscr{H}} % magnitude
\newcommand{\Mag}{\operatorname{Mag}} % magnitude
\newcommand{\MH}{\operatorname{MH}} % magnitude homology
\newcommand{\MC}{\operatorname{MC}} % magnitude chain complex
\newcommand{\Cau}{\operatorname{Cau}} %causal poset
\newcommand{\ess}{\operatorname{ess}} %essential part
\newcommand{\Deltadot}{\Delta'} %Deltadot 
\newcommand{\inter}{\operatorname{int}} %interior of gluing. 
\newcommand{\emptycpx}{\{\emptyset\}} %empty simplicial complex
\newcommand{\void}{{\bf void}} %void
\newcommand{\Top}{\operatorname{\sf Top}} % category of topological spaces
\newcommand{\Met}{\operatorname{\sf Met}} % category of metric spaces
\newcommand{\rank}{\operatorname{rank}}
\newcommand{\Sus}{\operatorname{\sf \Sigma}} % reduced suspension
\newcommand{\Cone}{\operatorname{\Gamma}} % cone
\title{Causal order complex and magnitude homotopy type of metric spaces} 
\author{Yu Tajima\thanks{Department of Mathematics, Graduate School of Science, 
Hokkaido University, North 10, West 8, Kita-ku, 
Sapporo 060-0810, JAPAN 
E-mail: tajima@math.sci.hokudai.ac.jp}, 
Masahiko Yoshinaga\thanks{Department of Mathematics, Graduate School of Science, Osaka University, Toyonaka 560-0043, JAPAN 
E-mail: yoshinaga@math.sci.osaka-u.ac.jp}}
\date{\today}
\begin{document}
\maketitle

\begin{abstract} 
In this paper, we construct a pointed CW complex 
called the magnitude homotopy type for a given metric space $X$ 
and a real parameter $\ell \geq 0$. 
This space is roughly consisting of all paths of length $\ell$ and 
has the reduced homology group that is isomorphic to the 
magnitude homology group of $X$. 

To construct the magnitude homotopy type, we consider 
the poset structure on the spacetime 
$X\times\mathbb{R}$ defined by causal (time- or light-like) relations. 
The magnitude homotopy type 
is defined as the quotient of the order complex of an intervals on 
$X\times\mathbb{R}$ by a certain subcomplex. 

The magnitude homotopy type gives a covariant functor from 
the category of metric spaces with $1$-Lipschitz maps 
to the category of pointed topological spaces. 
The magnitude homotopy type also 
has a ``path integral'' like expression for certain metric spaces. 

By applying discrete Morse theory to the magnitude homotopy type, 
we obtain a new proof of the Mayer-Vietoris type theorem and several new 
results including the invariance of the magnitude under sycamore twist of 
finite metric spaces. 
\end{abstract}

\tableofcontents

\section{Introduction}

The concept of magnitude, introduced by Leinster \cite{Lei-met}, is a real valued 
cardinality-like numerical invariant for metric spaces. 
The magnitude is not just ``the rough size'' of metric spaces but, 
considered to be ``the effective number'' of points in the space, 
since it has fine structures e.g., additivity, multiplicativity, invariance under 
the Whitney twist (\cite{L}). 
The study of magnitude from analytic perspectives is also very active 
\cite{bar-car, gim-gof, mec-mag, wil-odd}. See \cite{lei-mec} for overviews of 
recent developments in magnitudes. 

Later, Hepworth-Willerton \cite{HW} 
and Leinster-Shulman \cite{lei-shu} have proposed the notion of the 
magnitude homology group as a categorification of the magnitude. 
It has been established that for closed sets in Euclidean space, 
the magnitude homology reflects properties such as convexity \cite{lei-shu} 
and the diameter of a hole \cite{KY}. 
In recent years, the notion of magnitude homology group 
has been studied in relation to various research topics such as 
path homology of graphs \cite{a-path}, random graphs \cite{ahk}, 
topological invariants of point clouds \cite{gov-hep, alpha}, 
magnitude cohomology \cite{hep-coh} etc. 
However, the information captured by the magnitude homology 
for general metric spaces, including finite ones, is not well understood.

The main objective of this paper is to investigate the causal poset structure of 
metric spaces and use it to construct the topological space which is 
referred to as 
\emph{the magnitude homotopy type}. 
We also apply the magnitude homotopy type to several problems. 

It should be noted that this space has already been dealt with 
in previous works by Hepworth-Willerton \cite{HW} (for graphs) and 
Bottinelli-Kaiser \cite{bot-kai} (for metric spaces). 
Asao-Izumihara \cite{AI} has also constructed a closely related space for 
graphs. 
One of the most crucial points in \cite{AI} was the introduction of the 
``time parameter $t$'' to specify the vertex $(x_t, t)$ in the simplicial complex.

In this paper, we provide an interpretation of Asao-Izumihara type 
space using the ``\emph{Causal poset structures}'' on 
the metric spacetime. 
The magnitude homotopy type can be constructed as a quotient of 
the order complex of causal poset by a certain subcomplex. 
Our construction gives an explicit presentation of the magnitude homotopy type as 
a pair of simplicial complexes. 
Thanks to the new presentation, it has become possible to apply 
tools from poset topology, particularly discrete Morse theory, 
to study the magnitude and magnitude homology.

The basic idea of the magnitude homotopy type $\calM^\ell(X; a, b)$ 
is to consider the space of all paths in $X$ that start 
at point $a$ at time $t=0$ and end at point $b$ at time $t=\ell$, 
where $X$ is a metric space and $a, b\in X$ ($a=b$ is allowed). 
The precise formulation of $\calM^\ell(X; a, b)$ 
is based on the order complex of the poset 
defined by causal order structures on $X\times\R$ 
(See \S \ref{sec:pre}, \S \ref{sec:causal}, and \S \ref{sec:mht} 
for details). 

Let us now summarize the formal aspects of the magnitude 
homotopy type. 
We denote the category of $2$-pointed metric spaces with 
$1$-Lipschitz maps by $\Met_{**}^1$. Namely, an object of 
$\Met_{**}^1$ is a tuple $(X; a, b)$, 
and a morphism $(X; a, b)\stackrel{f}{\longrightarrow}(Y, c, d)$ 
is a continuous map $f:X\longrightarrow Y$ which satisfies 
$d_Y(f(x), f(x'))\leq d_X(x, x')$, $f(a)=c$ and $f(b)=d$. 
Let $\ell\in\R_{\geq 0}$. 
The magnitude homotopy type $\calM^\ell$ is a covariant functor 
\begin{equation}
\calM^\ell:\Met_{**}^1\longrightarrow\Top_*, \ \ 
(X; a, b)\longmapsto \calM^\ell(X; a, b), 
\end{equation}
from $\Met_{**}^1$ to the category of pointed topological spaces 
$\Top_*$. 

Besides functoriality, the magnitude homotopy type also possesses 
several nice properties as follows. 
\begin{itemize}
\item 
The magnitude homology group $\MH^\ell_*(X)$ is isomorphic 
to the direct sum of reduced homology groups 
$\bigoplus_{a, b\in X}\widetilde{H}_*(\calM^\ell(X; a, b))$ 
(Theorem \ref{thm:mainisom}). 
\item 
The Euler characteristics 
$\{\widehat{\chi}(\calM^\ell(X; a, b))\}_{\ell\in\R_{\geq 0}, 
a, b\in X}$ recover the metric space $X$ 
(Theorem \ref{thm:recover}). 
\item 
The magnitude homotopy type is the double suspension of 
the space constructed by Asao-Izumihara \cite{AI} 
(Theorem \ref{thm:double}). 
\item 
The magnitude homotopy type satisfies K\"unneth formula 
as established in \cite{HW, bot-kai}. 
(Theorem \ref{thm:kunneth}). 
\item 
The magnitude homotopy type $\calM^\ell(X; a, b)$ behaves 
similarly to the probability amplitude in quantum mechanics and 
has a ``path integral'' like expression 
(Theorem \ref{thm:dec}, Remark \ref{rem:pertexp} and 
comments around (\ref{eq:inverse2})). 
\item 
The magnitude homotopy type satisfies a type of excision and 
Mayer-Vietoris formula 
as established in \cite{HW, bot-kai} (Proposition \ref{prop:decompH}, 
Theorem \ref{thm:union}, Corollary \ref{cor:mv}). 
\end{itemize}

The paper is organized as follows: 
In \S \ref{sec:MH}, we review the definitions of the magnitude for 
a finite metric space and 
the magnitude homology group for a general metric space. 
In \S \ref{sec:DMT}, we recall basic definitions and results 
on discrete Morse theory for simplicial complexes. 
As long as the authors know, many previous works on 
discrete Morse theory assume finiteness of simplicial 
complexes (Theorem \ref{thm:morse}). 
However, in this paper, we need an infinite simplicial complex 
version of the ``Main theorem of Discrete Morse Theory''. 
Therefore, we formulate and prove the version we need. 

In \S \ref{sec:causal}, we define the \emph{causal poset} 
structure on $X\times\R$, 
motivated by the \emph{time-like-} or \emph{light-like-} 
relation in the Minkowski spacetime. 
In \S \ref{sec:const}, we define the \emph{magnitude homotopy type} 
$\calM^\ell(X)$ of a metric space $X$ 
as a pair (or quotient) of the order complex of an interval of the causal poset and 
a certain subcomplex. 
The remainder of \S \ref{sec:mht} and \S \ref{sec:applic}, with the exception 
of \S \ref{sec:inv}, is devoted to proving the properties listed above. 
In \S \ref{sec:inv}, we apply discrete Morse theory to the magnitude homotopy type, 
and prove that the magnitude is invariant under sycamore twist, which generalizes 
a recent result by Roff \cite{roff}. 

\section{Preliminaries}
\label{sec:pre}

\subsection{Magnitude and magnitude homology}
\label{sec:MH}

In this section, we review the notion of the magnitude and the magnitude homology groups for 
metric spaces. 
Let $X$ be a finite metric space. We define the square matrix 
$Z_X$ to be 
\begin{equation}
Z_X=\left(q^{d(x, y)}\right)_{x, y\in X}, 
\end{equation}
where $q=e^{-1}$ (however, we consider $q$ to be 
a formal variable below). 
Since $d(x, x)=0$, the diagonal entries of $Z_X$ are all $1$, 
and the off-diagonal entries 
have strictly positive powers. The entries of the formal sum 
$\sum_{k=0}^\infty (I-Z_X)^k$ 
converge in the ring of Hahn series $\Hahn$ which is defined as 
\[
\Hahn=\left\{\left.\sum_{r\in\R_{\geq 0}}c_rq^r\right| c_r\in\C, 
\mbox{ the support }
\{r\in\R_{\geq 0}\mid c_r\neq 0\}\mbox{ is well-ordered}\right\}. 
\]
The ring $\Hahn$ is endowed with 
the $q$-adic topology, namely, the topology with 
a basis $\{f+q^r\Hahn\mid f\in\Hahn, r>0\}$. 
Let $r_0=\min\{d(x, y)\mid x, y\in X, x\neq y\}$. 
Then the entries 
of a matrix $I-Z_X$ in contained in $q^{r_0}\Hahn$.  
Since $r_0>0$, the right-hand side of 
\begin{equation}
\label{eq:inverse1}
Z_X^{-1}=(I-(I-Z_X))^{-1}=\sum_{k=0}^\infty (I-Z_X)^k
\end{equation}
converges in $\Hahn$. 
We denote by $Z_X^{-1}(x, y)\in\Hahn$ the $(x, y)$ entry 
of the matrix $Z_X^{-1}$. 
Note that (\ref{eq:inverse1}) is equivalent to the following 
formula\footnote{This formula can be thought of as analogous to the 
perturbative expansion of the amplitude of a particle 
interacting with a potential 
\cite[\S 6.2, (6.17)]{fey-hib}.}  
\begin{equation}
\label{eq:inverse2}
Z_X^{-1}(x, y)=
\sum_{k=0}^\infty(-1)^k\cdot
\sum_{\substack{a_0, \dots, a_k\in X\\ a_0=x, a_k=y, \\a_{i-1}\neq a_i (i=1, \dots, k)}}
e^{d(a_0, a_1)+\dots+d(a_{k-1}, a_k)}. 
\end{equation}
Then the \emph{magnitude weighting} $w:X\longrightarrow\Hahn$ and the \emph{magnitude} 
$\Mag(X)\in\Hahn$ is defined as 
\begin{equation}
\begin{split}
w(x)&:=\sum_{y\in X}Z_X^{-1}(x, y), \\
\Mag(X)&:=\sum_{x\in X}w(x)=\sum_{x, y\in X}Z_X^{-1}(x, y). 
\end{split}
\end{equation}

Next we define the magnitude homology group. Let $X$ be a metric space (not necessarily finite). 
We say that $\bm{x}=(x_0, x_1, \cdots, x_k)\in X^{k+1}$ 
is a \emph{sequence of degree $k$} if $x_{i-1}\neq x_{i}$ for any $i\in\{1, 2, \cdots, k\}$. 
Let $\bm{x}=(x_0, x_1, \cdots, x_k)\in X^{k+1}$ be a sequence. 
The length of a sequence 
$\bm{x}=(x_0, \dots, x_k)$ is defined as $d(\bm{x}):=d(x_0, x_1)+d(x_1, x_2)+\cdots+d(x_{k-1}, x_k)$.  

\begin{definition}[Magnitude homology of 
metric spaces \cite{HW, lei-shu}]
\label{def:MC}
Fix $\ell\geq 0$.
Define the abelian group $\MC_k^\ell(X)$ and the map $\partial$ as follows.
\begin{equation}
\label{eq:bdry}
\begin{split}
&\MC_k^\ell(X):=\bigoplus_{\substack{\bm{x}=(x_0, \cdots, x_k)\in X^{k+1}\\
\text{ sequence with }d(\bm{x})=\ell}} 
\Z\bm{x}, \\
&\partial\colon \MC_k^\ell(X)\rightarrow \MC_{k-1}^\ell(X),\ \partial:=\sum_{i=1}^{k-1}(-1)^i\partial_i, \\
&\partial_i(x_0, \cdots, x_k):=
\begin{cases}
(x_0, \cdots, \widehat{x_i}, \cdots, x_k),&
\text{ if } d(x_0, \cdots, \widehat{x_i}, \cdots, x_k)=\ell, \\
0, &\text{ otherwise}.
\end{cases}\\
\end{split}
\end{equation}
Then $(\MC_*^\ell(X), \partial)$ is a chain complex and it is called the magnitude chain complex. The \emph{magnitude homology} of $X$ is defined as the homology of the chain 
complex: $\MH_k^\ell(X):=H_k(\MC_*^\ell(X))$. 
\end{definition}
Let $a, b\in X$. Then we can define magnitude chain complex and magnitude homology 
group using only the sequences from $a$ to $b$, 
\begin{equation}
\MC_k^\ell(X; a, b)=\bigoplus_{\substack{\bm{x}=(x_0, \dots, x_k),\\ x_0=a, x_k=b}}\Z\bm{x}, 
\MH_k^\ell(X; a, b)=H_k(\MC_*^\ell(X; a, b)). 
\end{equation}
Furthermore, we have the following direct sum decomposition. 
\begin{equation}
\MH_k^\ell(X)=\bigoplus_{a, b\in X}\MH_k^\ell(X; a, b). 
\end{equation}

Now again we return to the case where $X$ is a finite metric space, and describe 
the relationship between the magnitude and the magnitude homology. Let $\ell\geq 0$. 
Then there are only finitely many sequences $\bm{x}=(x_0, \dots, x_k)$ with $d(\bm{x})=\ell$. 
Hence the Euler characteristic of magnitude homology can be defined and satisfies 
\begin{equation}
\sum_{k\geq 0}(-1)^k\rank\MH_k^\ell(X; a, b)=\sum_{k\geq 0}(-1)^k\rank\MC_k^\ell(X; a, b). 
\end{equation}
Using this formula, we have the following. 
\begin{equation}
\begin{split}
Z_X^{-1}(a, b)&=\sum_{\ell\geq 0}\left(\sum_{k\geq 0}(-1)^k\rank\MH_k^\ell(X; a, b)\right)q^\ell, \\
w(a)&=\sum_{\ell\geq 0, b\in X}
\left(\sum_{k\geq 0}(-1)^k\rank\MH_k^\ell(X; a, b)\right)q^\ell, \\
\Mag(X)&=\sum_{\ell\geq 0}\left(\sum_{k\geq 0}(-1)^k\rank\MH_k^\ell(X)\right)q^\ell. 
\end{split}
\end{equation}

\subsection{Discrete Morse theory and order complexes of posets}
\label{sec:DMT}

In this subsection we recall discrete Morse theory on simplicial complexes 
(see \cite{K} for details). 
Let $V$ be a nonempty set and $S\subset 2^V$ be 
a simplicial complex, i.e. , a collection of 
nonempty subsets of $V$ such that for any 
$a, b\subset V$,  $a\subset b$ and $b\in S$ imply $a\in S$. 
The geometric realization of $S$ is denoted by $|S|$. 

\begin{definition}[partial matching {\cite[Definition 11.1]{K}}]
A partial matching $M$ is a subset $M\subseteq S\times S$ 
satisfying the followings. 
\begin{itemize}
\item 
If $(b, a)\in M$, then $b\subset a$ and $|a\smallsetminus b|=1$. 
\item 
Each $a\in S$ belongs to at most one element in $M$. 
\end{itemize}
If $(b, a)\in M$, let us denote $b=d(a)$, $a=u(b)$, or $b\vdash a$. 
\end{definition}

\begin{definition}[acyclic matching]
A partial matching $M$ on $S$ is said to be \emph{acyclic} 
if there does not exist a cycle 
\begin{equation}
\label{eq:cycle}
a_1\supset b_1\vdash a_2\supset b_2\vdash\cdots\vdash a_p\supset b_p\vdash a_{p+1}=a_1
\end{equation}
with $p\geq 2$, and $a_i\neq a_j$ for every $i, j\in\{1, 2, \cdots, p\}$ with $i\neq j$.
\end{definition}

\begin{definition}[critical simplex]
Let $M$ be an acyclic matching on $S$. 
A simplex $a\in S$ is called a \emph{critical simplex} if 
$a$ does not belong to any element in $M$.
\end{definition}

\begin{theorem}[\cite{K}, Theorem 11.13 ``Main theorem of Discrete Morse Theory'']
\label{thm:morse}
Let $M$ be an acyclic matching on a finite simplicial complex $S$. 
\begin{itemize}
\item[(a)] 
If the critical cells form a subcomplex $S_c$, 
then there exists a sequence of cellular collapses leading 
from $|S|$ to $|S_c|$, in particular, $|S_c|$ is a deformation retract of $S$. 
\item[(b)] 
Denote the number of critical $i$-dimensional simplices by $c_i$.  
Then, $S$ is homotopy equivalent to a CW complex with $c_i$ cells in dimension $i$. 
\end{itemize}
\end{theorem}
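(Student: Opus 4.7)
The approach is to convert the acyclic matching $M$ into a schedule of elementary simplicial collapses, apply this schedule to deduce (a), and then reduce (b) to (a) by a mapping-cylinder construction. First I would form the modified Hasse diagram $H_M$ on the vertex set $S$: each unmatched covering relation $a\supset b$ contributes a downward edge $a\to b$, while each matched covering pair $(b,a)\in M$ contributes a reversed, upward edge $b\to a$. The acyclicity hypothesis on $M$ is exactly the statement that $H_M$ has no nontrivial directed cycle, so the transitive closure of ``$\to$'' is a partial order $\preceq_M$ on $S$. Since $S$ is finite I can pick a linear extension of $\preceq_M$; reading off the matched pairs in the order in which their upper simplices $a_i$ appear yields a list $(b_1,a_1),\dots,(b_N,a_N)$ that will be the collapse schedule. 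Set $S_0:=S$ and $S_i:=S_{i-1}\smallsetminus\{b_i,a_i\}$.

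The technical heart is to show that, at every stage $i$, the simplex $b_i$ is a free face of $a_i$ in $S_{i-1}$, so that removing the pair $\{b_i,a_i\}$ is a genuine elementary collapse and hence induces a strong deformation retract $|S_{i-1}|\searrow|S_i|$. Suppose toward contradiction that some $c\in S_{i-1}$ covers $b_i$ with $c\neq a_i$. The edge $c\to b_i$ is unmatched (because $b_i$'s partner is $a_i$), so composing with the matched edge $b_i\to a_i$ exhibits a directed $H_M$-path $c\to b_i\to a_i$, that is, $c\preceq_M a_i$. On the other hand $c$ has not been removed yet, so it is either critical or it appears in some later pair $(b_j,a_j)$ with $j>i$; in each case an explicit $V$-path chase through $H_M$ forces $a_i\preceq_M c$, producing a nontrivial $H_M$-cycle through $a_i$ and contradicting acyclicity (or the choice of linear extension). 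So the free-face property holds at every stage, and the schedule collapses $|S|$ down to $|S_N|$ through a finite sequence of deformation retracts.

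For (a), the assumption that the critical simplices form a subcomplex $S_c$ means no critical simplex is ever matched, so none is removed by the schedule and $S_N=S_c$; iterating the $N$ elementary collapses produces a strong deformation retract $|S|\searrow|S_c|$. For (b), when $S_c$ fails to be a subcomplex, I would pass to an auxiliary simplicial complex $S'\supset S$ — a mapping-cylinder construction in which each critical simplex is fattened so that the critical parts assemble into a subcomplex $S_c'$ — extend $M$ to an acyclic matching on $S'$ with the same critical simplices, and invoke (a) to obtain $|S|\simeq|S'|\simeq|S_c'|$, the latter being a CW complex with one $i$-cell per critical $i$-simplex. The main obstacle is the free-face verification of the middle paragraph: all the real work of the theorem lies in turning the combinatorial acyclicity hypothesis into the geometric statement that each scheduled pair is genuinely collapsible, and the case analysis for the offending coface $c$ must handle the critical, later-$a_j$, and later-$b_j$ possibilities uniformly via $H_M$-paths. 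Once that lemma is in hand, (a) and (b) follow by routine iteration and by the mapping-cylinder reduction, respectively.
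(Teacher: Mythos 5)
The paper offers no proof of Theorem \ref{thm:morse}: it is quoted verbatim from Kozlov's book, and the bracketed citation \emph{is} the proof, so I am judging your sketch on its own terms rather than against an in-paper argument. For part (a) your strategy --- convert the acyclic matching into a partial order on $S$, take a linear extension, and remove the matched pairs one at a time after a free-face verification --- is exactly the standard Kozlov/Forman argument, and the skeleton is right. But the pivotal sentence, that ``in each case an explicit $V$-path chase through $H_M$ forces $a_i\preceq_M c$,'' does not survive inspection, and this is precisely where the theorem lives. If the offending coface $c\supset b_i$ is \emph{critical}, no chase through $H_M$ closes a cycle: a critical simplex has no incoming matched (upward) edge, so nothing forces $a_i\preceq_M c$. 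That case is settled only by the hypothesis of (a): $c\in S_c$ together with $S_c$ being a subcomplex would put $b_i$ in $S_c$, contradicting that $b_i$ is matched. If instead $c=b_j$ for a later pair, the path $b_j\to b_i\to a_i$ constrains $b_j$, but your schedule is keyed to the positions of the \emph{upper} simplices $a_j$, and no single edge of $H_M$ relates $a_j$ to $a_i$; you need a separate descent lemma (from $a_j\supsetneq b_i$ one can always walk downward in $H_M$ from $a_j$ to $b_i$ while avoiding $b_j$, hence reach $a_i$), or, more cleanly, order the pairs by the relation ``$a_j\supseteq b_i$'' directly, whose acyclicity follows from a dimension count combined with the acyclicity of $M$. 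You also need the (easy, inductive) remark that each $S_{i-1}$ is still a simplicial complex, so that checking covering cofaces of $b_i$ suffices. As written, the free-face lemma is asserted rather than proved.

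More seriously, your reduction of (b) to (a) by ``fattening the critical simplices into a subcomplex'' cannot work. The CW complex promised by (b) has exactly one $i$-cell per critical $i$-simplex, and such a cell structure is in general not realizable as a simplicial subcomplex of any simplicial complex containing $S$: take $S=\partial\Delta^2$ with the acyclic matching $\{1\}\vdash\{1,2\}$, $\{2\}\vdash\{2,3\}$, whose critical cells are the vertex $\{3\}$ and the edge $\{1,3\}$. Part (b) asserts $|S|\simeq S^1$ with one $0$-cell and one $1$-cell, and no simplicial complex has a single vertex and a single edge, so there is no simplicial $S'\supset S$ with an extended acyclic matching whose critical cells form the required subcomplex $S_c'$. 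The genuine proof of (b) must leave the simplicial category: one builds the CW model by induction along the collapse schedule, using the deformation retractions supplied by the matched pairs to homotope the attaching maps of the critical cells onto the part of the CW complex already constructed. So part (a) is a correct outline with a gap at its key lemma, while part (b) rests on a construction that does not exist.
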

In much of the literature on discrete Morse theory, it is assumed that 
the simplicial complex is finite. 
However, in this paper, we also require a version of 
Theorem \ref{thm:morse} (a) for infinite simplicial complexes. 
In this case, certain finiteness conditions on acyclic matching 
need to be imposed. 

\begin{definition}
\label{def:bdd}
Let $M$ be an acyclic matching on a simplicial complex $S$ (possibly 
infinite simplicial complex). We say $M$ is \emph{bounded} if 
for each simplex $a\in S$, there 
exists positive integer $N(a)>0$ such that for every sequence 
\begin{equation}
\label{eq:collapse}
a=a_1\supset b_1\vdash a_2\supset b_2\vdash\cdots\vdash a_p\supset b_p
\end{equation}
with $a_i\neq a_j, b_i\neq b_j$ for $i\neq j$, 
the length satisfies $p\leq N(a)$. 
\end{definition}

\begin{example}
\label{ex:infinitematching}
Consider the simplicial decomposition of $\R_{\geq 0}$ defined by 
$0$-simplices $\sigma_n=\{n\}$ ($n\in\Z_{\geq 0}$), and 
$1$-simplices $\tau_n=\{n, n+1\}$ ($n\in\Z_{\geq 0}$) as in 
Figure \ref{fig:unbdd}. Define 
acyclic matchings $M_1$ and $M_2$ by 
\[
\begin{split}
M_1&=\{(\sigma_i\vdash\tau_{i-1})\mid i\in\Z_{>0}\}, \\
M_2&=\{(\sigma_i\vdash\tau_{i})\mid i\in\Z_{\geq 0}\}. 
\end{split}
\]
\begin{figure}[htbp]
\centering
\begin{tikzpicture}%[scale=1.2]

%\draw [help lines] (0,0) grid (9,5);%(0,0)から(10,4)までの"細線の方眼"

\draw[thick] (-0.5,1.5) node[left]{$M_1: $};
\filldraw[fill=black, draw=black] (0, 1.5) circle (2pt) node[above]{$\sigma_0$};
\filldraw[fill=black, draw=black] (2, 1.5) circle (2pt) node[above]{$\sigma_1$};
\filldraw[fill=black, draw=black] (4, 1.5) circle (2pt) node[above]{$\sigma_2$};
\filldraw[fill=black, draw=black] (6, 1.5) circle (2pt) node[above]{$\sigma_3$};
\draw (1,1.5) node[above]{$\subset \tau_0 \dashv$};
\draw (3,1.5) node[above]{$\subset \tau_1 \dashv$};
\draw (5,1.5) node[above]{$\subset \tau_2 \dashv$};
\draw[thick] (0,1.5)--(6.5,1.5) node[right]{$\cdots$}; 

\draw[thick] (-0.5,0) node[left]{$M_2: $};
\filldraw[fill=black, draw=black] (0, 0) circle (2pt) node[above]{$\sigma_0$};
\filldraw[fill=black, draw=black] (2, 0) circle (2pt) node[above]{$\sigma_1$};
\filldraw[fill=black, draw=black] (4, 0) circle (2pt) node[above]{$\sigma_2$};
\filldraw[fill=black, draw=black] (6, 0) circle (2pt) node[above]{$\sigma_3$};
\draw (1,0) node[above]{$\vdash \tau_0 \supset$};
\draw (3,0) node[above]{$\vdash \tau_1 \supset$};
\draw (5,0) node[above]{$\vdash \tau_2 \supset$};
\draw[thick] (0,0)--(6.5,0) node[right]{$\cdots$}; 

\end{tikzpicture}
\caption{Bouded and unbounded acyclic matchings.}
\label{fig:unbdd}
\end{figure}
Then $M_1$ is a bounded acyclic matching which has the unique critical 
simplex $\sigma_0$. $M_2$ is not a bounded acyclic matching 
which has no critical simplices. 
\end{example}

\begin{lemma}
\label{lem:finite}
Let $M$ be a bounded acyclic matching on a simplicial complex $S$. 
Then for every finite subcomplex $F\subset S$, there exists finite simplicial 
complex $\widetilde{F}$ such that 
\begin{itemize}
\item $F\subset\widetilde{F}\subset S$, 
\item if $b\in\widetilde{F}$ and $b\vdash a'\in S$, then $a'\in\widetilde{F}$. 
\end{itemize}
\end{lemma}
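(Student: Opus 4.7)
The plan is to construct $\widetilde{F}$ by iterated saturation of $F$ under the two operations (take a face) and (apply $u$), and then establish finiteness by strong induction on dimension, using the boundedness of $M$ to control behavior at each fixed dimension. Concretely, set $F_0 = F$ and, inductively, let $F_{n+1}$ be the smallest subcomplex of $S$ containing $F_n \cup \{u(b) : b \in F_n,\ u(b)\text{ defined}\}$. Then $\widetilde{F} := \bigcup_n F_n$ is manifestly a subcomplex satisfying the required closure condition, so the real content of the lemma is finiteness.

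For each simplex $b \in S$ let $R(b) \subseteq S$ denote the smallest subcomplex containing $b$ that is closed under $u$. Because $F$ is finite, $\widetilde{F} = \bigcup_{b \in F} R(b)$, so it suffices to prove that each $R(b)$ is finite. Since a simplex belongs to at most one matching pair, $R(b) = R(u(b))$ whenever $b$ is a lower element of $M$, so I may reduce to the case where $b$ is either critical or upper.

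The heart of the argument is strong induction on $d = \dim a$ for such $a$; the base case $d = 0$ is immediate since a vertex cannot be upper. For the inductive step, the key observation is a dimension-monotonicity claim: every upper-or-critical simplex of dimension exactly $d$ in $R(a)$ is reachable from $a$ only via an alternating chain $a = a_1 \supset b_1 \vdash a_2 \supset b_2 \vdash \cdots \vdash a_k \supset b_k \vdash a_{k+1}$ in which each $b_i$ is a codim-$1$ lower face of $a_i$, because passing to a subface of codimension $\geq 2$ and then applying $u$ lands strictly below dimension $d$, and a straightforward induction shows that from dimension $< d$ one cannot climb back up to $d$. Hence the set $U(a)$ of dimension-$d$ upper-or-critical elements of $R(a)$ is parametrised by the nodes of a rooted tree whose branching at each node is at most $d + 1$ (the number of codim-$1$ faces) and whose depth is at most $N(a)$; the boundedness hypothesis therefore forces this tree to be finite, and hence $U(a)$ is finite.

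To complete the induction, $R(a)$ decomposes as $U(a)$ together with the (finitely many) subfaces of elements of $U(a)$, all of dimension $< d$, and, for each such subface $c$ that is lower with $\dim u(c) < d$, the set $R(u(c))$. Each such $u(c)$ is upper of dimension $< d$, so the induction hypothesis supplies $R(u(c))$ finite, and $R(a)$ is exhibited as a finite union of finite sets. The main obstacle is exactly the dimension-monotonicity claim that lets one embed $U(a)$ into a finite-branching tree of bounded depth—without the boundedness of $M$ this tree may be infinite, as the matching $M_2$ in Example \ref{ex:infinitematching} demonstrates, and the desired $\widetilde{F}$ can fail to be finite.
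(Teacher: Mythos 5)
Your argument is correct, but it takes a noticeably longer route than the paper's, which is a two-line direct argument. The paper simply declares $\widetilde{F}$ to be the set of all simplices $a_i,b_i$ occurring in alternating sequences $a_1\supset b_1\vdash a_2\supset\cdots\vdash a_p\supset b_p$ with $a_1\in F$ (with $b_i$ an arbitrary proper face of $a_i$, so that $\widetilde{F}$ is automatically face-closed and $u$-closed): boundedness caps the length of every such sequence at $N(a_1)$, each $a_i$ has finitely many faces, and each $b_i$ determines at most one $a_{i+1}=u(b_i)$, so the tree of sequences is finitely branching of bounded depth and hence finite. That is exactly the bounded-depth, finite-branching argument you deploy, but applied once globally instead of level-by-level: your saturation $\bigcup_n F_n$, the reduction to $R(a)$ for $a$ upper or critical, the dimension-monotonicity claim, and the recursive decomposition of $R(a)$ into $U(a)$, its faces, and the sets $R(u(c))$ with $\dim u(c)<d$ are all valid (I checked that the decomposition is genuinely $u$-closed and that $R(b)=R(u(b))$ for lower $b$), but the induction on dimension is unnecessary because a single alternating path with descents of arbitrary codimension already reaches every simplex of the saturation, and Definition \ref{def:bdd} bounds precisely such paths. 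One point you should make explicit (the paper also glosses over it): $N(a)$ bounds only sequences with pairwise distinct $a_i$'s and $b_i$'s, so to bound the depth of your tree you should restrict to non-backtracking chains ($a_{i+1}\neq a_i$) and use acyclicity to see that these automatically have distinct entries — a repetition $a_i=a_j$ with $j>i+1$ would yield a forbidden cycle, and $b_i=b_j$ would force $a_{i+1}=a_{j+1}$.
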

\begin{proof}
By the boundedness of $M$, there exist only finitely many sequences 
$a_1\supset b_1\vdash a_2\supset b_2\vdash\cdots\vdash a_p\supset b_p$ 
with $a_1\in F$ and $a_i, b_i\in S$. Let $\widetilde{F}$ denote the 
collection of all such $a_i, b_i\in S$, which satisfies the required conditions. 
\end{proof}

\begin{proposition}
\label{prop:infinite}
Let $M$ be a bounded acyclic matching on a connected simplicial complex $S$. 
If the critical cells form a subcomplex $S_c$, then $|S_c|$ is a deformation retract of $|S|$. 
\end{proposition}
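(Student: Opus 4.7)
The plan is to reduce the infinite case to the finite Main Theorem (Theorem \ref{thm:morse}(a)) via an exhaustion by ``saturated'' finite subcomplexes. Call a subcomplex $F \subset S$ \emph{saturated} (for $M$) if $b \in F$ together with $b \vdash a$ in $M$ implies $a \in F$. By Lemma \ref{lem:finite}, every finite subcomplex of $S$ is contained in a saturated finite subcomplex, so the collection $\mathcal{A}$ of saturated finite subcomplexes is cofinal among finite subcomplexes and directed under inclusion; since $|S|$ carries the weak topology with respect to its finite subcomplexes, we have $|S| = \operatorname{colim}_{F \in \mathcal{A}} |F|$ as a topological space.

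First I would verify that for each $F \in \mathcal{A}$ the restriction $M|_F := \{(b, a) \in M \mid a, b \in F\}$ is an acyclic matching on $F$ whose set of critical simplices equals $F \cap S_c$. Acyclicity is inherited from $M$. If $\sigma \in F$ is matched in $M$ with some $\tau$, then $\tau \in F$ as well: when $\tau \vdash \sigma$ this is because $F$ is a simplicial subcomplex, and when $\sigma \vdash \tau$ it is by saturation. Hence $\sigma$ is matched in $M|_F$ if and only if it is matched in $M$, so the critical cells of $M|_F$ form the subcomplex $F \cap S_c$. Theorem \ref{thm:morse}(a) then realizes $|F \cap S_c|$ as a deformation retract of $|F|$ via an explicit sequence of elementary collapses performed in an order compatible with the partial order underlying the acyclic matching.

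The key step is to choose these finite retractions \emph{compatibly} across $\mathcal{A}$. The plan is to fix once and for all a single linear extension $\prec$ of the global acyclic matching $M$ and, on each $F \in \mathcal{A}$, to use the induced linear extension on $M|_F$. The crucial observation is that for any $F \in \mathcal{A}$ no matched pair $(b, a) \in M$ can straddle $F$: the case $a \in F$, $b \notin F$ is excluded because $F$ is a subcomplex (so $b \subset a$ forces $b \in F$), and the case $b \in F$, $a \notin F$ is excluded by saturation. Thus if $F \subset F'$ are both in $\mathcal{A}$, each matched pair of $M|_{F'}$ either lies in $M|_F$ or has both members in $F' \setminus F$. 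Since the elementary collapse at $(b, a)$ only moves points of the open simplices $\operatorname{int}|a| \cup \operatorname{int}|b|$ and leaves the rest of $|a|$ pointwise fixed, the collapses along pairs in $M|_{F'} \setminus M|_F$ fix $|F|$ pointwise. Consequently the finite deformation retraction $r_{F'}\colon |F'| \times [0,1] \to |F'|$ restricts on $|F| \times [0,1]$ to $r_F$.

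Finally, the compatible retractions assemble into a retraction $r\colon |S| \to |S_c|$ and a homotopy $H\colon |S| \times [0,1] \to |S|$, continuous by the colimit description of $|S|$. The main technical obstacle I anticipate is the combinatorial bookkeeping needed to make the ``pick a linear extension, then collapse in order'' recipe literally agree between $F$ and $F'$; boundedness of $M$ is used here through Lemma \ref{lem:finite} to guarantee that the exhaustion by saturated finite subcomplexes exists, so that every step of the argument takes place in a finite simplicial complex. Connectedness of $S$ plays essentially no role beyond ensuring that $|S|$ and $|S_c|$ are each connected topological spaces.
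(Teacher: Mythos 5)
Your reduction to finite complexes --- exhausting $S$ by saturated finite subcomplexes via Lemma \ref{lem:finite}, restricting $M$ to each such $F$, and checking that no matched pair straddles $F$ so that the critical cells of $M|_F$ are exactly $F\cap S_c$ --- coincides with the first half of the paper's argument. Where you diverge is in how the finite statements are assembled. The paper does \emph{not} glue the finite deformation retractions: it applies Whitehead's theorem to the inclusion $i\colon |S_c|\hookrightarrow |S|$. Any representative of a class in $\pi_n(|S|)$ (and any nullhomotopy witnessing injectivity) has compact image, hence lands in some finite subcomplex, hence in a saturated $\widetilde F$, where Theorem \ref{thm:morse}(a) applies; this shows $i_*$ is an isomorphism on all homotopy groups, and since $(|S|,|S_c|)$ is a CW pair the inclusion is then a deformation retraction. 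Connectedness of $S$ is used there to run Whitehead with a single basepoint. That route requires no compatibility at all between the retractions of different $F$'s.

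The gap in your route is exactly the compatibility you flag, and it is a genuine one. The assertion that ``$r_{F'}$ restricts on $|F|\times[0,1]$ to $r_F$'' does not follow from the observations preceding it. What they give is that every collapse of $F'$ lying outside $F$ fixes $|F|$ pointwise and that the collapses inside $F$ occur in the same relative order; hence the time-one maps agree on $|F|$ and the two homotopies traverse the same tracks. But as maps on $|F|\times[0,1]$ they differ: interleaving the extra collapses of $F'\smallsetminus F$ reparametrizes the time intervals during which the collapses of $M|_F$ act, so $r_{F'}|_{|F|\times[0,1]}$ is only a reparametrization of $r_F$ and the system is not literally compatible over the colimit. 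This can be repaired (e.g.\ use boundedness to assign each matched pair a finite ``depth'' and schedule all pairs of a given depth on a time interval independent of $F$, checking that simultaneous collapses at incomparable pairs are consistent), but that is real additional work. Similarly, the existence of a single global linear extension of the ``must-precede'' relation on matched pairs that induces a valid collapse order on \emph{every} saturated finite $F$ is asserted rather than proved; it does follow from acyclicity together with the non-straddling property, but the argument belongs in the proof. Both points are fixable, yet as written the gluing step is incomplete --- and the paper's Whitehead argument sidesteps it entirely at the modest cost of using the connectedness hypothesis.
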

\begin{proof}
Let $X=|S_c|$, $Y=|S|$, and $i: X\longrightarrow Y$ be the inclusion. 
Let $x_0\in X$. 
By Whitehead's theorem \cite[Theorem 4.5]{hat}, 
it is sufficient to show that 
$X$ is connected and the induced maps between homotopy groups 
$i_*:\pi_n(X, x_0)\longrightarrow\pi_n(Y, x_0)$ are 
isomorphic for all $n\geq 1$. Let $\gamma:S^n\longrightarrow Y$ 
be a representative 
of an element of $\pi_n(Y, x_0)$. Then since $S^n$ is compact, 
the image $i(S^n)$ is contained in a finite subcomplex 
$F$ of $S$ (\cite[Proposition A. 1]{hat}).  We can choose a finite subcomplex 
$\widetilde{F}$ as in Lemma \ref{lem:finite}. 
Then $\widetilde{M}:=M\cap(\widetilde{F}\times\widetilde{F})$ defines an 
acyclic matching on $\widetilde{F}$ whose set of critical simplices is 
$\widetilde{F}\cap S_c$. By Theorem \ref{thm:morse} (a), $|S_c\cap \widetilde{F}|$ 
is a deformation retract of $|\widetilde{F}|$. Therefore, $[\gamma]\in\pi_n(Y, x_0)$ is contained 
in the image of $i_*:\pi_n(X, x_0)\longrightarrow\pi_n(Y, x_0)$. 
Hence $i_*$ is surjective. 
The injectivity of $i_*$ and the connectivity of $X$ can be similarly proved. 
\end{proof}

\begin{example}
Let us consider acyclic matchings in Example \ref{ex:infinitematching}. The matching $M_1$ has 
the unique critical cell $\{\sigma_0\}$. While there are no critical cells in $M_2$. 
\end{example}

\begin{definition}
Let $P$ be a poset. The \emph{order complex} $\Delta P$ of $P$ is 
defined as 
\[
\Delta P=\{\{x_0, x_1, \dots, x_n\}\mid n\in\Z_{\geq 0}, x_i\in P, x_0<x_1<\cdots < x_n\}. 
\]
\end{definition}

\begin{remark}[On the empty simplicial complex and the void, used in \S \ref{sec:var}]
\label{rem:void}
Let $S$ be a simplicial complex on the vertex set $V$. In this paper, 
the emptyset $\emptyset\subset V$ is considered as a simplex of dimension $-1$. 
The empty simplex 
$\emptyset$ is contained in every simplex. The simplicial complex consisting of only the 
empty simplex is called the \emph{empty simplicial complex} and denoted by $\emptycpx$. 
The empty simplicial complex is a subcomplex of any simplicial complex (other than the void). 
The simplicial complex that has no simplices is called the \emph{void} and denoted by $\void$. 

For a given simplicial complex $X$, we can associate a chain complex $C_*(X): \cdots\to
C_1(X)\to C_0(X)\to C_{-1}(X)\to 0$, where $C_{-1}(X)\cong\Z$ is generated by the 
empty simplex $\emptyset$. It is nothing but the reduced chain complex in the usual sense. 
Note that $C_*(\emptycpx): \cdots\to 0\to C_{-1}(\emptycpx)=\Z\to 0\to\cdots$ 
is the chain complex supported on the degree $-1$ and $C_*(\void)=0$. 

A pair $A\subset X$ of simplicial complexes determines the chain complex of the pair 
$C_*(X, A):=C_*(X)/C_*(A)$.  Then $C_*(X, \void)=C_*(X)$ and $C_*(X, \emptycpx)$ is 
isomorphic to the usual chain complex (without empty simplex) associated with $X$. 

Recall the convention $X/\emptyset=X\sqcup\{*_0\}$, where $*_0$ is the base 
point in the category of pointed topological spaces. 
We can regard this emptyset 
as the empty simplicial complex $\emptycpx$. 
Indeed, since the chain complex 
\[
C_*(*_0): \cdots\to 0\to\Z\stackrel{1}{\longrightarrow}\Z\to0\to\cdots
\]
is homotopy equivalent to the zero chain complex, 
$C_*(X\sqcup\{*_0\})$ is homotopy 
equivalent to $C_*(X, \emptycpx)$. 

Recall that the reduced suspension $\Sus(X)$ of a pointed  space 
$X$ is defined as 
the smash product $S^1\wedge X$ with $S^1$. 
The suspension of the smash product 
of two CW complexes is known to be homotopy equivalent to 
the (unreduced) join of them 
(see \cite[Proof of Proposition 4I.1]{hat}), 
\begin{equation}
\label{eq:joinsusp}
X*Y\simeq\Sus(X\wedge Y), 
\end{equation}
which will be used in \S \ref{sec:frame}. 

Let us denote by 
\begin{equation}
\label{eq:equcone}
\Cone_\alpha(X)=\{\alpha\}*X
\end{equation}
the cone of $X$ with apex $\alpha$. 
The homotopy equivalences 
\begin{equation}
\label{eq:susp}
\begin{split}
(\{\alpha\}*X)/X&\simeq\Sus(X)\\
(X/X')*Y&\simeq (X*Y)/(X'*Y), 
\end{split}
\end{equation}
(where $X$ and $Y$ are CW complexes and $X'$ is a 
subcomplex of $X$) 
will also be frequently used in \S \ref{sec:var} and \S \ref{sec:frame}. 

In \S \ref{sec:var}, we will need the notion of (reduced) 
suspension $\Sus(X, A)$ 
of the pair of spaces. 
At the level of pairs of spaces, 
the suspension is just the product with the pair 
$(S^1, *)$ (or with $([0, 1], \{0, 1\})$). 
However, for the purpose of dealing with $\void$ and $\emptycpx$, 
the following definition is suitable (Figure \ref{fig:cones}). 
\begin{equation}
\label{eq:cone}
\Sus(X, A):=(\Cone_\alpha(X), \Cone_\alpha(A)\cup X). 
\end{equation}
Note that $\Cone_\alpha(\void)=\void$. Hence, for a pointed 
CW complex $X$, we have 
\begin{equation}
\label{eq:pairXvoid}
\Sus(X, \void):=(\Cone_\alpha(X), X),  
\end{equation}
with the quotient $\Cone_\alpha(X)/X$ homotopy equivalent to 
$\Sus(X)$. 
Since 
$\Cone_\alpha(\emptycpx)=\{\alpha\}$, 
we also note that 
\begin{equation}
\label{eq:emptyvoid}
\Sus(\emptycpx, \void):=(\{\alpha\}, \emptycpx). 
\end{equation}
Thus the corresponding chain complexes are as follows. 
\[
\begin{split}
C_*(\emptycpx, \void)&=\cdots\to 0\to C_{-1}=\Z\to 0\to\cdots, \\
C_*\left(\Sus(\emptycpx, \void)\right)&=C_*(\{\alpha\}, \emptycpx)\\
&=\cdots\to 0\to C_{0}=\Z\to 0\to\cdots. 
\end{split}
\]
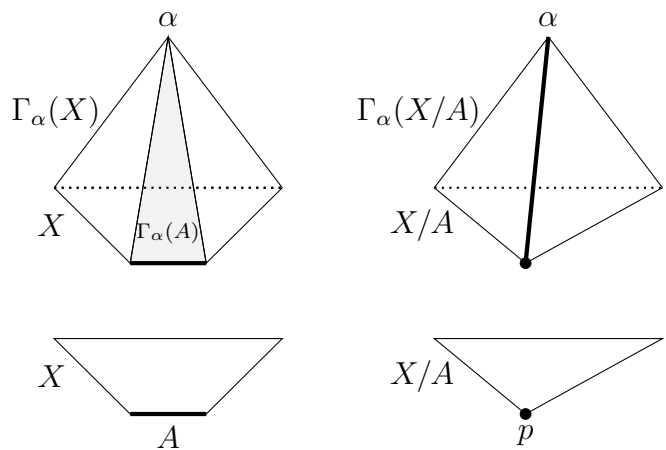
\begin{figure}[htbp]
\centering
\begin{tikzpicture}%[scale=1.2]

%\draw [help lines] (0,0) grid (9,5);%(0)から(10,4)までの"細線の方眼"

\draw (0,1)-- node[left=0.2cm] {$X$} (1,0)--(2,0)--(3,1)--cycle; 
\draw (0,3)-- node[left=0.2cm] {$X$} (1,2)--(2,2)--(3,3); 
\draw[thick, dotted] (0,3)--(3,3); 
\draw[ultra thick] (1,0)-- node[below]{$A$}  (2,0); 
\filldraw[fill=gray!10!white, draw=black] (1,2)--(2,2)--(1.5,5)--cycle; 
\draw[ultra thick] (1,2)-- node[above=0.1cm] {\scriptsize $\Cone_\alpha(A)$} (2,2); 
\draw[thick, dotted] (0,3)--(3,3); 

\draw (0,3)-- node[left] {$\Cone_\alpha(X)$} (1.5,5) node[above] {$\alpha$};
\draw (1,2)--(1.5,5);
\draw (2,2)--(1.5,5);
\draw (3,3)--(1.5,5);

\draw (5,1)-- node[left=0.2cm] {$X/A$} (6.2,0)--(8,1)--cycle; 
\filldraw[fill=black, draw=black] (6.2, 0) node[below] {$p$} circle (2pt); ; 
\draw (5,3)-- node[left=0.2cm] {$X/A$} (6.2,2)--(8,3); 
\draw[thick, dotted] (5,3)--(8,3); 

\draw (5,3)-- node[left] {$\Cone_\alpha(X/A)$} (6.5,5) node[above] {$\alpha$};
\filldraw[fill=black, draw=black] (6.2, 2) circle (2pt); ; 
\draw[ultra thick] (6.2,2)--(6.5,5);
\draw (8,3)--(6.5,5);
\end{tikzpicture}
\caption{Quotient, cone, and reduced suspension}
\label{fig:cones}
\end{figure}
%}
\end{remark}

\section{Causal order on metric spaces}
\label{sec:causal}

Recall that two points in the Minkowski spacetime 
$(x, t), (x', t')\in\R^{n,1}=\R^n\times\R$, with $t<t'$, 
are said to be 
\begin{itemize}
\item 
\emph{time-like} if $|x'-x|< t'-t$, 
\item 
\emph{light-like} if $|x'-x|= t'-t$, 
\item 
\emph{space-like} if $|x'-x|> t'-t$, 
\end{itemize}
(Figure \ref{fig:timelightspace}) 
where $|x-x'|$ is the Euclidean metric in $\R^n$ (\cite{sta}). 

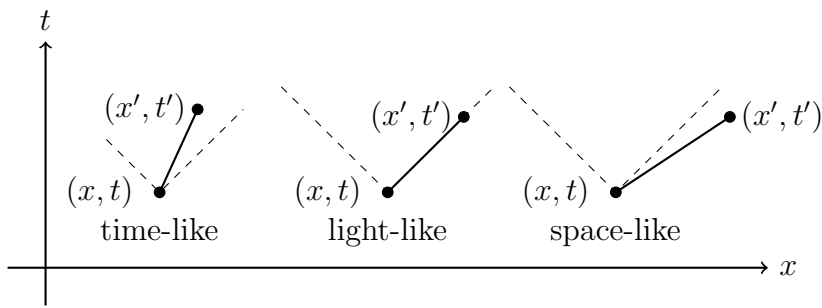
\begin{figure}[htbp]
\centering
\begin{tikzpicture}%[scale=1.2]

%\draw [help lines] (0,0) grid (9,5);%(0,0)から(10,4)までの"細線の方眼"

\filldraw[fill=black, draw=black] (1,0) 
node[left=0.2cm]{$(x, t)$} node[below=0.2cm]{time-like} circle (2pt) ;
\draw[dashed] (0.3, 0.7)--(1,0)--(2.1,1.1);
\draw[thick] (1,0)--(1.5,1.1) node[left] {$(x', t')$};
\filldraw[fill=black, draw=black] (1.5,1.1) circle (2pt) ;

\filldraw[fill=black, draw=black] (4,0) 
node[left=0.2cm]{$(x, t)$} node[below=0.2cm]{light-like} circle (2pt) ;
\draw[dashed] (2.6, 1.4)--(4,0)--(5.4,1.4);
\filldraw[fill=black, draw=black] (5,1) node[left]{$(x', t')$} circle (2pt) ;
\draw[thick] (4,0)--(5,1);

\filldraw[fill=black, draw=black] (7,0) 
node[left=0.2cm]{$(x, t)$} node[below=0.2cm]{space-like} circle (2pt) ;
\draw[dashed] (5.6, 1.4)--(7,0)--(8.4,1.4);
\filldraw[fill=black, draw=black] (8.5,1) node[right]{$(x', t')$} circle (2pt) ;
\draw[thick] (7,0)--(8.5,1);

\draw[->, thick] (-1,-1)--(9,-1) node[right]{$x$}; 
\draw[->, thick] (-0.5,-1.5)--(-0.5,2) node[above]{$t$}; 

\end{tikzpicture}
\caption{Time-like-, light-like, space-like-relations}
\label{fig:timelightspace}
\end{figure}

Let $X$ be a metric space. We consider the similar structure on 
the spacetime $X\times\R$ and define a partial order using time-like and 
light-like relations. 
\begin{definition}
The \emph{causal order} (or \emph{time-light-like order}) on 
$X\times \R$ is defined by 
\begin{equation}
\label{eq:causal}
(x, t)\leq (x', t'):\Longleftrightarrow d(x, x')\leq t'-t. 
\end{equation}
\end{definition}

The relation (\ref{eq:causal}) is a metric space analogue of 
the time- or light-like relation. 
Intuitively, 
$(x, t)\leq (x', t')$ if and only if a signal (traveling at most the speed of light $c=1$)
can reach from $(x, t)$ to $(x', t')$. 

Let $a, b\in X$ and $\ell\geq 0$. We denote the causal interval 
$\{(x, t)\mid (a, 0)\leq (x, t)\leq (b, \ell)\}$ 
between 
$(a, 0)$ and $(b, \ell)\in X\times\R$ by $\Cau^\ell(X; a, b)$. More 
precisely, 
\begin{equation}
\Cau^\ell(X; a, b):=
\{(x, t)\in X\times[0, \ell]\mid 
d(a, x)\leq t \mbox{ and }
d(x, b)\leq \ell-t
\}. 
\end{equation}
We call $\Cau^\ell(X; a, b)$ the \emph{causal interval  
between $(a, 0)$ and $(b, \ell)$}, or simply, \emph{causal poset} 
(Figure \ref{fig:causal}). 
We also define the total causal poset as 
\begin{equation}
\Cau^\ell(X)=\bigsqcup_{a, b\in X}\Cau^\ell(X; a, b). 
\end{equation}

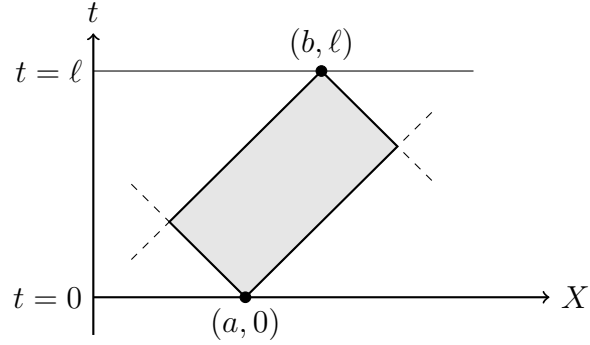
\begin{figure}[htbp]
\centering
\begin{tikzpicture}%[scale=1.2]

%\draw [help lines] (0,0) grid (9,5);%(0,0)から(10,4)までの"細線の方眼"

\draw[->, thick] (0,0)--(6,0) node[right]{$X$}; 
\draw[->, thick] (0,-0.5)--(0,3.5) node[above]{$t$}; 

\filldraw[fill=gray!20!white, draw=black, thick] 
(2,0)--(1,1)--(3,3)--(4,2)--cycle;
\draw (2,0) node[below]{$(a, 0)$}; 
\draw (3,3) node[above]{$(b, \ell)$}; 
\draw[thin] (0,0) node[left]{$t=0$}; 
\draw[thin] (0,3) node[left]{$t=\ell$} --(5,3); 

\filldraw[fill=black, draw=black] (2,0) circle (2pt) ;
\filldraw[fill=black, draw=black] (3,3) circle (2pt) ;

\draw[dashed] (0.5, 1.5)--(2,0)--(4.5,2.5);
\draw[dashed] (0.5, 0.5)--(3,3)--(4.5,1.5);

\end{tikzpicture}
\caption{Causal interval $\Cau^\ell(X; a, b)$}
\label{fig:causal}
\end{figure}

We immediately have the following. 
\begin{proposition}
\begin{itemize}
\item[(1)] 
The causal poset $\Cau^\ell(X; a, b)$ is nonempty if and only if $d(a, b)\leq\ell$. 
\item[(2)] 
If $\Cau^\ell(X; a, b)\neq\emptyset$, the causal poset has the minimum 
$\min\Cau^\ell(X; a, b)=(a, 0)$ and the maximum $\max\Cau^\ell(X; a, b)=(b, \ell)$. 
\end{itemize}
\end{proposition}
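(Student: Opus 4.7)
The proof proposal is essentially a direct unpacking of definitions, combined with one application of the triangle inequality. I lay out the steps I would follow.

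For part (1), the plan is to handle the two implications separately. For the forward direction, I would suppose $(x,t) \in \Cau^\ell(X;a,b)$; by definition this gives $d(a,x) \leq t$ and $d(x,b) \leq \ell - t$, and then the triangle inequality yields
\begin{equation*}
d(a,b) \leq d(a,x) + d(x,b) \leq t + (\ell - t) = \ell.
\end{equation*}
For the converse, assuming $d(a,b) \leq \ell$, I would exhibit the explicit point $(a,0)$: it lies in $X \times [0,\ell]$, and the inequalities $d(a,a) = 0 \leq 0$ and $d(a,b) \leq \ell - 0$ are immediate. Hence $\Cau^\ell(X;a,b)$ is nonempty.

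For part (2), assuming nonemptiness, hence $d(a,b) \leq \ell$ by part (1), I would first verify that $(a,0)$ and $(b,\ell)$ are both elements of the causal interval. The membership of $(a,0)$ was already shown above; by symmetry, $d(a,b) \leq \ell$ and $d(b,b) = 0 \leq 0$ give $(b,\ell) \in \Cau^\ell(X;a,b)$. Then for an arbitrary $(x,t) \in \Cau^\ell(X;a,b)$, I would check $(a,0) \leq (x,t) \leq (b,\ell)$ directly from the definition of the causal order: $d(a,x) \leq t - 0$ and $d(x,b) \leq \ell - t$ are exactly the two defining inequalities for $(x,t)$ to lie in the causal interval.

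There is no real obstacle here; the statement is essentially tautological once one reads off the defining inequalities of $\Cau^\ell(X;a,b)$ and invokes the triangle inequality in the one place it is needed. The only subtle point worth mentioning explicitly is that the triangle inequality is what ties the two defining conditions $d(a,x) \leq t$ and $d(x,b) \leq \ell - t$ together to force $d(a,b) \leq \ell$; this is the hinge of both directions of part (1).
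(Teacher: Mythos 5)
Your proof is correct and is exactly the routine verification the paper has in mind — indeed, the paper states this proposition with no proof at all ("We immediately have the following"), and your unpacking of the definitions plus the single use of the triangle inequality is the intended argument.
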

%\begin{proof}
%Easy. 
%\end{proof}

\begin{example}
\label{ex:ellipse}
Let $X=\R^2$ be the Euclidean plane. Let $a, b\in X$ and $\ell\in\R_{>0}$ with 
$d(a, b)<\ell$. Then 
\[
\Cau^\ell(\R^2; a, b)=\{(x, t)\in\R^2\times[0, \ell] 
\mid d(a, x)\leq t \mbox{ and }d(x, b)\leq \ell-t\}. 
\]
The projection of the causal interval to $\R^2$ is 
\[
\{x\in\R^2\mid d(a, x)+d(x, b)\leq \ell\}
\]
which is an ellipse (Figure \ref{fig:ellipse}). 
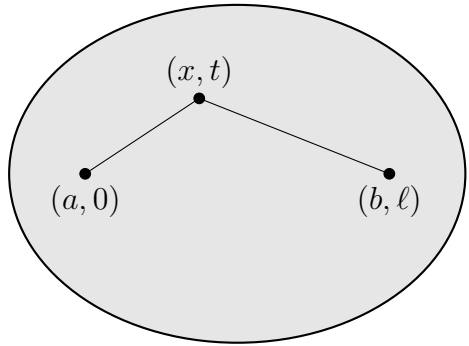
\begin{figure}[htbp]
\centering
\begin{tikzpicture}%[scale=1.2]

%\draw [help lines] (0,0) grid (9,5);%(0,0)から(10,4)までの"細線の方眼"

\filldraw[fill=gray!20!white, draw=black, thick] (0,0) ellipse (3 and 2.24); 

\filldraw[fill=black, draw=black] (2,0) circle (2pt) node[below]{$(b, \ell)$};
\filldraw[fill=black, draw=black] (-2,0) circle (2pt) node[below]{$(a, 0)$};
\draw[black] (-2,0)--(-0.5,1)--(2,0);
\filldraw[fill=black, draw=black] (-0.5,1) circle (2pt) node[above]{$(x, t)$};

\end{tikzpicture}
\caption{The projection of $\Cau^\ell(\R^2; a, b)$ (Example \ref{ex:ellipse}).}
\label{fig:ellipse}
\end{figure}
\end{example}

\begin{example}
\label{ex:2pts}
Even if $X$ is a finite metric space, $\Cau^\ell(X; a, b)$ may not be a finite poset. 
Let $X=\{a, b\}$ be a metric space consisting of two points with distance $d(a, b)=1$. 
Then 
\begin{equation}
\Cau^\ell(X; a, b)=
\begin{cases}
\emptyset, &\mbox{ if }\ell<1, \\
\{(a, 0), (b, 1)\}, &\mbox{ if }\ell=1, \\
\{(a, t)\mid 0\leq t\leq \ell-1\}\cup\{(b, t')\mid , 1\leq t'\leq \ell\}, &\mbox{ if }\ell>1. 
\end{cases}
\end{equation}
See also Example \ref{ex:Cau} for more examples. 
\end{example}

\section{Magnitude homotopy type}
\label{sec:mht}

\subsection{Construction}
\label{sec:const}

Let $X$ be a metric space, $a, b\in X$ and $\ell\geq 0$. In this section, 
we consider the order complex $\Delta\Cau^\ell(X; a, b)$ of the 
causal poset $\Cau^\ell(X; a, b)$. By definition, $\Delta\Cau^\ell(X; a, b)$ 
is the simplicial complex consisting of sequences 
with time parameters 
\begin{equation}
\label{eq:TLseq}
((x_0, t_0), (x_1, t_1), \dots, (x_n, t_n)), 
\end{equation}
of elements in $X\times [0, \ell]$ satisfying 
\begin{equation}
\begin{split}
\label{eq:ineq}
d(a, x_0)&\leq t_0, \\
d(x_{i-1}, x_i)&\leq t_i-t_{i-1}\ (i=1, \dots, n), \\
d(x_n, b)&\leq\ell-t_n. 
\end{split}
\end{equation}
We call such a sequence \emph{causal sequence}. 
Causal sequences of the following type will play an important role. 

\begin{definition}
If the causal sequence (\ref{eq:TLseq}) satisfies 
$t_0=0, t_n=\ell$ and $d(x_{i-1}, x_i)=t_i-t_{i-1}$ for all $1\leq i\leq n$, 
then it is called a \emph{light-like sequence} of length $\ell$ from point $a$ to $b$.   
(Figure \ref{fig:sequences}). 
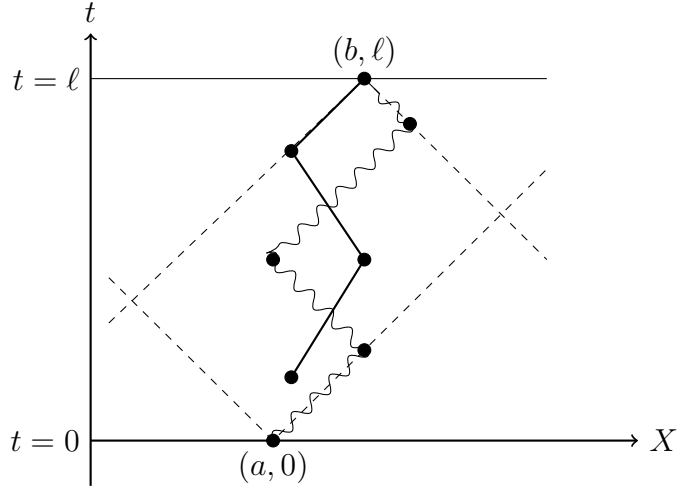
\begin{figure}[htbp]
\centering
\begin{tikzpicture}[scale=1.2]

%\draw [help lines] (0,0) grid (9,5);%(0,0)から(10,4)までの"細線の方眼"

\draw[->, thick] (0,0)--(6,0) node[right]{$X$}; 
\draw[->, thick] (0,-0.5)--(0,4.5) node[above]{$t$}; 

%\filldraw[fill=gray!20!white, draw=black, thick] 
%(2,0)--(1,1)--(3,3)--(4,2)--cycle;
\draw (2,0) node[below]{$(a, 0)$}; 
\draw (3,4) node[above]{$(b, \ell)$}; 
\draw[thin] (0,4) node[left]{$t=\ell$} --(5,4); 
\draw[thin] (0,0) node[left]{$t=0$}; 

\filldraw[fill=black, draw=black] (2,0) circle (2pt) ;
\filldraw[fill=black, draw=black] (3,4) circle (2pt) ;

\draw[dashed] (0.2, 1.8)--(2,0)--(5,3);
\draw[dashed] (0.2, 1.3)--(3,4)--(5,2);

\draw[decorate,decoration={coil,aspect=0}] (2,0)  -- (3,1) -- (2,2) -- (3.5,3.5) -- (3,4);
\draw[thick] (2.2, 0.7)--(3,2)--(2.2,3.2)--(3,4);

\filldraw[fill=black, draw=black] (3,1) circle (2pt) ;
\filldraw[fill=black, draw=black] (2,2) circle (2pt) ;
\filldraw[fill=black, draw=black] (3.5,3.5) circle (2pt) ;
\filldraw[fill=black, draw=black] (2.2,0.7) circle (2pt) ;
\filldraw[fill=black, draw=black] (3,2) circle (2pt) ;
\filldraw[fill=black, draw=black] (2.2,3.2) circle (2pt) ;

\end{tikzpicture}
\caption{A general causal sequence (straight segments) and a light-like one (wavy segments)}
\label{fig:sequences}
\end{figure}
\end{definition}

\begin{proposition}
\label{prop:can}
A causal sequence (\ref{eq:TLseq}) is light-like if and only if 
$d(x_0, \dots, x_n)=\ell$. 
\end{proposition}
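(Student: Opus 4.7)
The plan is to obtain the equivalence by bounding the length $d(\bm{x})=\sum_{i=1}^n d(x_{i-1},x_i)$ from above by $\ell$ using the causal inequalities, and then identifying when every bound is saturated.

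First I would combine the causal inequalities $d(x_{i-1},x_i)\leq t_i-t_{i-1}$ across $i=1,\dots,n$ to get the telescoping estimate
\begin{equation*}
d(\bm{x})=\sum_{i=1}^n d(x_{i-1},x_i)\leq \sum_{i=1}^n (t_i-t_{i-1})= t_n-t_0.
\end{equation*}
Next, since $(x_0,t_0),(x_n,t_n)\in\Cau^\ell(X;a,b)$, the endpoint conditions give $t_0\geq d(a,x_0)\geq 0$ and $t_n\leq \ell-d(x_n,b)\leq \ell$. Combining, $d(\bm{x})\leq t_n-t_0\leq \ell$, so $d(\bm{x})=\ell$ forces every inequality in this chain to be an equality.

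For the forward direction, equality in $t_n-t_0\leq \ell$ (combined with $t_0\geq 0$ and $t_n\leq \ell$) forces $t_0=0$ and $t_n=\ell$, while equality in the telescoped estimate forces $d(x_{i-1},x_i)=t_i-t_{i-1}$ for each $i$; these are precisely the defining conditions of a light-like sequence. The converse direction is immediate: if the sequence is light-like then substituting $t_0=0$, $t_n=\ell$, and $d(x_{i-1},x_i)=t_i-t_{i-1}$ into the sum gives $d(\bm{x})=t_n-t_0=\ell$.

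There is no real obstacle here; the statement is essentially a saturation lemma for a chain of inequalities, and the only thing to be careful about is remembering that the endpoint conditions $d(a,x_0)\leq t_0$ and $d(x_n,b)\leq \ell-t_n$ are what pin $t_0$ and $t_n$ to the extremes $0$ and $\ell$ when equality is achieved.
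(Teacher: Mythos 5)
Your proof is correct and follows essentially the same argument as the paper: telescope the causal inequalities to get $d(\bm{x})\leq t_n-t_0\leq\ell$, then observe that $d(\bm{x})=\ell$ forces saturation of every inequality, which is exactly the light-like condition. The converse is immediate in both treatments.
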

\begin{proof}%[Proof of Proposition \ref{prop:can}]
If $d(x_0, \dots, x_n)=\ell$, then by (\ref{eq:ineq}), we have 
\[
\ell=d(x_0, \dots, x_n)=\sum_{i=1}^nd(x_{i-1}, x_i)\leq t_n-t_0. 
\]
Since $t_n\leq\ell$, we have $t_0=0, t_n=\ell$ and $d(x_{i-1}, x_i)=t_i-t_{i-1}$ for 
all $1\leq i\leq n$. 

The converse is straightforward. 
\end{proof}

If a sequence (\ref{eq:TLseq}) is light-like, then $t_k$ is 
expressed as $t_k=\sum_{i=1}^k d(x_{i-1}, x_i)$ for $k\geq 1$ (and $t_0=0$). 
So the parameter $t_k$ are recovered from the points $x_0, \dots, x_n$. 
We will sometimes omit the parameter $t_k$ and denote the light-like 
sequence $(x_0, \dots, x_n)$. 

Since $d(x_0, \dots, x_n)<\ell$ implies $d(x_0, \dots, \widehat{x_k}, \dots, x_n)<\ell$, 
non-light-like sequences determine a subcomplex of the order complex 
$\Delta\Cau^\ell(X; a, b)$. 

\begin{definition}
\label{def:Deltadot}
$\Deltadot\Cau^\ell(X; a, b)$ is the subcomplex of $\Delta\Cau^\ell(X; a, b)$ 
consisting of causal sequence $\sigma=((x_0, t_0), \dots, (x_n, t_n))$ satisfying 
\[
d(x_0,  \dots, x_n)<\ell. 
\]
$\Deltadot\Cau^\ell(X)$ is also similarly define. 
\end{definition}

\begin{definition}
\label{def:mghtpy}
The \emph{magnitude homotopy type} for $a, b\in X$ and $\ell\in\R_{\geq 0}$ is defined 
as the pointed CW-complex 
\begin{equation}
\calM^\ell(X; a, b):=
\frac{|\Delta\Cau^\ell(X; a, b)|}{|\Deltadot\Cau^\ell(X; a, b)|}. 
\end{equation}
We also define the total magnitude homotopy type by 
\[
\calM^\ell(X)
=
\frac{|\Delta\Cau^\ell(X)|}{|\Deltadot\Cau^\ell(X)|}. 
\]
\end{definition}

Note that each $\calM^\ell(X; a, b)$ is a pointed space. The total magnitude homotopy 
type also has the following expression in terms of wedge sum 
\[
\calM^\ell(X)
=\bigvee_{a, b\in X}\calM^\ell(X; a, b). 
\]

\begin{remark}
The space $\calM^\ell(X)$ was constructed 
by Hepworth-Willerton \cite[Definition 8.1]{HW} for graphs and 
by Bottinelli-Kaiser \cite[Definition 4.4]{bot-kai} for metric spaces 
as the realization of certain simplicial set. 
Our Definition \ref{def:mghtpy} realizes it as the quotient of a 
simplicial complex by a subcomplex. 
\end{remark}

\begin{definition}
Two metric spaces $X$ and $Y$ are said to be \emph{magnitude homotopy equivalent} if 
$\calM^\ell(X)$ and $\calM^\ell(Y)$ are homotopy equivalent for any $\ell\geq 0$. 
\end{definition}

\begin{theorem}
\label{thm:mainisom}
For $k\in\Z_{\geq 0}$, $\ell\in\R_{\geq 0}$ and $a, b\in X$, we have 
\begin{equation}
\begin{split}
\widetilde{H}_k(\calM^\ell(X; a, b))&\cong\MH^\ell_k(X; a, b),\\
\widetilde{H}_k(\calM^\ell(X))&\cong\MH^\ell_k(X). 
\end{split}
\end{equation}
\end{theorem}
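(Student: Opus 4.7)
The plan is to identify the relative simplicial chain complex $C_*(\Delta\Cau^\ell(X;a,b),\Deltadot\Cau^\ell(X;a,b))$ with the magnitude chain complex $\MC^\ell_*(X;a,b)$, and then invoke the standard isomorphism $\widetilde{H}_*(|K|/|L|)\cong H_*(|K|,|L|)$ for CW pairs.

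By Definition \ref{def:Deltadot}, $C_k(\Delta\Cau^\ell(X;a,b),\Deltadot\Cau^\ell(X;a,b))$ is freely generated by those $k$-simplices of $\Delta\Cau^\ell(X;a,b)$ not lying in $\Deltadot\Cau^\ell(X;a,b)$, i.e., by light-like sequences $\sigma=((x_0,t_0),\dots,(x_k,t_k))$ of length $\ell$. The conditions $t_0=0$, $t_k=\ell$ together with $d(a,x_0)\leq t_0$ and $d(x_k,b)\leq\ell-t_k$ force $x_0=a$ and $x_k=b$; the identity $d(x_{i-1},x_i)=t_i-t_{i-1}>0$ both guarantees $x_{i-1}\neq x_i$ and makes each $t_i$ a function of the underlying tuple. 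Thus $\sigma\mapsto(x_0,\dots,x_k)$ gives a natural bijection with the basis of $\MC^\ell_k(X;a,b)$.

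To match differentials, decompose the simplicial boundary as $\partial\sigma=\sum_{i=0}^k(-1)^i\partial_i\sigma$. The extremal faces $\partial_0\sigma$ and $\partial_k\sigma$ shrink the time span strictly, since $t_1>0$ and $t_{k-1}<\ell$ in any light-like $\sigma$, so they have length $<\ell$ and vanish in the quotient. For $1\leq i\leq k-1$, the face $\partial_i\sigma$ has length $\ell - d(x_{i-1},x_i) - d(x_i,x_{i+1}) + d(x_{i-1},x_{i+1})$, which by the triangle inequality is $\leq\ell$, with equality precisely when $d(x_{i-1},x_{i+1})=d(x_{i-1},x_i)+d(x_i,x_{i+1})$. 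This is exactly the non-vanishing criterion for $\partial_i$ in (\ref{eq:bdry}), and the signs agree, so we have a canonical chain isomorphism.

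Passing to homology via the CW pair $(|\Delta\Cau^\ell(X;a,b)|,|\Deltadot\Cau^\ell(X;a,b)|)$ yields $\widetilde{H}_k(\calM^\ell(X;a,b))\cong\MH^\ell_k(X;a,b)$, with degenerate cases (empty causal interval, $\ell=0$, $d(a,b)=\ell$) accommodated by the $\void$/$\emptycpx$ conventions of Remark \ref{rem:void}. The total statement follows from the wedge decomposition $\calM^\ell(X)=\bigvee_{a,b\in X}\calM^\ell(X;a,b)$ and the direct-sum splitting of $\MH^\ell_k(X)$. The subtle point in the argument is the claim that $\partial_0\sigma$ and $\partial_k\sigma$ fall into $\Deltadot\Cau^\ell$: this is precisely where the time parameter pays off, turning what was an ad hoc degeneracy condition in the magnitude chain complex into an automatic consequence of the causal poset structure.
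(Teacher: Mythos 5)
Your proposal is correct and follows essentially the same route as the paper: identify the relative chain complex $C_*(\Delta\Cau^\ell,\Deltadot\Cau^\ell)$ with $\MC^\ell_*$ via the bijection between light-like sequences and magnitude chains, check that interior faces match the $\partial_i$ of (\ref{eq:bdry}) via the triangle inequality, and pass to homology of the CW pair. If anything, your explicit verification that the extremal faces $\partial_0\sigma$ and $\partial_k\sigma$ land in $\Deltadot\Cau^\ell$ (because $t_1>0$ and $t_{k-1}<\ell$) spells out a step the paper's proof leaves implicit.
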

\begin{proof}
First note that the reduced homology group of the quotient space 
$\widetilde{H}_n(\calM^\ell(X; a, b))$ is isomorphic to 
the homology group of the pair 
$H_*(\Delta\Cau^\ell(X; a, b), \Deltadot\Cau^\ell(X; a, b))$. 
We compare the magnitude chain complex $\MC^\ell_*(X; a, b)$ with 
the chain complex of the pair 
$C_*(\Delta\Cau^\ell(X; a, b), \Deltadot\Cau^\ell(X; a, b))$. 
The magnitude chain complex $\MC^\ell_n(X; a, b)$ is 
generated by sequences 
$(x_0, \dots, x_n)$ with $x_0=a, x_n=b$ and $d(x_0, \dots, x_n)=\ell$. 
Let $t_i=d(a, x_0, \dots, x_i)$, for $i=0, \dots, n$. 
Then 
we obtain a light-like sequence 
\[
((x_0, t_0), (x_1, t_1), \dots, (x_n, t_n)). 
\]
This correspondence gives a chain map 
\begin{equation}
\label{eq:chainisom}
\MC^\ell_*(X; a, b)\longrightarrow 
C_*(\Delta\Cau^\ell(X; a, b), \Deltadot\Cau^\ell(X; a, b)). 
\end{equation}
To verify this fact, consider the $i$-th boundary 
$\partial_i(x_0, \dots, x_n)$ (see Definition \ref{def:MC}). 
If $d(x_{i-1}, x_{i+1})=d(x_{i-1}, x_i, x_{i+1})$, then it is also equal to 
$t_{i+1}-t_{i-1}$. Hence, the causal sequence 
\begin{equation}
\label{eq:causalbdry}
((x_0, t_0), \dots, (x_{i-1}, t_{i-1}), (x_{i+1}, t_{i+1}), \dots, (x_n, t_n))
\end{equation}
is also a light-like sequence, which is equal to the image of 
$\partial_i(x_0, \dots, x_n)$. 
If $d(x_{i-1}, x_{i+1})<d(x_{i-1}, x_i, x_{i+1})$, then 
$\partial_i(x_0, \dots, x_n)=0$, and since 
$d(x_{i-1}, x_{i+1})<t_{i+1}-t_{i-1}$, the sequence 
(\ref{eq:causalbdry}) is not light-like and 
is contained in $\Deltadot\Cau^\ell(X; a, b)$. 

Clearly (\ref{eq:chainisom}) gives an injective chain map. 
We will prove the surjectivity. 
Recall that the relative complex 
$C_*(\Delta\Cau^\ell(X; a, b), \Deltadot\Cau^\ell(X; a, b))$ 
is generated by simplices which is not contained in 
$\Deltadot\Cau^\ell(X; a, b)$. 
Let $\sigma=((x_0, t_0), \dots, (x_n, t_n))$ be an $n$-simplex in 
$\Delta\Cau^\ell(X; a, b)$. Then, by Proposition \ref{prop:can}, 
$\sigma$ is not contained in $\Deltadot\Cau^\ell(X; a, b)$ (equivalently 
$d(x_0, \dots, x_n)=\ell$)  if and only if 
it is a light-like sequence with $x_0=a, x_n=b$ and $d(x_0, \dots, x_n)=\ell$. 
This is clearly obtained as an image of the above map from $\MC^\ell_n(X; a, b)$. 
Hence  (\ref{eq:chainisom}) is an isomorphism of 
chain complexes. 
\end{proof}

\begin{remark}
\label{rem:empty}
(on $\ell=0$) 
If $\Cau^\ell(X; a, b)=\emptyset$, then $\calM^\ell(X; a, b)=\{*_0\}$. 
In particular, we consider the case $\ell=0$. 
If $a=b$, then $\Cau^0(X; a, a)=\{(a, 0)\}$, otherwise, $\Cau^0(X; a, b)=\emptyset$. 
We denote the set $\{(a, 0)\}$ by $*_1$. Then 
$\Delta\Cau^0(X; a, a)=\{*_1\}$ and, 
$\Deltadot\Cau^0(X; a, a)=\emptyset$, 
we have 
\[
\calM^0(X; a, b)=
\begin{cases}
\{*_1\}/\emptyset=\{*_1, *_0\}, & a=b, \\
\emptyset/\emptyset=
\{*_0\}, & a\neq b. 
\end{cases}
\]
%$\void\neq\emptycpx$
\end{remark}

If $\ell<d(a, b)$, then $\Cau^\ell(X; a, b)=\emptyset$. 
We skip this case (we will discuss in \S \ref{sec:var}). 
Suppose $\ell=d(a, b)>0$. 
Then by definition, 
$\Cau^{d(a, b)}(X; a, b)$ is the set of $(x, t)\in X\times [0, d(a, b)]$ such that 
\[
d(a, x)\leq t,\ \ d(x, b)\leq d(a, b)-t. 
\]
By the triangle inequality, we have $t=d(a, x)$ and $d(a, b)=d(a, x, b)$. This is nothing but the 
so-called \emph{interval} in the metric space. We here introduce some kinds of intervals, 
which are analogy of closed and open intervals: For $a, b\in X$, 
\begin{equation}
\begin{split}
I[a, b]&:=\{x\in X\mid d(a, x, b)=d(a, b)\}, \\
I(a, b)&:= I[a, b]\smallsetminus\{a, b\}, \\
I(a, b]&:=I[a, b]\smallsetminus\{a\}, \\
I[a, b)&:=I[a, b]\smallsetminus\{b\}. 
\end{split}
\end{equation}
These intervals have natural order structure defined by the relation 
``$x\leq y$ if and only if $d(a, x, y, b)=d(a, b)$''. Note that 
$\Cau^{d(a, b)}(X; a, b)\cong I[a, b]$ 
as posets. 
Using these interval posets, 
the magnitude homotopy type for the case $\ell=d(a, b)$ 
can be expressed as follows. 
\begin{proposition}
\label{prop:kernel}
Let $X$ be a metric space and $a, b\in X$. Then 
\begin{equation}
\label{eq:interv}
\calM^{d(a, b)}(X; a, b)\approx
\frac{|\Delta I[a, b]|}{|\{\sigma\in\Delta I[a, b]\mid 
\sigma\not\supseteq\{a, b\}\}|}. 
\end{equation}
\end{proposition}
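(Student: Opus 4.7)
The plan is to exploit the observation, made in the paragraph preceding the proposition, that when $\ell=d(a,b)$ the causal poset collapses to the metric interval: $\Cau^{d(a,b)}(X; a, b)\cong I[a,b]$. The triangle inequality forces the time parameter $t$ of any point $(x,t)\in\Cau^{d(a,b)}(X;a,b)$ to equal $d(a,x)$, so the time coordinate carries no extra information. Consequently the order complexes are canonically isomorphic,
\[
\Delta\Cau^{d(a,b)}(X; a, b)\cong\Delta I[a,b],\qquad ((x_0,t_0),\dots,(x_n,t_n))\longleftrightarrow (x_0<\cdots<x_n).
\]

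With this identification in hand, the main task is to pin down the image of the subcomplex $\Deltadot\Cau^{d(a,b)}(X;a,b)$ inside $\Delta I[a,b]$. Given a chain $x_0<x_1<\cdots<x_n$ in $I[a,b]$, the relation $d(a,x_0,x_1,\ldots,x_n,b)=d(a,b)$ rearranges to
\[
d(x_0,x_1,\ldots,x_n)=d(a,b)-d(a,x_0)-d(x_n,b).
\]
Thus the corresponding causal sequence is light-like, i.e.\ has length $\ell=d(a,b)$, if and only if $d(a,x_0)=0$ and $d(x_n,b)=0$, i.e.\ $x_0=a$ and $x_n=b$. Equivalently, a simplex $\sigma\in\Delta I[a,b]$ lies in the non-light-like subcomplex exactly when $\sigma\not\supseteq\{a,b\}$.

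Combining the two identifications and passing to quotients gives the desired homeomorphism
\[
\calM^{d(a,b)}(X;a,b)=\frac{|\Delta\Cau^{d(a,b)}(X;a,b)|}{|\Deltadot\Cau^{d(a,b)}(X;a,b)|}\approx\frac{|\Delta I[a,b]|}{|\{\sigma\in\Delta I[a,b]\mid\sigma\not\supseteq\{a,b\}\}|}.
\]
I do not expect a serious obstacle here: the content is entirely bookkeeping around the fact that when $\ell=d(a,b)$ the light-like condition on a causal sequence degenerates to the purely combinatorial requirement that the underlying chain in $I[a,b]$ contain the two endpoints. The only mild subtlety is the boundary case where $I[a,b]=\{a,b\}$ (and even $a=b$ if $d(a,b)=0$, though the proposition tacitly assumes $d(a,b)>0$), but the formula remains correct in these cases under the conventions of Remark \ref{rem:empty} and Remark \ref{rem:void}.
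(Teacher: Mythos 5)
Your proposal is correct and follows essentially the same route as the paper: identify $\Cau^{d(a,b)}(X;a,b)$ with $I[a,b]$ via $x\mapsto(x,d(a,x))$, then observe that a chain has full length $d(a,b)$ exactly when it contains both endpoints $a$ and $b$. Your explicit rearrangement $d(x_0,\dots,x_n)=d(a,b)-d(a,x_0)-d(x_n,b)$ just spells out the step the paper states without computation.
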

\begin{proof}
First note that the correspondence 
$I[a, b]\ni x\longmapsto (x, d(a, x))\in
\Cau^{d(a, b)}(X; a, b)$ gives an isomorphism 
$I[a, b]\cong \Cau^{d(a, b)}(X; a, b)$. 
Thus we have 
$\Delta\Cau^{d(a, b)}(X; a, b)\cong\Delta I[a, b]$. 
Let $\bm{x}=(x_0, \dots, x_n)\in\Delta I[a, b]$. 
Then $d(\bm{x})=d(a, b)$ if and only if 
$x_0=a, x_n=b$. 
Hence the sequence becomes shorter if and only if 
the sequence does not contain $\{a, b\}$. 
\end{proof}

\begin{example}
\label{ex:easy}
\begin{itemize}
\item[(1)] 
Let $C_4$ be the cycle graph with four vertices 
as in Figure \ref{fig:firstexamples}. 
Then $I[a, b]=\{a, b, c, d\}$ with order relations 
$a\leq d\leq b$ and $a\leq c\leq d$. The 
order complex $\Delta I[a, b]$ is a union of two simplices 
$\{a, d, b\}$ and $\{a, c, b\}$. 
The boundary edges $\{a, d\}, \{d, b\}, \{a, c\}, \{c, b\}$ 
are shorter than $2=d(a, b)$. 
Hence $\calM^2(C_4; a, b)\simeq S^2$. 
\begin{figure}[htbp]
\centering
\begin{tikzpicture}%[scale=1.2]

%\draw [help lines] (0,0) grid (9,5);%(0,0)から(10,4)までの"細線の方眼"

\filldraw[thick] (0,0) circle (2pt) node[left] {$a$} -- (2,0) circle (2pt) node[right] {$c$} -- (2,2) circle (2pt) node[right] {$b$} -- (0,2) circle (2pt) node[left] {$d$} -- (0,0); 

\filldraw[fill=gray!20!white, draw=black, thick] 
(7,0) node[below] {$a$} --(5,1) node[left] {$d$} --(7,2) node[above] {$b$} --(9,1)node[right] {$c$} --(7,0)--(7,2);

\end{tikzpicture}
\caption{The cycle graph $C_4$ and $\Delta I[a, b]$.}
\label{fig:firstexamples}
\end{figure}
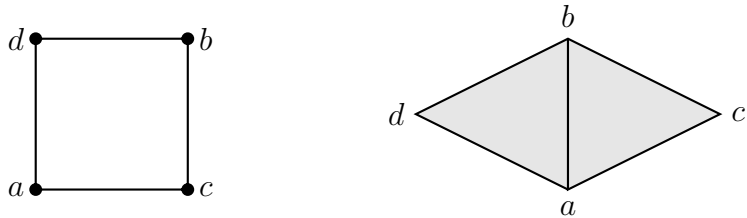
\item[(2)] Let $X$ be a metric space and $a, b\in X$. 
Suppose that the interval $I[a, b]$ is 
totally ordered. 
Then the numerator $\Delta I[a, b]$ of the formula (\ref{eq:interv})
is a (possibly infinite dimensional) simplex. The denominator 
is the union of simplices which does not contain the $1$-simplex 
$\{a, b\}$. Thus we have 
\[
\calM^{d(a, b)}(X; a, b)\simeq
\begin{cases}
*, &\mbox{ if } I(a, b)\neq\emptyset, \\
S^1,  &\mbox{ if } I(a, b)=\emptyset. 
\end{cases}
\]
\end{itemize}
\end{example}

Next we present several examples of total magnitude homotopy types for graphs. 
\begin{example}
\label{ex:graphs}
Examples of the magnitude homotopy type for several finite graphs 
$G=(V, E)$. 
\begin{itemize}
\item[(1)] 
Let $G$ be a tree. Then, 
\[
\calM^\ell(G)\approx
\begin{cases}
S^0\vee\cdots\vee S^0, (\text{wedge of }|V|\text{ spheres})\approx\{(|V|+1) \text{ points}\}, &\ell=0, \\
S^\ell\vee\cdots\vee S^\ell, (\text{wedge of }2|E|\text{ spheres}), &\ell\geq 1.
\end{cases}
\]
(This is not trivial. 
We will prove more general results in \S \ref{sec:frame}. 
See Corollary \ref{cor:toset}.) 
\item[(2)]Let $G$ be a complete graph with $m$ vertices. Then, 
\[
\calM^\ell(G)\approx S^\ell\vee\cdots\vee S^\ell, 
(\text{ wedge of }m(m-1)^\ell\text{ spheres}).
\]
Note that the degree and length coincide for any sequence of 
the complete graph. So, the boundary sequence has 
shorter length than the original sequence. Therefore, 
the maximal faces of $\Delta\Cau^\ell(G)$ are all 
$\ell$-dimensional simplices, and the boundary simplices 
are all belonging to $\Deltadot\Cau^\ell(G)$. 
\item[(3)]\cite[Theorem 3.4]{taj-yos} Let $G$ be a pawful graph.
Recall that a pawful graph $G$ is a graph satisfying 
the following conditions:
\begin{itemize}
\item$d(x, y)\leq 2$ for any $x, y\in G$, 
\item for any $x, y, z\in G$ with $d(x, y)=d(y, z)=2$ and 
$d(x, z)=1$, there exists $a\in G$ such that $d(a, x)=d(a, y)=d(a, z)=1$. 
\end{itemize}
Then, $\calM^\ell(G)$ is homotopy equivalent to wedge of 
$\ell$-spheres. 
(It is proved that the Asao-Izumihara complex 
(defined in \S \ref{sec:var}) is homotopy equivalent to 
a wedge of $(\ell-2)$-spheres for pawful graphs in \cite{taj-yos}. 
We will prove later (Theorem \ref{thm:double}) 
that the magnitude homotopy type is homotopy equivalent to 
the double suspension of the Asao-Izumihara complex.) 
\end{itemize}
So far magnitude homotopy types are always wedge of spheres. 
However, in general, the magnitude homotopy type of 
a graph can become arbitrarily complicated. 
(See Corollary \ref{cor:arbitrary}, Proposition \ref{prop:const}.) 
\end{example}

\begin{remark}
It is natural to ask whether properties of magnitude homology groups 
can be explained by classical topological results on the magnitude homotopy types. 
We will see that K\"unneth formula and Mayer-Vietoris type formula for 
magnitude homology groups are actually explained by the notions of 
classical topology of CW complexes via magnitude homotopy types. 
\end{remark}

\begin{remark}
There are several variants of the magnitude homotopy type. 
\begin{itemize}
\item[(i)] 
Let $\varepsilon>0$. Define $\Delta^{\leq\ell-\varepsilon}\Cau^\ell(X; a, b)$ by 
\[
\Delta^{\leq\ell-\varepsilon}\Cau^\ell(X; a, b)=
\{((x_0, t_0), \dots, (x_n, t_n))\in\Delta\Cau^\ell(X; a, b)\mid 
d(x_0, \dots, x_n)\leq\ell-\varepsilon\}. 
\]
This is clearly a subcomplex of $\Delta\Cau^\ell(X; a, b)$. 
Let 
\[
\calM^{(\ell-\varepsilon, \ell]}(X; a, b):= 
\frac{|\Delta\Cau^\ell(X; a, b)|}{|\Delta^{\leq\ell-\varepsilon}
\Cau^{\ell}(X; a, b)|}. 
\]
The reduced homology group of this space captures 
a variants of magnitude homology groups. 
Namely, the chain is generated by time-parametrized sequence 
$((x_0, t_0), \dots, (x_n, t_n))$ 
satisfying 
\[
d(x_{i-1}, x_i)\leq t_i-t_{i-1}, \mbox{ for }i=0, \dots, n+1, 
\]
where $x_{-1}=a, x_{n+1}=b, t_{-1}=0, t_{n+1}=\ell$ and 
\[
\ell-\varepsilon < d(x_0, x_1, \dots, x_n)\leq\ell. 
\]
\item[(ii)] 
The notion of Lorentzian length space (\cite{ks-lor, min-lor}) 
is a generalization of the Minkowski space $\R^n\times\R$. 
We can define time-like, light-like, or causal paths for 
Lorentzian length space. 
It seems to be an interesting direction 
to study the magnitude homotopy type (or homology group) 
for Lorentzian length spaces. 
\end{itemize}
\end{remark}

\subsection{Recovering finite metric spaces}
\label{sec:recover}

Suppose $X$ is a finite metric space. Then for each 
$\ell\geq 0$ and $a, b\in X$, 
there are only finitely many light-like sequences of length $\ell$ 
from $a$ to $b$. 
Hence $\calM^\ell(X; a, b)$ and their wedge 
sum $\calM^\ell(X)$ is a finite CW complex. The magnitude 
can be expressed in terms of the reduced Euler characteristic 
$\widetilde{\chi}(-)=\sum_{k\geq 0}(-1)^k\cdot\rank\widetilde{H}_k(-)$ 
of the magnitude homotopy type $\calM^\ell(X)$. 
From the results in \S \ref{sec:MH}, we have 
the following. 
\begin{proposition}
\label{prop:inverse}
Let $X$ be a finite metric space. 
Then the matrix $Z_X^{-1}$, the magnitude weighting 
$w:X\longrightarrow\Hahn$, and the magnitude $\Mag(X)\in\Hahn$ 
are expressed as follows. 
\begin{equation}
\begin{split}
Z_X^{-1}(a, b)&=\sum_{\ell\geq 0}\widetilde{\chi}(\calM^\ell(X; a, b))q^\ell, \\
w(a)&=\sum_{\ell\geq 0, b\in X}
\widetilde{\chi}(\calM^\ell(X; a, b))q^\ell, \\
\Mag(X)&=\sum_{\ell\geq 0}\widetilde{\chi}(\calM^\ell(X))q^\ell. 
\end{split}
\end{equation}
\end{proposition}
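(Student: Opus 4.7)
The strategy is to observe that this proposition is essentially a direct translation of the three formulas already derived at the end of \S \ref{sec:MH}, by replacing each alternating sum of ranks of magnitude homology with the reduced Euler characteristic of the corresponding magnitude homotopy type. The mechanism that allows this replacement is Theorem \ref{thm:mainisom}.

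First I would verify that the reduced Euler characteristics appearing on the right-hand sides are well-defined. Since $X$ is finite, there are only finitely many light-like sequences $(x_0,\dots,x_k)$ from $a$ to $b$ of length exactly $\ell$, so by the construction in \S \ref{sec:const}, $\calM^\ell(X;a,b)$ is a finite CW complex (indeed, one whose cells are indexed by such sequences, in bijective correspondence with the generators of $\MC_*^\ell(X;a,b)$ via the proof of Theorem \ref{thm:mainisom}). Hence $\widetilde{\chi}(\calM^\ell(X;a,b))$ is well-defined as a finite alternating sum, and similarly for $\calM^\ell(X)=\bigvee_{a,b\in X}\calM^\ell(X;a,b)$.

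Next, applying Theorem \ref{thm:mainisom}, we have $\widetilde{H}_k(\calM^\ell(X;a,b))\cong \MH_k^\ell(X;a,b)$, so taking alternating sums of ranks yields
\begin{equation*}
\widetilde{\chi}(\calM^\ell(X;a,b))=\sum_{k\geq 0}(-1)^k\rank\MH_k^\ell(X;a,b),
\end{equation*}
and likewise $\widetilde{\chi}(\calM^\ell(X))=\sum_{k\geq 0}(-1)^k\rank\MH_k^\ell(X)$. Substituting these identities into the three formulas stated at the end of \S \ref{sec:MH} (which express $Z_X^{-1}(a,b)$, $w(a)$, and $\Mag(X)$ as generating series in $q$ whose coefficients are precisely these alternating rank sums) immediately yields the three claimed identities.

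There is essentially no obstacle: the proof is a one-line invocation of Theorem \ref{thm:mainisom} combined with the definition of the reduced Euler characteristic. The only subtle point worth flagging is the convergence of the series in the ring $\Hahn$ of Hahn series, but this is inherited verbatim from \S \ref{sec:MH}, since the support $\{\ell\in\R_{\geq 0}\mid \widetilde{\chi}(\calM^\ell(X;a,b))\neq 0\}$ coincides with a subset of the (well-ordered, locally finite) set of lengths of sequences in $X$, which was already shown to give a well-defined element of $\Hahn$.
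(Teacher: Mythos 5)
Your proposal is correct and matches the paper's own argument: the paper likewise notes that finiteness of $X$ makes $\calM^\ell(X;a,b)$ a finite CW complex, invokes Theorem \ref{thm:mainisom} to identify $\widetilde{\chi}(\calM^\ell(X;a,b))$ with the alternating rank sum of $\MH_*^\ell(X;a,b)$, and then substitutes into the generating-series formulas at the end of \S \ref{sec:MH}. Your extra remarks on well-definedness and convergence in $\Hahn$ are harmless elaborations of what the paper leaves implicit.
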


%Let $X$ be a finite metric space. Then 
We can also prove that the family of spaces 
$\{\calM^\ell(X; a, b)\}_{\ell\geq 0, a, b\in X}$ has information on the metric. 
More precisely, we have the following. 
\begin{theorem}
\label{thm:recover}
Let $X$ and $Y$ be finite metric spaces with $|X|=|Y|$. 
Let $f:X\longrightarrow Y$ be a map. 
Then the following are equivalent. 
\begin{itemize}
\item[$(a)$] 
The map $f$ is an isometry, i.e, 
$d_Y(f(a), f(b))=d_X(a, b)$ for any $a, b\in X$. 
\item[$(b)$] 
$\calM^\ell(X; a, b)$ and $\calM^\ell(Y; f(a), f(b))$ are homotopy equivalent for any $\ell\geq 0$, 
and $a, b\in X$. 
\item[$(c)$] 
$\widetilde{\chi}(\calM^\ell(X; a, b))=\widetilde{\chi}(\calM^\ell(Y; f(a), f(b)))$ 
for any $\ell\geq 0$ and $a, b\in X$. 
\end{itemize}
\end{theorem}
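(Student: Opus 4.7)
The plan is to prove the cycle (a) $\Rightarrow$ (b) $\Rightarrow$ (c) $\Rightarrow$ (a), with the last implication being the substantive one. For (a) $\Rightarrow$ (b), an isometry $f$ between finite metric spaces with $|X|=|Y|$ is a bijection whose inverse is also an isometry, so both $f$ and $f^{-1}$ are $1$-Lipschitz maps of $2$-pointed metric spaces. Applying the functor $\calM^\ell\colon\Met_{**}^1\to\Top_*$ produces mutually inverse continuous maps between $\calM^\ell(X;a,b)$ and $\calM^\ell(Y;f(a),f(b))$, yielding a homeomorphism and hence a homotopy equivalence. The step (b) $\Rightarrow$ (c) is immediate from the fact that $\calM^\ell(X;a,b)$ is a finite CW complex (finiteness of $X$ means there are only finitely many light-like sequences of length $\ell$), so the reduced Euler characteristic is well-defined and is a homotopy invariant.

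For (c) $\Rightarrow$ (a), I would first use the case $\ell=0$ to show that $f$ is a bijection. By Remark \ref{rem:empty}, $\widetilde{\chi}(\calM^0(X;a,b))$ equals $1$ when $a=b$ and $0$ otherwise, and the same dichotomy holds for $Y$. Condition (c) therefore forces $a\neq b\Rightarrow f(a)\neq f(b)$, so $f$ is injective; since $|X|=|Y|$ it is a bijection. Next, Proposition \ref{prop:inverse} translates condition (c) into the identity in $\Hahn$
\[
Z_X^{-1}(a,b)\;=\;\sum_{\ell\geq 0}\widetilde{\chi}(\calM^\ell(X;a,b))\,q^\ell\;=\;\sum_{\ell\geq 0}\widetilde{\chi}(\calM^\ell(Y;f(a),f(b)))\,q^\ell\;=\;Z_Y^{-1}(f(a),f(b))
\]
for every $a,b\in X$. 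Letting $P$ denote the permutation matrix associated to the bijection $f$, this reads $Z_X^{-1}=P\,Z_Y^{-1}\,P^{T}$. Inverting both sides in the ring of matrices over $\Hahn$ (using $P^{-1}=P^{T}$) gives $Z_X=P\,Z_Y\,P^{T}$, so $q^{d_X(a,b)}=q^{d_Y(f(a),f(b))}$ in $\Hahn$ for every $a,b\in X$. Since the map $t\mapsto q^t$ is injective on $\R_{\geq 0}$ (distinct real exponents produce Hahn series with disjoint support), we conclude $d_X(a,b)=d_Y(f(a),f(b))$, which is (a).

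The key conceptual observation---and where all topological input enters---is that the entire family of reduced Euler characteristics packages into the matrix $Z_X^{-1}$, whose matrix inverse is the distance-encoding matrix $Z_X$. I do not foresee a real obstacle beyond extracting the bijectivity of $f$ from the $\ell=0$ data; everything else is routine matrix algebra over the Hahn series ring, already laid out in \S \ref{sec:MH} and encapsulated in Proposition \ref{prop:inverse}.
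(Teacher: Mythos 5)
Your proposal is correct and follows essentially the same route as the paper: the substantive implication $(c)\Rightarrow(a)$ is handled via Proposition \ref{prop:inverse} by packaging the Euler characteristics into $Z_X^{-1}$ and inverting over $\Hahn$. The only difference is that you first extract bijectivity of $f$ from the $\ell=0$ data (Remark \ref{rem:empty}) before conjugating by the permutation matrix, a step the paper's one-line ``Hence $Z_X^{-1}=Z_Y^{-1}$'' tacitly assumes; this is a harmless and in fact welcome tightening, not a different argument.
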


\begin{proof}
The implications $(a)\Longrightarrow (b)\Longrightarrow (c)$ is obvious. 
Assume $(c)$. By Proposition \ref{prop:inverse}, $Z_X^{-1}(a, b)=Z_Y^{-1}(f(a), f(b))$ holds 
for any $a, b\in X$. Hence $Z_X^{-1}=Z_Y^{-1}$. Taking the inverse, we have $Z_X=Z_Y$. 
This implies $d_X(a, b)=d_Y(f(a), f(b))$ for any $a, b\in X$, 
thus we have $(a)$. 
\end{proof}

\subsection{Smaller model}
\label{sec:smaller}

The cells of the magnitude homotopy type $\calM^\ell(X; a, b)$ 
are one-to-one corresponding to the light-like sequence 
$((x_0, t_0), \dots, (x_n, t_n))$. 
Since the sequence with shorter length $d(x_0, \dots, x_n)<\ell$ is 
contained in $\Deltadot\Cau^\ell(X; a, b)$, it 
does not contribute to $\calM^\ell(X; a, b)$. 
We can construct a smaller poset than $\Cau^\ell(X; a, b)$ to define $\calM^\ell(X; a, b)$. 

\begin{definition}
\label{def:essential}
A point $(x, t)\in \Cau^\ell(X; a, b)$ is called \emph{an essential point} if 
it is a point in some light-like sequence of length $\ell$ from point $a$ to $b$. 
Denote by 
$\Cau_{\ess}^\ell(X; a, b)$ the set of all essential points, which is called 
\emph{the essential subposet} of the causal poset $\Cau^\ell(X; a, b)$. 
More precisely, 
\begin{equation}
\Cau_{\ess}^\ell(X; a, b):=
\left\{
(x, t)\in\Cau^\ell(X; a, b)
\left|
\begin{array}{l}
\exists n\geq k\geq 0, \exists x_0, \dots, x_k, \dots, x_n\in X\\
\mbox{s.t. }
x=x_k, t=d(a, x_0, \dots, x_k). \\
\mbox{and }d(a, x_0, \dots, x_n, b)=\ell. 
\end{array}
\right.
\right\}. 
\end{equation}
\end{definition}

As in the case of causal order complexes, we define 
\begin{equation}
\label{eq:intersect}
\Deltadot\Cau_{\ess}^\ell(X; a, b):=
\Delta\Cau_{\ess}^\ell(X; a, b)\cap\Deltadot\Cau^\ell(X; a, b). 
\end{equation}
The poset $\Cau_{\ess}^\ell(X; a, b)$ is smaller than $\Cau^\ell(X; a, b)$, which 
can define the magnitude homotopy type as follows. 
\begin{proposition}
\[
\calM^\ell(X; a, b)\approx
\frac{|\Delta\Cau_{\ess}^\ell(X; a, b)|}{|\Deltadot\Cau_{\ess}^\ell(X; a, b)|}. 
\] 
\end{proposition}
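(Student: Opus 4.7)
The plan is to reduce the statement to a simplicial-complex decomposition identity and then invoke the standard homeomorphism $A/(A\cap B) \cong (A\cup B)/B$ for a space written as a union. Concretely, I will show that
\[
\Delta\Cau^\ell(X; a, b) \;=\; \Delta\Cau_{\ess}^\ell(X; a, b)\,\cup\,\Deltadot\Cau^\ell(X; a, b)
\]
as subcomplexes. Combined with the defining identity $\Deltadot\Cau_{\ess}^\ell(X; a, b) = \Delta\Cau_{\ess}^\ell(X; a, b)\cap\Deltadot\Cau^\ell(X; a, b)$ from (\ref{eq:intersect}), setting $A = |\Delta\Cau_{\ess}^\ell(X; a, b)|$ and $B = |\Deltadot\Cau^\ell(X; a, b)|$ yields $A\cup B = |\Delta\Cau^\ell(X; a, b)|$ and $A\cap B = |\Deltadot\Cau_{\ess}^\ell(X; a, b)|$, and the canonical map $A/(A\cap B) \to (A\cup B)/B$ is a homeomorphism of CW complexes.

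The crux is therefore the claim $\Delta\Cau^\ell(X; a, b)\setminus\Deltadot\Cau^\ell(X; a, b)\subseteq\Delta\Cau_{\ess}^\ell(X; a, b)$. Take any simplex $\sigma=((x_0, t_0), \dots, (x_n, t_n))$ of $\Delta\Cau^\ell(X; a, b)$ with $\sigma\notin\Deltadot\Cau^\ell(X; a, b)$; by Definition \ref{def:Deltadot} this means $d(x_0, \dots, x_n) = \ell$. Proposition \ref{prop:can} then forces $\sigma$ to be a light-like sequence with $t_0 = 0$, $t_n = \ell$, and $d(x_{i-1}, x_i) = t_i - t_{i-1}$ throughout. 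Feeding these equalities into the causal inequalities (\ref{eq:ineq}) gives $d(a, x_0)\leq 0$ and $d(x_n, b)\leq 0$, whence $x_0 = a$ and $x_n = b$. Thus $\sigma$ is itself a light-like sequence of length $\ell$ from $a$ to $b$, so every vertex $(x_k, t_k)$ sits on such a sequence and is essential in the sense of Definition \ref{def:essential}. Hence $\sigma\in\Delta\Cau_{\ess}^\ell(X; a, b)$, establishing the decomposition.

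No serious obstacle is expected: once Proposition \ref{prop:can} is applied, the essential-vertex condition reads off directly from $\sigma$ itself being a witness light-like path. The only minor care point is to remember that the equality $A/(A\cap B) = (A\cup B)/B$ of quotient topological spaces holds verbatim for a cover by subcomplexes of a CW complex, since both sides have the same underlying set of cells (the relative cells of $A$ over $A\cap B$) with identical attaching maps inherited from $A\cup B$.
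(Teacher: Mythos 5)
Your proposal is correct and follows essentially the same route as the paper: both arguments reduce the statement to showing that every simplex of $\Delta\Cau^\ell(X;a,b)$ not lying in $\Deltadot\Cau^\ell(X;a,b)$ is a light-like sequence from $a$ to $b$ (via Proposition \ref{prop:can}) and hence has all its vertices essential, so that the two pairs have identical relative cells. Your write-up is if anything slightly more explicit than the paper's, spelling out the deduction $x_0=a$, $x_n=b$ from the causal inequalities and the homeomorphism $A/(A\cap B)\approx(A\cup B)/B$, which the paper leaves implicit in its "map of pairs" phrasing.
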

\begin{proof}
Clearly, we have a map of pairs of simplicial complexes, 
\[
(\Delta\Cau_{\ess}^\ell(X; a, b), \Deltadot\Cau_{\ess}^\ell(X; a, b))
\longrightarrow
(\Delta\Cau^\ell(X; a, b), \Deltadot\Cau^\ell(X; a, b)). 
\]
In view of (\ref{eq:intersect}), 
it is enough to show that every light-like sequence of $\Delta\Cau^\ell(X; a, b)$ 
is contained in $\Delta\Cau_{\ess}^\ell(X; a, b)$. Let $((x_0, t_0), \dots, (x_n, t_n))$ 
be a light-like sequence of $\Delta\Cau^\ell(X; a, b)$. Then by definition, 
$(x_k, t_k)\in\Cau_{\ess}^\ell(X; a, b)$ for $k=0, 1, \dots, n$. 
Hence the original light-like sequence is a chain in $\Delta\Cau_{\ess}^\ell(X; a, b)$. 
\end{proof}

\begin{example}
\label{ex:previous} Let $X=\{a, b\}$ with $d(a, b)=1$ (as in Example \ref{ex:2pts}). 
If $\ell>1$, there are no light-like sequence from point $a$ to $b$ of length $\ell$. Hence, 
$\Cau_{\ess}^\ell(X; a, b)=\emptyset$. 
\end{example}

\begin{example}
\label{ex:Cau}
Let $X=\{a, b, c\}$ be the vertex set of the complete graph $K_3$ as in 
Figure \ref{fig:K3}. Then $\Cau^2(X; a, b)$ and $\Cau_{\ess}^\ell(X; a, b)$ 
are as follows. 
\begin{equation}
\begin{split}
\Cau^2(X; a, b)&=\{(a, t), (c, 1), (b, t')\mid 0\leq t\leq 1, 1\leq t'\leq 2\}, \\
\Cau_{\ess}^2(X; a, b)&=\{(a, 0), (b, 2), (c, 1)\}. 
\end{split}
\end{equation}
The poset structure of $\Cau_{\ess}^2(X; a, b)$ is simple. 
It is just a linear order of three elements $(a, 0)<(c, 1)<(b, 2)$. 
However, the poset structure of $\Cau^2(X; a, b)$ is not simple. 
There are several types of inequalities (see the right of Figure \ref{fig:K3}): 
\begin{itemize}
\item 
$(a, t)<(a, t')$ if $0\leq t<t'\leq 1$, 
\item 
$(b, t)<(b, t')$ if $1\leq t<t'\leq 2$, 
\item 
$(a, t)<(b, t')$ if $1+t<t'\leq 2$, in particular, $(a, t)<(b, 1+t)$ is a covering relation, 
that is, there are no elements between these two elements, 
\item 
$(a, 0)<(c, 1)<(b, 2)$. 
\end{itemize}
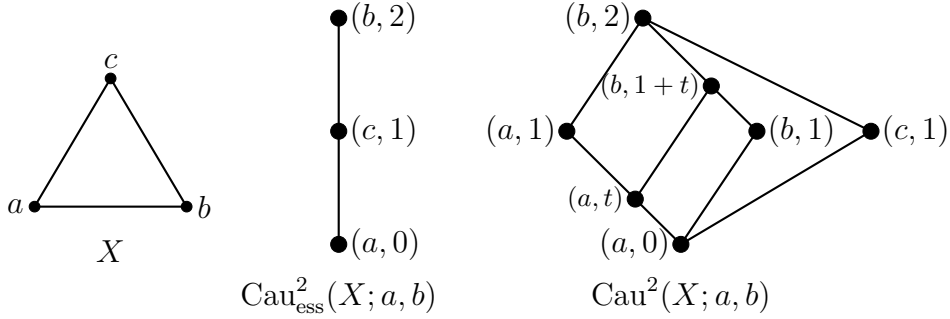
\begin{figure}[htbp]
\centering
\begin{tikzpicture}%[scale=1.2]

%\draw [help lines] (0,0) grid (9,5);%(0,0)から(10,4)までの"細線の方眼"

\filldraw[fill=black, draw=black] (-2,0.5) node[left] {$a$} circle (2pt); 
\filldraw[fill=black, draw=black] (0,0.5) node[right] {$b$} circle (2pt); 
\filldraw[fill=black, draw=black] (-1,2.2) node[above] {$c$} circle (2pt); 
\draw[thick] (-2,0.5)-- node[below=0.3cm] {$X$} (0,0.5)--(-1,2.2)--cycle; 

\filldraw[fill=black, draw=black] (2,0) node[below=0.3cm] {$\Cau_{\ess}^2(X; a, b)$} 
node[right] {$(a, 0)$} circle (3pt); 
\filldraw[fill=black, draw=black] (2,1.5) node[right] {$(c, 1)$} circle (3pt); 
\filldraw[fill=black, draw=black] (2,3) node[right] {$(b, 2)$} circle (3pt); 
\draw[thick] (2, 0)--(2, 3);

\filldraw[fill=black, draw=black] (6.5,0) node[below=0.3cm] {$\Cau^2(X; a, b)$} 
node[left] {$(a, 0)$} circle (3pt); 
\filldraw[fill=black, draw=black] (5,1.5) node[left] {$(a, 1)$} circle (3pt); 
\filldraw[fill=black, draw=black] (7.5,1.5) node[right] {$(b, 1)$} circle (3pt); 
\filldraw[fill=black, draw=black] (6,3) node[left] {$(b, 2)$} circle (3pt); 
\draw[thick] (6.5, 0)--(7.5, 1.5)--(6,3)--(5, 1.5)--cycle; 

\filldraw[fill=black, draw=black] (9,1.5) node[right] {$(c, 1)$} circle (3pt); 
\draw[thick] (6.5, 0)--(9, 1.5)--(6,3); 

%\filldraw[fill=black, draw=black] (6.2,0.3) circle (3pt); 
\filldraw[fill=black, draw=black] (5.9,0.6) circle (3pt); 
%\filldraw[fill=black, draw=black] (5.6,0.9) circle (3pt); 
%\filldraw[fill=black, draw=black] (5.3,1.2) circle (3pt); 

%\filldraw[fill=black, draw=black] (7.2,1.8) circle (3pt); 
\filldraw[fill=black, draw=black] (6.9,2.1) circle (3pt); 
%\filldraw[fill=black, draw=black] (6.6,2.4) circle (3pt); 
%\filldraw[fill=black, draw=black] (6.3,2.7) circle (3pt); 

%\draw[thick] (6.2, 0.3)--(7.2, 1.8); 
\draw[thick] (5.9, 0.6) node [left]{{\footnotesize $(a, t)$}} --(6.9, 2.1) node[left] {{\footnotesize $(b, 1+t)$}}; 
%\draw[thick] (5.6, 0.9)--(6.6, 2.4); 
%\draw[thick] (5.3, 1.2)--(6.3, 2.7); 

\end{tikzpicture}
\caption{$X=\{a, b, c\}$ and poset structure of 
$\Cau_{\ess}^\ell(X; a, b)$ and $\Cau^\ell(X; a, b)$}
\label{fig:K3}
\end{figure}
In this case, $\Delta\Cau_{\ess}^2(X; a, b)$ is the $2$-simplex with vertices 
$(a, 0), (c, 1)$ and $(b, 2)$ 
and 
$\Deltadot\Cau_{\ess}^2(X; a, b)$ is its boundary. Hence $\calM^\ell(X; a, b)\approx S^2$. 
See also Example \ref{ex:graphs}. 
\end{example}

\begin{remark}
\label{rem:advantage}
The smaller model (using the essential subsets $\Cau_{\ess}^\ell(X)$) of 
$\calM^\ell(X)$ is useful for specific computations. 
In the next section (\S \ref{sec:var}), we will also investigate the relationship between 
the magnitude homotopy type and further smaller model constructed by 
Asao-Izumihara \cite{AI} for graphs. 

There are several advantages of the bigger model 
(using $\Cau^\ell(X)$) of $\calM^\ell(X)$. 
Firstly, the definition becomes simpler. 
This advantage is not limited to its mere simplicity, 
but also holds essential significance. 
Indeed, the comparison with subspace is easier. For example, 
let $A\subset X$ be a subset of a metric space $X$. 
We consider $A$ as a metric 
space by the induced metric. Let $a, b\in A$, then, 
\begin{equation}
\Cau^\ell(X; a, b)\cap (A\times [0, \ell])=
\Cau^\ell(A; a, b) 
\end{equation}
holds. However, for $\Cau_{\ess}^\ell(X; a, b)$, 
\begin{equation}
\Cau_{\ess}^\ell(X; a, b)\cap (A\times [0, \ell])\supsetneq
\Cau_{\ess}^\ell(A; a, b), 
\end{equation}
in general (see Example \ref{ex:Cau} for explicit examples). 
This makes it more lengthy to 
write down the proof of several results 
(e.g., Mayer-Vietoris formula \S \ref{sec:mv}) 
using the smaller model.  It may be possible to compare 
the relationship between 
larger and smaller models with the relationship between 
singular chain complex of a 
topological space and finite chain complexes of 
finite triangulated space. 
\end{remark}

\subsection{Relation with Asao-Izumihara type complex}
\label{sec:var}

Asao-Izumihara \cite{AI} constructed a pair of simplicial complexes $K_\ell(G; a, b)$ 
and $K'_\ell(G; a, b)$ for a graph $G$ and $\ell\in\Z_{>0}$, and proved that 
\begin{equation}
\label{eq:AIisom}
\MH_k^\ell(X; a, b)\cong
H_{k-2}(|K_\ell(X; a, b)|, |K'_\ell(X; a, b)|) 
\end{equation}
for $k\geq 3$. They also describe the case $k=2$ by dividing into cases 
$d(a, b)=\ell$ and $d(a, b)<\ell$. In this section, we will prove that 
the isomorphism (\ref{eq:AIisom}) holds for any metric space and for $k\geq 0$ 
(Theorem \ref{thm:double}). 
To do this, we have to take into account the distinction between 
the empty simplicial complex and the void. 

Let $X$ be a metric space, $a, b\in X$, and $\ell>0$. 
Let 
\begin{equation}
\calK^\ell(X; a, b):=\Cau_{\ess}^\ell(X; a, b)\smallsetminus\{ (a, 0), (b, \ell) \}
\end{equation}
be the subposet of $\Cau_{\ess}^\ell(X; a, b)$ consisting of points $(x, t)$ strictly 
between $(a, 0)$ and $(b, \ell)$. Let us denote the order complex $\Delta\calK^\ell(X; a, b)$ 
by $K_\ell(X; a, b)$, namely, 
\begin{equation}
K_\ell(X; a, b)=\{
((x_0, t_0)<\cdots <(x_k, t_k))\mid (x_i, t_i)\in\calK^\ell(X; a, b)\}. 
\end{equation}
This set can be empty by various reasons. If the defining relations are 
inconsistent with axioms of metric space, then we suppose $K_\ell(X; a, b)$ is 
the void. More precisely, 
\begin{equation}
K_\ell(X; a, b)=\void, \mbox{ if }
d(a, b)>\ell. 
\end{equation}
Otherwise, if $d(a, b)\leq\ell$ and $K_\ell(X; a, b)$ is empty, then 
set $K_\ell(X; a, b)=\emptycpx$. 
As in Definition \ref{def:Deltadot} define the subcomplex 
$K'_\ell(X; a, b)=\Deltadot \calK^\ell(X; a, b)$ 
of $K_\ell(X; a, b)$ as 
\begin{equation}
K'_\ell(X; a, b):=
\{((x_0, t_0), \dots, (x_k, t_k))\in K_\ell(X; a, b)\mid 
d(a, x_0, x_1, \dots x_k, b)<\ell\}. 
\end{equation}
We also pose the assumption, 
\begin{equation}
K'_\ell(X; a, b)=\void, \mbox{ if }
d(a, b)\geq\ell. 
\end{equation}
Otherwise, if $d(a, b)<\ell$ and $K'_\ell(X; a, b)$ is empty, then 
set $K'_\ell(X; a, b)=\emptycpx$. 

\begin{theorem}
\label{thm:double}
Let $X$ be a metric space, $a, b\in X$, and $\ell>0$. Suppose $d(a, b)\leq\ell$. 
% and $\Cau_{\ess}^\ell(X; a, b)\neq\emptyset$. 
Then the pair 
$(|\Delta\Cau_{\ess}^\ell(X; a, b)|, |\Deltadot\Cau_{\ess}^\ell(X; a, b)|)$ is homotopy equivalent 
to the double suspension 
\[
%(\calM^\ell(X; a, b), *)\simeq
\Sus^2\left(|K_\ell(X; a, b)|, |K'_\ell(X; a, b)|\right). 
\]
In particular, 
\begin{equation}
\MH^\ell_k(X; a, b)\cong H_{k-2}\left(|K_\ell(X; a, b)|, |K'_\ell(X; a, b)|\right), 
\end{equation}
for all $k\geq 0$. 
\end{theorem}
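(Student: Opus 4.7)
The plan is to establish an \emph{equality} of pairs of simplicial complexes
\[
(\Delta\Cau_{\ess}^\ell(X;a,b),\ \Deltadot\Cau_{\ess}^\ell(X;a,b)) \ =\ \Sus^2(K_\ell(X;a,b),\ K'_\ell(X;a,b))
\]
directly from the definitions, after which the homology statement is immediate from Theorem \ref{thm:mainisom} and the suspension isomorphism $H_k(\Sus^2(Y,K'))\cong H_{k-2}(Y,K')$. Write $\alpha=(a,0)$ and $\beta=(b,\ell)$, and $Y=K_\ell(X;a,b)=\Delta\calK^\ell(X;a,b)$. Since $d(a,b)\leq\ell$, the poset $\Cau_{\ess}^\ell(X;a,b)$ is nonempty and has minimum $\alpha$ and maximum $\beta$ (any essential point lies between them). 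Every chain of $\Cau_{\ess}^\ell(X;a,b)$ is obtained by freely adjoining $\alpha$ and/or $\beta$ to a chain of $\calK^\ell(X;a,b)$, giving the simplicial join decomposition
\[
\Delta\Cau_{\ess}^\ell(X;a,b)\ =\ \{\alpha\}*Y*\{\beta\}\ =\ \Cone_\alpha\Cone_\beta Y.
\]

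The second step is to classify the cells of $\Deltadot\Cau_{\ess}^\ell$. A typical simplex has the form $\epsilon_\alpha\{\alpha\}\cup\tau\cup\epsilon_\beta\{\beta\}$ with $\epsilon_\alpha,\epsilon_\beta\in\{0,1\}$ and $\tau\in Y$ (possibly empty). By Proposition \ref{prop:can}, membership in $\Deltadot\Cau_{\ess}^\ell$ is equivalent to the underlying sequence failing to be light-like. A light-like sequence in $\Cau_{\ess}^\ell(X;a,b)$ must have $t_0=0$ and $t_n=\ell$, which by the causal ordering forces its endpoints to be $\alpha$ and $\beta$. Hence every simplex with $\epsilon_\alpha=0$ or $\epsilon_\beta=0$ lies in $\Deltadot\Cau_{\ess}^\ell$. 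For $\epsilon_\alpha=\epsilon_\beta=1$, the condition becomes $d(a,x_0,\ldots,x_n,b)<\ell$, which is precisely $\tau\in K'_\ell(X;a,b)$. This yields
\[
\Deltadot\Cau_{\ess}^\ell(X;a,b)\ =\ \bigl(\{\alpha\}*Y\bigr)\,\cup\,\bigl(Y*\{\beta\}\bigr)\,\cup\,\bigl(\{\alpha\}*K'_\ell(X;a,b)*\{\beta\}\bigr).
\]

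The third step is to recognize the right-hand side as $\Sus^2(Y,K'_\ell(X;a,b))$. Unfolding the definition from Remark \ref{rem:void} twice, with apices $\beta$ and then $\alpha$,
\[
\Sus^2(Y,K') = \bigl(\Cone_\alpha\Cone_\beta Y,\ \Cone_\alpha\Cone_\beta K'\,\cup\,\Cone_\alpha Y\,\cup\,\Cone_\beta Y\bigr),
\]
and using $\Cone_c X=\{c\}*X$ this matches our pair on the nose. Combining Theorem \ref{thm:mainisom} with the iterated suspension isomorphism of relative homology then produces the claimed isomorphism $\MH^\ell_k(X;a,b)\cong H_{k-2}(|K_\ell(X;a,b)|,|K'_\ell(X;a,b)|)$ for all $k\geq 0$.

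The main obstacle is reconciling the conventions on $\emptycpx$ and $\void$ across the subcases. When $d(a,b)=\ell$, every essential point lies in the interval $I[a,b]$ with $t=d(a,x)$, so every chain in $\Cau_{\ess}^\ell(X;a,b)$ joining $\alpha$ and $\beta$ is automatically light-like; correspondingly $K'_\ell=\void$ and the identity $\Cone_c(\void)=\void$ ensures the term $\{\alpha\}*K'_\ell*\{\beta\}$ contributes nothing, consistent with the classification in the second step. When $d(a,b)<\ell$ and $\calK^\ell(X;a,b)$ happens to be empty, the convention $K_\ell=\emptycpx$ gives $\Cone_\alpha\Cone_\beta\emptycpx=\{\alpha,\beta\}$, which correctly recovers the single edge between $\alpha$ and $\beta$ in $\Cau_{\ess}^\ell(X;a,b)$ (it has distance $d(a,b)<\ell$ and so lies in $\Deltadot$). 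Once one verifies that each such boundary convention is exactly the one forced by the join/cone formalism, the rest of the proof is essentially a bookkeeping exercise.
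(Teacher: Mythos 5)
Your proposal is correct and follows essentially the same route as the paper: the paper also identifies $\Delta\Cau_{\ess}^\ell(X;a,b)$ as an iterated cone on $K_\ell(X;a,b)$ and matches the subcomplex with the one prescribed by the definition (\ref{eq:cone}) of $\Sus$, except that it performs the two cone steps one at a time via the intermediate poset $\calP=\calK^\ell(X;a,b)\cup\{(a,0)\}$ rather than all at once, and it treats the degenerate cases by inspection rather than by your uniform bookkeeping. One small inaccuracy in your last paragraph: when $d(a,b)<\ell$ and $\calK^\ell(X;a,b)=\emptyset$ there is \emph{no} edge between $\alpha$ and $\beta$ in $\Delta\Cau_{\ess}^\ell(X;a,b)$ --- indeed $\Cau_{\ess}^\ell(X;a,b)$ is empty, since without intermediate essential points no light-like sequence of length $\ell>d(a,b)$ exists and hence not even $(a,0)$ is essential --- so the claimed on-the-nose equality of pairs fails there; but both pairs have contractible quotient (the paper notes the same subcase with both complexes empty), so the asserted homotopy equivalence and the homology isomorphism are unaffected.
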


\begin{proof}%[Proof of Theorem \ref{thm:double}]
First, we consider the case $K_\ell(X; a, b)$ (equivalently, 
$\calK^\ell(X; a, b)$) 
is empty. If $\ell=d(a, b)$, then by assumption, $K_\ell(X; a, b)=\emptycpx$ and 
$K'_\ell(X; a, b)=\void$. Then using (\ref{eq:emptyvoid}), 
the double suspension 
is $\Sus^2(\emptycpx, \void)=([0, 1], \{0, 1\})$. 
On the other hand, since 
$\Cau_{\ess}^\ell(X; a, b)=\{(a, 0), (b, \ell)\}$, 
the pair $(|\Delta\Cau_{\ess}^\ell(X; a, b)|, 
|\Deltadot\Cau_{\ess}^\ell(X; a, b)|)$ 
is also homeomorphic to $([0, 1], \{0, 1\})$. 
If $\ell>d(a, b)$, then by assumption, 
$K_\ell(X; a, b)=K'_\ell(X; a, b)=\emptycpx$. 
Furthermore, both $\Delta\Cau_{\ess}^\ell(X; a, b)$ and 
$\Deltadot\Cau_{\ess}^\ell(X; a, b)$ 
are also empty. 

Now we suppose $K_\ell(X; a, b)$ (equivalently, 
$\calK^\ell(X; a, b)$) is nonempty. 
Then $\Cau_{\ess}^\ell(X; a, b)$ contains $(a, 0)$ and $(b, \ell)$. 
Define an intermediate poset 
$\calK^\ell(X; a, b)\subset\calP\subset\Cau_{\ess}^\ell(X; a, b)$ 
as 
\begin{equation}
\label{eq:intermP}
\calP=\calK^\ell(X; a, b)\cup\{(a, 0)\}=
\Cau_{\ess}^\ell(X; a, b)\smallsetminus\{(b, \ell) \}. 
\end{equation}
(From this point, for simplicity, we will write 
$\calK^\ell=\calK^\ell(X; a, b), K_\ell=K_\ell(X; a, b)$, etc.) 
Define the subcomplex $\Deltadot\calP$ of the order complex 
$\Delta\calP$ as 
\begin{equation}
\label{eq:intermPdot}
\Deltadot\calP=\{((x_0, t_0)<\cdots <(x_k, t_k))\in\Delta\calP\mid 
d(x_0, \dots, x_k, b)<\ell\}. 
\end{equation}
We will prove that the pair $(|\Delta\calP|, |\Deltadot\calP|)$ 
is homeomorphic to the reduced 
suspension of $(|K_\ell|, |K'_\ell|)$. 
Since $\calP$ is obtained by adding the minimum element to 
$\calK^\ell$, the order 
complex is the cone, i.e., $|\Delta\calP|=\Cone_\alpha(|K_\ell|)$, 
with the apex 
$\alpha=(a, 0)$. By definition (\ref{eq:cone}), we have 
\begin{equation}
\label{eq:DeltaP}
\Sus(|K_\ell|, |K'_\ell|)=(|\Delta\calP|, \Cone_\alpha(|K'_\ell|)\cup |K_\ell|). 
\end{equation}
It is straightforward that $\Cone_\alpha(|K'_\ell|)\cup |K_\ell|$ is 
exactly equal to $\Deltadot\calP$. Thus we have 
$\Sus(|K_\ell|, |K'_\ell|)=( |\Delta\calP|, |\Deltadot\calP| )$. 

Since $\Cau_{\ess}^\ell(X; a, b)=\calP\cup\{(b, \ell)\}$ is obtained by adding the maximum 
element to $\calP$, we can prove that the pair 
$( |\Delta\Cau_{\ess}^\ell(X; a, b)|, |\Deltadot \Cau_{\ess}^\ell(X; a, b)| )$ is 
homeomorphic to $\Sus( |\Delta\calP|, |\Deltadot\calP| )$ in a similar way. 
\end{proof}

\begin{example}
Let $G_1$ be the complete graph with the vertex set $\{a, b, c\}$ as in Figure \ref{fig:K3}. 
Let $G_2$ be the tree with the vertex set $\{a, b, c\}$ and edges $(ac)$ and $(bc)$. 
Note that $d_{G_2}(a, b)=2$. Let $\ell=2$. Then both $K_2(G_1; a, b)$ and $K_2(G_2; a, b)$ 
consist of one point $\{(c, 1)\}$ (see Example \ref{ex:Cau} for $G_1$). 

Now we consider $K'_2$. Since $d_{G_1}(a, b)<\ell$, $K'_2(G_1; a, b)=\emptycpx$. 
On the other hand, since $d_{G_2}(a, b)=\ell$, $K'_2(G_2; a, b)=\void$. Thus we have 
\[
\begin{split}
(K_2(G_1), K'_2(G_1)) &= (*, \emptycpx), \\
(K_2(G_2), K'_2(G_2)) &= (*, \void). 
\end{split}
\]
The double suspensions becomes, 
\[
\begin{split}
&\Sus^2 (*, \emptycpx)=\Sus(S^1, *)\simeq (S^2, *), \\
&\Sus^2 (*, \void)=\Sus([0, 1], \{0\})\simeq (*, *). 
\end{split}
\]
The former space has non-zero second homology (Example \ref{ex:graphs} (3)), while 
the latter space has vanishing homology group. 
\end{example}

Recall that a simplicial complex is called \emph{pure} if 
all of its maximal simplices have the same dimension. For example, triangulations of 
topological manifolds are pure. 
We also recall that a \emph{weighted graph} is a 
graph $G=(V, E)$ equipped with the edge length function 
$\rho:E\longrightarrow\R_{>0}$. Let $G$ be a connected weighted 
graph with edge lengths bounded below, that is, there exists 
$\varepsilon_0>0$ such that 
$\rho(v_1, v_2)\geq\varepsilon_0$ for any edge $(v_1, v_2)\in E$. 
Then the weighted graph determines 
a metric $d: V\times V\longrightarrow\R_{\geq 0}$ defined 
by the infimum of the lengths of paths connecting two vertices. 

By generalizing the construction in \cite[Corollary 5.12]{KY}, 
we have the following. 
\begin{corollary}
\label{cor:arbitrary}
$(i)$ 
Let $X$ be a metric space, and $a, b\in X$. Then 
\begin{equation}
\label{eq:intervalSus}
\calM^{d(a, b)}(X; a, b)\simeq\Sus^2|\Delta I(a, b)|. 
\end{equation}

$(ii)$ 
Let $Z$ be a finite dimensional simplicial complex. 
Then there exists a weighted graph $G$, vertices $a$ and $b$, 
and $\ell\geq 0$ such that 
\begin{equation}
%\label{eq:intervalSus}
\label{eq:arbitrarydouble}
\Sus^2 |Z|\simeq\calM^\ell (G; a, b). 
\end{equation}

$(iii)$ 
Let $Z$ be a finite dimensional pure simplicial complex. 
Then there exists a graph 
(with edge length $1$) 
$G=(V, E)$, two vertices $a, b\in V$, and $\ell\geq 0$ satisfying 
(\ref{eq:arbitrarydouble}). 
Furthermore, if $Z$ is a finite simplicial complex, we can take 
$G$ to be a finite graph. 
\end{corollary}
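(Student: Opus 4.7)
The plan is to prove (i) first as a direct specialization of Theorem \ref{thm:double}, and then reduce (ii) and (iii) to (i) by constructing a (weighted) graph whose open interval $I(a,b)$ realizes the face poset of the given simplicial complex.

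For part (i), I would apply Theorem \ref{thm:double} at $\ell = d(a,b)$. Under the correspondence $x \mapsto (x, d(a,x))$ from Proposition \ref{prop:kernel}, every point of $I[a,b]$ already lies on the light-like sequence $(a, x, b)$ of length $d(a,b)$, so $\Cau_{\ess}^{d(a,b)}(X;a,b) = \Cau^{d(a,b)}(X;a,b) \cong I[a,b]$. This identifies $\calK^\ell(X;a,b)$ with $I(a,b)$, and hence $K_{d(a,b)}(X;a,b) = \Delta I(a,b)$ (or $\emptycpx$ when the open interval is empty). Because $d(a,b) = \ell$, the convention preceding Theorem \ref{thm:double} forces $K'_{d(a,b)}(X;a,b) = \void$. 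Applying Theorem \ref{thm:double} and then passing to pointed quotients, using $\Cone_\alpha(X)/X \simeq \Sus X$ from (\ref{eq:susp}) iterated twice, yields $\calM^{d(a,b)}(X;a,b) \simeq \Sus^2|\Delta I(a,b)|$.

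For part (ii), I would construct a weighted graph $G$ whose vertex set is $\{a,b\} \sqcup \{w_\sigma : \sigma \in F(Z)\}$, where $F(Z)$ denotes the poset of nonempty faces of $Z$, and whose edges are
\begin{itemize}
\item $a \sim w_\sigma$ of weight $\dim\sigma + 1$,
\item $w_\sigma \sim b$ of weight $\dim Z + 1 - \dim\sigma$,
\item $w_\sigma \sim w_\tau$ of weight $\dim\tau - \dim\sigma$ whenever $\sigma \subsetneq \tau$.
\end{itemize}
A direct distance check gives $d(a,w_\sigma) = \dim\sigma + 1$, $d(w_\sigma,b) = \dim Z + 1 - \dim\sigma$, and $d(a,b) = \dim Z + 2$, and moreover $w_\sigma \leq w_\tau$ in $I[a,b]$ if and only if $\sigma \subseteq \tau$. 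Hence $\Delta I(a,b) = \Delta F(Z)$ is the barycentric subdivision of $Z$ and $|\Delta I(a,b)|$ is homeomorphic to $|Z|$. Part (i) applied to $\ell = d(a,b) = \dim Z + 2$ then delivers (\ref{eq:arbitrarydouble}). For part (iii), I would repeat the same construction but retain only edges $a \sim w_v$ for vertices $v$ of $Z$, edges $w_\sigma \sim w_\tau$ of unit weight when $\dim\tau = \dim\sigma + 1$ and $\sigma \subsetneq \tau$, and unit-weight edges $w_\tau \sim b$ for maximal (hence, by purity, $\dim Z$-dimensional) faces $\tau$. Purity guarantees that every face lies in some top-dimensional face, so the same distance formulas hold, and finiteness of $Z$ yields finiteness of $G$.

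The main technical obstacle is the distance bookkeeping for the graph constructions of (ii) and (iii): one must systematically rule out alternative geodesics such as $w_\sigma \to a \to w_\tau$, $w_\sigma \to b \to w_\tau$, or non-monotone detours $w_\sigma \to w_\rho \to w_\tau$ through incomparable faces, confirming in each case that the path is strictly longer than $\dim\tau - \dim\sigma$ whenever $\sigma \not\subseteq \tau$, so that the poset isomorphism $I(a,b) \cong F(Z)$ really holds. A minor ancillary subtlety in (i) is handling the edge case $I(a,b) = \emptyset$, where the conventions of Remark \ref{rem:void} around $\void$ and $\emptycpx$ must be invoked to give $\calM^{d(a,b)}(X;a,b) \simeq S^1$ consistently.
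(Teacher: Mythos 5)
Your proposal is correct and follows essentially the same route as the paper: part (i) is Theorem \ref{thm:double} applied with $\calK^{d(a,b)}(X;a,b)\cong I(a,b)$ and $K'_{d(a,b)}=\void$, and parts (ii)--(iii) realize the extended face poset $\widehat{\calF}(Z)$ as a weighted graph with endpoints $\widehat{0},\widehat{1}$ so that $\Delta I(a,b)$ is the barycentric subdivision of $Z$. The only difference is cosmetic: the paper weights the Hasse diagram (unit edges except for weight $n+1-\dim\sigma$ on edges from maximal faces to $\widehat{1}$), while you take the full comparability graph with rank-difference weights plus edges from $a$ and to $b$ at every face; both induce the same metric, and your flagged distance check does close, since every edge weight equals the difference of the rank function $f(w_\sigma)=\dim\sigma+1$, so $f$ is $1$-Lipschitz and any geodesic realizing $\dim\tau-\dim\sigma$ must be an $f$-monotone chain, forcing $\sigma\subseteq\tau$.
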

\begin{proof}
$(i)$ 
When $\ell=d(a, b)$, the poset  $\calK^\ell(X; a, b)$ is isomorphic to 
the interval $I(a, b)$. Hence we have the isomorphism of 
simplicial complexes $K_\ell(X; a, b)\cong \Delta I(a, b)$. 
By Theorem \ref{thm:double} and Remark \ref{rem:void}, 
we obtain (\ref{eq:intervalSus}). 

$(ii)$ 
Suppose $Z$ is $n$-dimensional. 
Let $\calF(Z)$ be the face poset of $Z$, 
that is the poset consisting of nonempty simplices. 
We extend the face poset by adding 
the minimum element $\widehat{0}$ and the maximum element 
$\widehat{1}$ to get the \emph{extended face poset} 
$\widehat{\calF}(Z)=\calF(Z)\sqcup\{\widehat{0}, \widehat{1}\}$. 
We consider the Hasse diagram of $\widehat{\calF}(Z)$ as a 
weighted graph in the following way. The length of the edge 
between a maximal simplex $\sigma$ and $\widehat{1}$ is 
\[
\rho(\sigma, \widehat{1})=n+1-\dim\sigma, 
\]
and other edges have length $1$ (Figure \ref{fig:wtdgraph}). 
Then, for $\ell=d(\widehat{0}, \widehat{1})=n+2$ and 
$(X; a, b)=(\widehat{\calF}(Z); \widehat{0}, \widehat{1})$, 
$K_\ell(X; a, b)$ 
is the barycentric subdivision of $Z$ and 
$K'_\ell=\void$. Hence 
by Theorem \ref{thm:double} and Remark \ref{rem:void}, 
we obtain (\ref{eq:arbitrarydouble}). 

\begin{figure}[htbp]
\centering
\begin{tikzpicture}%[scale=1.2]

%\draw [help lines] (0,0) grid (9,5);%(0)から(10,4)までの"細線の方眼"

\coordinate (a1) at (0,0);
\coordinate (b1) at (1,0);
\coordinate (c1) at (2,0);
\coordinate (d1) at (1.5,0.8);

\draw (a1)--(b1)--(c1)--(d1)--(b1);

\filldraw[fill=black, draw=black] (a1) node[below] {$a$} circle (2pt);
\filldraw[fill=black, draw=black] (b1) node[below] {$b$} circle (2pt);
\filldraw[fill=black, draw=black] (c1) node[below] {$c$} circle (2pt);
\filldraw[fill=black, draw=black] (d1) node[above] {$d$} circle (2pt);

\filldraw[fill=gray!20!white, draw=black, thick] (b1)--(c1)--(d1)--cycle;

\coordinate (a) at (4,0);
\coordinate (b) at (5,0);
\coordinate (c) at (6,0);
\coordinate (d) at (7,0);

\coordinate (ab) at (4,1);
\coordinate (bc) at (5,1);
\coordinate (bd) at (6,1);
\coordinate (cd) at (7,1);

\coordinate (bcd) at (6,2);

\coordinate (P0) at (5.5,-1);
\coordinate (P1) at (5.5,3);

\filldraw[fill=black, draw=black] (a) node[left] {$a$} circle (2pt);
\filldraw[fill=black, draw=black] (b) node[left] {$b$} circle (2pt);
\filldraw[fill=black, draw=black] (c) node[right] {$c$} circle (2pt);
\filldraw[fill=black, draw=black] (d) node[right] {$d$} circle (2pt);

\filldraw[fill=black, draw=black] (ab) node[left] {$ab$} circle (2pt);
\filldraw[fill=black, draw=black] (bc) node[left] {$bc$} circle (2pt);
\filldraw[fill=black, draw=black] (bd) node[right] {$bd$} circle (2pt);
\filldraw[fill=black, draw=black] (cd) node[right] {$cd$} circle (2pt);
\filldraw[fill=black, draw=black] (bcd) node[right] {$bcd$} circle (2pt);

\filldraw[fill=black, draw=black] (P0) node[below] {$\widehat{0}$} circle (2pt);
\filldraw[fill=black, draw=black] (P1) node[above] {$\widehat{1}$} circle (2pt);

\draw (P0)--(a);
\draw (P0)--(b);
\draw (P0)--(c);
\draw (P0)--(d);
\draw (a)--(ab)--(b)--(bc)--(c)--(cd)--(d)--(bd)--(b);

\draw (ab)--node[left]{$2$} (P1);
\draw (bc)--(bcd);
\draw (bd)--(bcd);
\draw (cd)--(bcd);
\draw (P1)--(bcd);

\end{tikzpicture}
\caption{A (non-pure) simplicial complex $Z$ and 
associated weighted graph $\widehat{\calF}(Z)$.}
\label{fig:wtdgraph}
\end{figure}
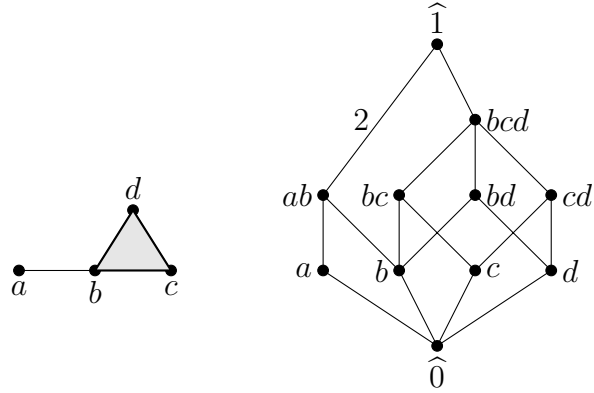

$(iii)$ 
If $Z$ is pure, the weighted graph constructed in the proof of 
$(ii)$ has edge length $1$. 
\end{proof}

As above, we can construct (the double suspension of) 
arbitrary space as the 
magnitude homotopy type with fixed terminal points 
$\calM^\ell(G; a, b)$. 
It is more difficult to control the total magnitude homotopy type 
$\calM^\ell(X)$. 
However, we can prove the following.

\begin{proposition}
\label{prop:disjoint}
Let $X$ be a metric space. Let $\ell\geq 0$. 
Suppose there exist subsets 
$Y_1, Y_2$ such that $X=Y_1\sqcup Y_2$ and 
$d(y_1, y_2)>\ell$ for any $y_1\in Y_1$ and $y_2\in Y_2$. 
Then, 
$\calM^\ell(X)=\calM^\ell(Y_1)\vee\calM^\ell(Y_2)$. 
\end{proposition}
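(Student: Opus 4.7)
The plan is to exploit the wedge decomposition $\calM^\ell(X) = \bigvee_{a, b \in X} \calM^\ell(X; a, b)$ established just after Definition \ref{def:mghtpy}, and to analyze each summand according to whether the endpoints lie in the same piece $Y_i$ or in different pieces.

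First I would handle the ``cross'' summands. Suppose $a \in Y_1$ and $b \in Y_2$ (or vice versa). Then $d(a, b) > \ell$, so the causal interval $\Cau^\ell(X; a, b)$ is empty (nothing $(x, t) \in X \times [0, \ell]$ can satisfy both $d(a, x) \leq t$ and $d(x, b) \leq \ell - t$, since adding the inequalities yields $d(a, b) \leq \ell$). By the convention recorded in Remark \ref{rem:empty}, this means $\calM^\ell(X; a, b) = \{*_0\}$ is the trivial pointed space, and hence contributes nothing to the wedge.

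Next I would treat the ``diagonal'' summands, say $a, b \in Y_1$. The key claim is that $\Cau^\ell(X; a, b) = \Cau^\ell(Y_1; a, b)$ as posets. Indeed, any point $(x, t) \in \Cau^\ell(X; a, b)$ satisfies $d(a, x) + d(x, b) \leq \ell$, so $d(a, x) \leq \ell$; if we had $x \in Y_2$ this would contradict the hypothesis $d(a, x) > \ell$. Therefore $x \in Y_1$, and the defining causal inequalities in $X$ reduce to the same inequalities in $Y_1$ (the induced metric on $Y_1$ agrees with $d|_{Y_1 \times Y_1}$). Consequently the order complex and its distinguished subcomplex coincide with those computed inside $Y_1$, so $\calM^\ell(X; a, b) = \calM^\ell(Y_1; a, b)$. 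The same argument applies with $Y_2$ in place of $Y_1$.

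Finally I would assemble the pieces: combining the wedge decomposition with the two observations above gives
\begin{equation*}
\calM^\ell(X) = \bigvee_{a, b \in Y_1} \calM^\ell(Y_1; a, b) \;\vee\; \bigvee_{a, b \in Y_2} \calM^\ell(Y_2; a, b) = \calM^\ell(Y_1) \vee \calM^\ell(Y_2),
\end{equation*}
where the cross terms have been dropped since they are basepoints. There is no real obstacle here; the only subtle point is making sure that the containment $\Cau^\ell(X; a, b) \subseteq Y_1 \times [0, \ell]$ is genuinely strict enough to identify the causal posets (not just up to homotopy), which the triangle-inequality argument settles cleanly.
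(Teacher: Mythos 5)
Your proof is correct and takes essentially the same approach as the paper's (very terse) one-line proof, which just notes that any sequence of length $\ell$ must stay inside a single $Y_i$; you simply flesh this out by splitting into cross vs.\ diagonal summands and showing that the full causal posets $\Cau^\ell(X;a,b)$ coincide with those of the relevant $Y_i$, which cleanly yields the stated equality.
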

\begin{proof}
By assumption, any sequence of length $\ell$ is 
contained in either $Y_1$ or $Y_2$. 
Thus the magnitude homotopy type decomposes. 
\end{proof}

\begin{proposition}
\label{prop:const}
Let $Z$ be a finite dimensional pure simplicial complex. 
Then there exists a metric space $X$ such that 
\[
\calM^1(X)\simeq\Sus^2|Z|\vee\Sus^2|Z|. 
\]
(Note that $\ell=1$ here.) 
Furthermore, if $Z$ is a finite simplicial complex, then we can take $X$ to be 
a finite metric space. 
\end{proposition}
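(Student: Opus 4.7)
The plan is to construct $X$ as a carefully weighted version of the Hasse diagram of the extended face poset $\widehat{\calF}(Z)=\{\widehat 0\}\sqcup\calF(Z)\sqcup\{\widehat 1\}$ used in Corollary~\ref{cor:arbitrary}(iii). Setting $n=\dim Z$, I would rank the elements of $\widehat{\calF}(Z)$ so that $\widehat 0$ has rank $-1$, each $k$-simplex has rank $k$, and $\widehat 1$ has rank $n+1$, and assign to every edge of the Hasse diagram joining a rank-$(k-1)$ element to a rank-$k$ element the common length $\ell_k$. I would then choose positive reals $\ell_0,\ldots,\ell_{n+1}$ with $\sum_{k=0}^{n+1}\ell_k=1$ and with no $\Q$-linear relations other than multiples of this one (for instance, take $\ell_0,\ldots,\ell_n$ to be algebraically independent positive transcendentals whose sum is less than $1$ and set $\ell_{n+1}=1-\sum_{k<n+1}\ell_k$). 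Let $X=\widehat{\calF}(Z)$ equipped with the shortest-path metric of the resulting weighted graph, so that $d(\widehat 0,\widehat 1)=1$, and $X$ is finite whenever $\calF(Z)$ is finite.

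The crucial step is classifying all light-like sequences of length $1$ in $X$. For such a sequence $(y_0,\ldots,y_m)$, write $\sum_{j=1}^m d(y_{j-1},y_j)=\sum_{k=0}^{n+1}N_k\ell_k$, where $N_k$ counts the total number of level-$k$ edges used across the shortest paths realizing the consecutive distances. Comparing with $1=\sum_k\ell_k$ and using $\Q$-linear independence forces $N_k=1$ for every $k$. Since a shortest path between two incomparable elements must descend to their meet and reascend, doubling through every level strictly between the meet and $\min(\mathrm{rank}(y_{j-1}),\mathrm{rank}(y_j))$, no consecutive pair can be incomparable. The sequence of ranks therefore yields a walk in the rank line $-1\!-\!0\!-\!\cdots\!-\!(n+1)$ that crosses each edge exactly once, i.e., an Eulerian path; since a path graph's Eulerian paths run only between its two endpoints, $(y_0,y_m)$ must equal $(\widehat 0,\widehat 1)$ or $(\widehat 1,\widehat 0)$, and the light-like sequences for these pairs are precisely the ascending (respectively descending) chains $\widehat 0<\sigma_0<\cdots<\sigma_k<\widehat 1$ in $\widehat{\calF}(Z)$, each of total length $\sum_k\ell_k=1$.

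From this classification $\calM^1(X;u,v)$ is trivial for every $(u,v)\notin\{(\widehat 0,\widehat 1),(\widehat 1,\widehat 0)\}$. For $(\widehat 0,\widehat 1)$, the essential subposet of Definition~\ref{def:essential} is isomorphic to $\calF(Z)$ and every chain in it extends to a light-like sequence, so $K'_1(X;\widehat 0,\widehat 1)=\void$ by the convention of Remark~\ref{rem:void}. Theorem~\ref{thm:double} then gives
\[
\calM^1(X;\widehat 0,\widehat 1)\simeq\Sus^2\bigl(|\Delta\calF(Z)|,\void\bigr)\simeq\Sus^2|Z|,
\]
using that $|\Delta\calF(Z)|$ is the barycentric subdivision of $|Z|$; the same computation applies to $(\widehat 1,\widehat 0)$, and the wedge decomposition of the total magnitude homotopy type yields $\calM^1(X)\simeq\Sus^2|Z|\vee\Sus^2|Z|$. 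The hardest step will be verifying the incomparable-pair argument, which requires a careful case analysis on whether the meet of $y_{j-1}$ and $y_j$ lies in $\calF(Z)$ or equals $\widehat 0$ (and analogously for joins versus $\widehat 1$) to confirm that the shortest path in this weighted metric truly doubles back through some level and hence cannot appear inside a length-$1$ light-like sequence.
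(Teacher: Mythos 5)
Your construction is exactly the paper's: the weighted Hasse diagram of the extended face poset $\widehat{\calF}(Z)$ with level-wise edge lengths that sum to $1$ and admit no other rational representation of $1$, so that the only light-like sequences of length $1$ are chains running from $\widehat{0}$ to $\widehat{1}$ (or back), giving $K_1=\Delta\calF(Z)$, $K'_1=\void$, and the wedge of two double suspensions. The paper merely asserts this classification while you supply the counting argument; the only imprecision is that a shortest path between incomparable elements need not descend to a meet (it may ascend to a common coface, and $\calF(Z)$ need not have meets), but your global count $N_k=1$ for every level already forces the concatenated walk to be rank-monotone, which yields both the comparability of consecutive points and the identification of the endpoints, so the conclusion stands.
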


\begin{proof}
Let $\widehat{\calF}(Z)$ be the Hasse diagram of the extended face poset of $Z$ as 
in the proof of Proposition \ref{cor:arbitrary}. Suppose $n=\dim Z$. Let $Z_i$ ($i=0, \dots, n$) be 
the set of $i$-dimensional simplices. Set $Z_{-1}=\{\widehat{0}\}, Z_{n+1}=\{\widehat{1}\}$. 
Let $\alpha_0, \alpha_1, \dots, \alpha_{n+1}\in\R$ 
be real numbers such that $0<\alpha_i<1, \sum_{i=0}^{n+1}\alpha_i=1$ and 
$\alpha_0, \dots, \alpha_{n+1}$ are linearly independent over $\Q$. 
By the assumption, 
$\sum_{i=0}^{n+1}r_i\alpha_i=1$ ($r_i\in\Q$) implies $r_0=r_1=\dots=r_{n+1}=1$. 
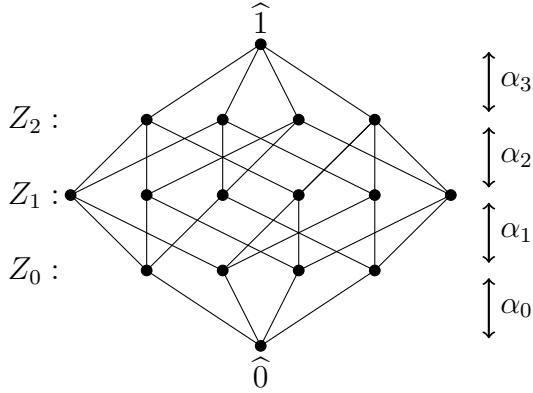
\begin{figure}[htbp]
\centering
\begin{tikzpicture}%[scale=1.2]

%\draw [help lines] (0,0) grid (9,5);%(0)から(10,4)までの"細線の方眼"

\coordinate (a) at (0,0);
\coordinate (b) at (1,0);
\coordinate (c) at (2,0);
\coordinate (d) at (3,0);

\coordinate (ab) at (-1,1);
\coordinate (ac) at (0,1);
\coordinate (ad) at (1,1);
\coordinate (bc) at (2,1);
\coordinate (bd) at (3,1);
\coordinate (cd) at (4,1);

\coordinate (abc) at (0,2);
\coordinate (abd) at (1,2);
\coordinate (acd) at (2,2);
\coordinate (bcd) at (3,2);

\coordinate (P0) at (1.5,-1);
\coordinate (P1) at (1.5,3);

\filldraw[fill=black, draw=black] (a) circle (2pt);
\filldraw[fill=black, draw=black] (b) circle (2pt);
\filldraw[fill=black, draw=black] (c) circle (2pt);
\filldraw[fill=black, draw=black] (d) circle (2pt);

\filldraw[fill=black, draw=black] (ab) circle (2pt);
\filldraw[fill=black, draw=black] (ac) circle (2pt);
\filldraw[fill=black, draw=black] (ad) circle (2pt);
\filldraw[fill=black, draw=black] (bc) circle (2pt);
\filldraw[fill=black, draw=black] (bd) circle (2pt);
\filldraw[fill=black, draw=black] (cd) circle (2pt);

\filldraw[fill=black, draw=black] (abc) circle (2pt);
\filldraw[fill=black, draw=black] (abd) circle (2pt);
\filldraw[fill=black, draw=black] (acd) circle (2pt);
\filldraw[fill=black, draw=black] (bcd) circle (2pt);

\filldraw[fill=black, draw=black] (P0) node[below] {$\widehat{0}$} circle (2pt);
\filldraw[fill=black, draw=black] (P1) node[above] {$\widehat{1}$} circle (2pt);

\draw (P0)--(a);
\draw (P0)--(b);
\draw (P0)--(c);
\draw (P0)--(d);
\draw (P1)--(abc);
\draw (P1)--(abd);
\draw (P1)--(acd);
\draw (P1)--(bcd);

\draw (a)--(ab)--(b)--(bc)--(c)--(ac)--cycle;
\draw (a)--(ad)--(d)--(bd)--(b); %--(ac)--cycle;
\draw (c)--(cd)--(d); %--(bd)--(b); %--(ac)--cycle;

\draw (abd)--(ab)--(abc)--(bc)--(bcd)--(bd)--cycle;
\draw (abc)--(ac)--(acd)--(ad)--(abd); %--(ac)--cycle;
\draw (bc)--(bcd)--(cd)--(acd); %--(bd)--(b); %--(ac)--cycle;

\draw[<->, thick] (4.5,-0.9)-- node[right] {$\alpha_0$} (4.5,-0.1);
\draw[<->, thick] (4.5,0.1)-- node[right] {$\alpha_1$} (4.5,0.9);
\draw[<->, thick] (4.5,1.1)-- node[right] {$\alpha_2$} (4.5,1.9);
\draw[<->, thick] (4.5,2.1)-- node[right] {$\alpha_3$} (4.5,2.9);

\draw (-1, 0) node[left] {$Z_0:$ }; 
\draw (-1, 1) node[left] {$Z_1:$ }; 
\draw (-1, 2) node[left] {$Z_2:$ }; 

\end{tikzpicture}
\caption{A metric on the extended Hasse diagram $\widehat{\calF}(Z)$.}
\label{fig:meticgraph}
\end{figure}
We let the length of the edge between $Z_{i-1}$ and $Z_i$ 
be equal to $\alpha_i$, which makes 
$\widehat{\calF}(Z)$ a metric graph (see Figure \ref{fig:meticgraph}). 
Let $X=\widehat{\calF}(Z)$. 
By the assumption, the light-like paths of length $1$ 
exists only between $\widehat{0}$ and $\widehat{1}$. Thus we have 
\[
\begin{split}
\calM^1(X)
&=\calM^1(X; \widehat{0}, \widehat{1})\vee
\calM^1(X; \widehat{1}, \widehat{0})\\
&\simeq\Sus^2|Z|\vee\Sus^2|Z|. 
\end{split}
\]
\end{proof}

\begin{remark}
\label{rem:stronger}
For a fixed $\ell>0$, 
the magnitude homotopy type is strictly stronger 
than the magnitude homology group. 
Indeed, there exist metric spaces $X_1$ and $X_2$ such that 
$\MH_k^1(X_1)\cong\MH_k^1(X_2)$ for any $k\geq 0$, however, 
$\calM^1(X_1)\not\simeq\calM^1(X_2)$. 
We can construct such spaces as follows. 
Let $Z_1$ and $Z_2$ be CW complexes having 
isomorphic homology groups but not stably homotopy equivalent. 
For example, 
$Z_1=\mathbb{CP}^2$ and $Z_2=S^2\vee S^4$. 
Then $\Sus^2Z_1$ and $\Sus^2Z_2$ are 
not homotopy equivalent. 
Using Corollary \ref{cor:arbitrary} $(ii)$, we can construct 
metric spaces $X_1$ and $X_2$ having required properties. 
\end{remark}
%}

\subsection{K\"unneth formula}
\label{sec:kunneth}

Let $X$ and $Y$ be metric spaces. We define the product metric on $X\times Y$ 
by 
\begin{equation}
d_{X\times Y}((x_1, y_1), (x_2, y_2))=d_X(x_1, x_2)+d_Y(y_1, y_2). 
\end{equation}
The following result was established by Bottinelli-Kaiser 
(and by Hepworth-Willerton for graphs), but only the total magnitude homotopy 
type version was given in their paper. So we will give the version with end points. 
\begin{theorem}
\label{thm:kunneth}
(\cite[Propositoin 8.2]{HW} for graphs, 
\cite[Proposition 4.5]{bot-kai} for metric spaces.) 
Let $X, Y$ be metric spaces and $a, a'\in X$, $b, b'\in Y$. Then 
\begin{equation}
\label{eq:kunnethwedge}
\calM^\ell(X\times Y; (a, b), (a', b'))\approx
\bigvee_{\ell_1+\ell_2=\ell}
(\calM^{\ell_1}(X; a, a')\wedge\calM^{\ell_2}(Y; b, b')), 
\end{equation}
where $\bigvee$ is the wedge sum for all 
$(\ell_1, \ell_2)$ satisfying $\ell_1\geq 0, \ell_2\geq 0$ and 
$\ell_1+\ell_2=\ell$, and $\wedge$ is the smash product of pointed spaces. 
\end{theorem}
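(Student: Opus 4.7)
The plan is to construct an explicit cellular ``Eilenberg--Zilber shuffle map'' realizing the proposed equivalence, and then verify it is a homotopy equivalence by collapsing ``diagonal-step'' cells using discrete Morse theory (Proposition \ref{prop:infinite}). This adapts the total magnitude homotopy type arguments of \cite{HW, bot-kai} to the endpoint setting; the refinement is natural because the shuffle construction is local to each pair of terminal points.

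I first describe the shuffle map. A non-basepoint cell of $\calM^{\ell_1}(X; a, a') \wedge \calM^{\ell_2}(Y; b, b')$ corresponds to a pair of light-like sequences $\bar\sigma = (a = u_0, u_1, \dots, u_p = a')$ of length $\ell_1$ in $X$ and $\bar\tau = (b = v_0, v_1, \dots, v_q = b')$ of length $\ell_2$ in $Y$. For each $(p, q)$-shuffle, namely a monotone lattice path from $(0, 0)$ to $(p, q)$ using only right and up steps, construct a light-like sequence in $X \times Y$ from $(a, b)$ to $(a', b')$ by interleaving, incrementing either the $u$-index or the $v$-index according to the shuffle so that consecutive points differ in only one coordinate. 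The total length in the product metric is $\ell_1 + \ell_2 = \ell$, confirming light-likeness. Assembling over all pairs $(\bar\sigma, \bar\tau)$, over all shuffles, and over all decompositions $\ell = \ell_1 + \ell_2$, this gives a cellular map
\[
\phi \colon \bigvee_{\ell_1 + \ell_2 = \ell} \calM^{\ell_1}(X; a, a') \wedge \calM^{\ell_2}(Y; b, b') \longrightarrow \calM^\ell(X \times Y; (a, b), (a', b')).
\]
Its image is the subcomplex of ``pure-shuffle'' cells, that is, light-like sequences $((a, b) = z_0, \dots, z_n = (a', b'))$ in which each consecutive pair $(z_{i-1}, z_i)$ changes in exactly one of the two coordinates.

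To show $\phi$ is a homotopy equivalence, I construct a bounded acyclic matching on $\Delta\Cau^\ell(X \times Y; (a, b), (a', b'))$ whose critical simplices outside $\Deltadot\Cau^\ell$ are exactly the pure-shuffle light-like sequences. For a causal simplex $\sigma = ((z_0, t_0), \dots, (z_n, t_n))$ with $z_i = (x_i, y_i)$, I find the smallest index $k$ such that either the step $(z_{k-1}, z_k)$ is \emph{diagonal} (both coordinates change), in which case I pair $\sigma \leftrightarrow \sigma \cup \{m_k\}$ with the canonical intermediate vertex $m_k := ((x_k, y_{k-1}), t_{k-1} + d_X(x_{k-1}, x_k))$; or else $z_k$ itself equals the canonical intermediate vertex for a diagonal in $\sigma \smallsetminus \{z_k\}$, in which case I pair $\sigma \leftrightarrow \sigma \smallsetminus \{z_k\}$. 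A simplex admitting neither configuration for any $k$ is critical. One checks that the two rules are mutually consistent, so each matched pair is recognized symmetrically from both sides, and that the matching restricts properly to $\Deltadot\Cau^\ell$ (since splitting a diagonal step into two preserves total length), hence descends to the relative complex.

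The main obstacle is verifying acyclicity and boundedness of the matching in the sense of Definition \ref{def:bdd}. Acyclicity is controlled by the invariant ``first diagonal/canonical-midpoint position'' $k$: along any hypothetical cycle $a_1 \supset b_1 \vdash a_2 \supset \cdots$, this $k$ weakly increases at each $\supset$--$\vdash$ transition and strictly increases across each complete cycle, contradicting $a_{p+1} = a_1$. For boundedness, each transition in a chain (\ref{eq:collapse}) adds or removes a canonical midpoint determined by two vertices already present, so starting from a fixed simplex only finitely many new midpoints ever appear. Once acyclicity and boundedness are verified, Proposition \ref{prop:infinite} yields that $\calM^\ell(X \times Y; (a, b), (a', b'))$ deformation retracts onto the image of $\phi$, which is identified via the shuffle map with the wedge on the right-hand side of (\ref{eq:kunnethwedge}).
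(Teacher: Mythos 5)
Your discrete Morse argument has two concrete problems, and the second one is fatal to the way you close the proof. First, the critical cells of your matching are not the pure-shuffle light-like sequences. Take a pure-shuffle light-like sequence containing an $X$-step immediately followed by a $Y$-step, say $(x,y)\to(x',y)\to(x',y')$; since the sequence is light-like, the middle vertex carries exactly the time $t+d_X(x,x')$, so it \emph{is} the canonical intermediate vertex of the diagonal from $(x,y)$ to $(x',y')$, and your second rule matches this simplex downward. Hence the only critical light-like cells are the extreme shuffles in which every $Y$-step precedes every $X$-step (given your ``$X$-first'' convention for $m_k$). These are in dimension-preserving bijection with the cells of $\bigvee(\calM^{\ell_1}\wedge\calM^{\ell_2})$, so a cell count survives, but your identification of the critical complex with the image of $\phi$ does not. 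Second, and decisively: the critical cells do not form a subcomplex, even after adjoining $\Deltadot\Cau^\ell$. Deleting the corner vertex $(a,b')$ from a critical cell $(a,b)\to(a,v_1)\to\cdots\to(a,b')\to(u_1,b')\to\cdots\to(a',b')$ preserves the total length (the product metric is additive, so $d\bigl((a,v_{q-1}),(u_1,b')\bigr)$ equals the sum of the two deleted steps), so the face is still light-like, contains a diagonal step, and is therefore matched rather than critical. Proposition \ref{prop:infinite} explicitly hypothesizes that the critical cells form a subcomplex, so it cannot be invoked; Theorem \ref{thm:morse}(b) would only tell you the space is homotopy equivalent to \emph{some} CW complex with the right number of cells in each dimension, which does not determine the homotopy type without analyzing the Morse differential and attaching maps. (A smaller point: the statement asserts a homeomorphism $\approx$, which no discrete Morse collapse can produce.)

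For comparison, the paper avoids matchings entirely: it uses the poset map $\Cau^{\ell_1}(a,a')\times\Cau^{\ell_2}(b,b')\to\Cau^{\ell}((a,b),(a',b'))$, $((x,t),(y,t'))\mapsto((x,y),t+t')$, together with the standard fact that $|\Delta(P\times Q)|$ is a subdivision of $|\Delta P|\times|\Delta Q|$, and then passes to quotients; the shuffles appear there only as the simplices of that subdivision, not as critical cells of a collapse. If you want to keep a Morse-theoretic route, you would need either a matching whose critical cells genuinely form a subcomplex, or an explicit computation of the resulting Morse complex, and you should state the conclusion as a homotopy equivalence rather than a homeomorphism.
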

Before the proof, recall that the product 
$P\times Q$ of two posets $P$ and $Q$ is defined by 
$(p, q)\leq (p', q')$ if and only if $p\leq_P p'$ and $q\leq_Q q'$. 
Then the order complex 
$\Delta(P\times Q)$ is a subdivision of the product of 
order complexes $\Delta P\times\Delta Q$ 
\cite[Theorem 10.21]{K}. In particular, we have a homeomorphism 
\begin{equation}
\label{eq:prod}
|\Delta(P\times Q)|\approx|\Delta P|\times|\Delta Q|. 
\end{equation}
\begin{proof}[Sketch of the proof Theorem \ref{thm:kunneth}]
First note that there is a natural map of posets (we omit the spaces 
$X, Y, X\times Y$), 
\[
\begin{split}
\Cau^{\ell_1}(a, a')\times
\Cau^{\ell_2}(b, b')&\longrightarrow
\Cau^{\ell_1+\ell_2}((a, b), (a', b'))\\ 
((x, t), (y, t'))&\longmapsto ((x, y), t+t')
\end{split}
\]
which induces a continuous map 
\begin{equation}
\bigsqcup_{\ell_1+\ell_2=\ell}
\left|
\Delta\left(
\Cau^{\ell_1}(a, a')\times
\Cau^{\ell_2}(b, b')\right)
\right|
\longrightarrow
\left|
\Delta\Cau^{\ell}((a, b), (a', b'))\right|. 
\end{equation}
By (\ref{eq:prod}), we have a continuous map 
\begin{equation}
\label{eq:decompDelta}
\bigsqcup_{\ell_1+\ell_2=\ell}
|\Delta\Cau^{\ell_1}(a, a')|
\times
|\Delta\Cau^{\ell_2}(b, b')|
\longrightarrow
\left|
\Delta\Cau^{\ell}((a, b), (a', b'))\right|. 
\end{equation}
By definition, the magnitude homotopy type of the product 
$\calM^\ell((a, b), (a', b'))$ is obtained by dividing 
the right-hand side of (\ref{eq:decompDelta}) by the subcomplex 
$\Deltadot$ consisting of sequences shorter than $\ell$. 
On the other hand, by dividing the left-hand side of 
(\ref{eq:decompDelta}) by the subspace 
\[
\bigsqcup_{\ell_1+\ell_2=\ell}
\left(|\Deltadot\Cau^{\ell_1}|
\times
|\Delta\Cau^{\ell_2}|
\cup
|\Delta\Cau^{\ell_1}|
\times
|\Deltadot\Cau^{\ell_2}|
\right), 
\]
we obtain the 
$\bigvee_{\ell_1+\ell_2=\ell}
\left(\calM^{\ell_1}(a, a')\wedge\calM^{\ell_2}(b, b')\right)$, which 
induces the homeomorphism (\ref{eq:kunnethwedge}). 
\end{proof}

Since the reduced homology group functor $\widetilde{H}_*$ is 
additive with respect to the wedge sum \cite[Corollary 2.25]{hat}, 
applying the reduced K\"unneth formula for smash product 
\cite[page 276]{hat}, we have the following K\"unneth formula for magnitude 
homology groups. 
\begin{corollary}(\cite[Theorem 5.3]{HW} for graphs and \cite[Proposition 4.3]{bot-kai} 
for metric spaces) 
We have the following split short exact sequence. 
\[
\begin{multlined}
0\longrightarrow 
\bigoplus_{\substack{i+j=n\\ \ell_1+\ell_2=\ell}}
\MH^{\ell_1}_i(X; a, a')\otimes_{\Z}
\MH^{\ell_2}_j(Y; b, b')\\
\longrightarrow
\MH^\ell_n(X\times Y; (a, b), (a', b'))\longrightarrow 
\\
\bigoplus_{\substack{i+j=n\\ \ell_1+\ell_2=\ell}}
\Tor_{\Z}(\MH_i^{\ell_1}(X; a, a'), \MH_{j-1}^{\ell_2}(Y; b, b')) \longrightarrow 0. 
\end{multlined}
\]
\end{corollary}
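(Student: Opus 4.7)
The plan is to combine the identification of magnitude homology with the reduced homology of the magnitude homotopy type (Theorem \ref{thm:mainisom}) with the wedge-of-smash-products decomposition of the product's magnitude homotopy type (Theorem \ref{thm:kunneth}), and then invoke the classical reduced K\"unneth formula for pointed CW complexes.

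First, I would apply Theorem \ref{thm:mainisom} to the metric space $X \times Y$ with endpoints $(a,b)$ and $(a',b')$, which gives
\[
\MH^\ell_n(X \times Y; (a,b),(a',b')) \cong \widetilde{H}_n(\calM^\ell(X \times Y; (a,b),(a',b'))).
\]
Next, by Theorem \ref{thm:kunneth}, the magnitude homotopy type on the right is homeomorphic to
$\bigvee_{\ell_1+\ell_2=\ell} \calM^{\ell_1}(X; a,a') \wedge \calM^{\ell_2}(Y; b,b')$.
Since reduced homology is additive on wedge sums of pointed CW complexes \cite[Corollary 2.25]{hat}, the above reduced homology group decomposes as a direct sum indexed by pairs $(\ell_1,\ell_2)$ with $\ell_1+\ell_2=\ell$.

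Then, for each such pair, I would apply the reduced K\"unneth formula for the smash product over $\Z$ \cite[page 276]{hat} to the pointed CW complexes $\calM^{\ell_1}(X; a,a')$ and $\calM^{\ell_2}(Y; b,b')$. This produces a natural split short exact sequence expressing $\widetilde{H}_n$ of the smash product in terms of $\bigoplus_{i+j=n} \widetilde{H}_i \otimes \widetilde{H}_j$ and $\bigoplus_{i+j=n-1} \Tor(\widetilde{H}_i, \widetilde{H}_j)$ of the factors. Taking the direct sum of these split short exact sequences over all pairs $(\ell_1,\ell_2)$ preserves exactness and splitting, and translating each $\widetilde{H}_k(\calM^{\ell_i})$ back into magnitude homology via Theorem \ref{thm:mainisom} yields exactly the claimed sequence.

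The main obstacle is purely bookkeeping rather than conceptual: one must verify that the magnitude homotopy types are genuine pointed CW complexes (which follows immediately from Definition \ref{def:mghtpy}, since they are quotients of simplicial complexes by subcomplexes), so that the cited K\"unneth formula for smash products applies. Beyond this, re-indexing the direct sums so that $i+j=n$ and $\ell_1+\ell_2=\ell$ appear together requires only a trivial reshuffling. All the substantive topological work has already been absorbed into Theorems \ref{thm:mainisom} and \ref{thm:kunneth}.
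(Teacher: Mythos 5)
Your proposal is correct and follows essentially the same route as the paper: the paper likewise deduces the corollary from Theorem \ref{thm:kunneth} via additivity of reduced homology over wedge sums and the reduced K\"unneth formula for smash products, translating back through Theorem \ref{thm:mainisom}. The indexing of the Tor term (your $i+j=n-1$ versus the statement's $i+(j-1)=n-1$) matches after the trivial reshuffling you note.
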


\subsection{Frame decomposition}
\label{sec:frame}

%In computations of magnitude homology groups, 

\begin{definition}
Let $X$ be a metric space. 
\begin{itemize}
\item[(1)] 
Let $\bm{x}=(x_0, \dots, x_k, \dots, x_n)$ be a sequence. The point $x_k$ is said to be 
\emph{smooth} if $d(x_0, \dots, \widehat{x_k}, \dots, x_n)=d(\bm{x})$, equivalently, 
$1\leq k\leq n-1$ and $d(x_{k-1}, x_k, x_{k+1})=d(x_{k-1}, x_{k+1})$. Otherwise $x_k$ is 
said to be \emph{singular} (i.e., $d(x_{k-1}, x_k, x_{k+1})>d(x_{k-1}, x_{k+1})$). 
\item[(2)] 
A sequence $(x_0, x_1, x_2, x_3)$ in $X$ is called 
a \emph{$4$-cut} if 
\[
\begin{split}
d(x_0, x_3)&< d(x_0, x_1, x_2, x_3)\\
&=d(x_0, x_2, x_3)=d(x_0, x_1, x_3). 
\end{split}
\]
In other words, $x_1$ and $x_2$ are smooth in $\bm{x}$, however, $x_1$ is singular in $(x_0, x_1, x_3)$ and 
$x_2$ is singular in $(x_0, x_2, x_3)$. 
\item[(3)] 
\[
m_X:=\inf\{d(\bm{x})\mid \bm{x} \mbox{ is a $4$-cut}\}. 
\]
(We set $m_X=\infty$ if $X$ does not have $4$-cuts.) 
\end{itemize}
\end{definition}
\begin{example}
\begin{itemize}
\item If $X$ is a subset of the Euclidean space $\R^n$, then $m_X=\infty$. 
\item If $X$ is a tree, then $m_X=\infty$. 
\end{itemize}
\end{example}
The existence of a $4$-cut is a source of the computational complexity 
in determining the magnitude homology groups. 
Indeed, Gomi \cite{gomi} proved that the non-existence of $4$-cuts 
is equivalent to the $E^2$-degeneration of the spectral sequence converging to $\MH^\ell_*(X)$. 

If $0<\ell<m_X$, Kaneta and the second author gave a decomposition of the magnitude homology 
into framed ones. Here we present a magnitude homotopy type version of the result. 

\begin{definition}
%\cite[Definition 3.3]{KY} 
\begin{itemize}
\item[(1)] 
Let $\bm{x} = (x_0, x_1, \dots , x_n)$ be a sequence in $X$. 
Suppose $x_{i_1}, x_{i_2}, \dots, x_{i_m}$ are the list of 
all singular points of $\bm{x}$. 
(Note that $i_1=0$ and $i_m=n$.) Define $\varphi(\bm{x})$ to be 
\[
\varphi(\bm{x})=(x_{i_1}, x_{i_2}, \dots, x_{i_m}). 
\]
We call $\varphi(\bm{x})$ the \emph{frame} of $\bm{x}$. 
\item[(2)] 
A sequence $\bm{x}$ is called a \emph{singular sequence} if all points are 
singular points. 
\end{itemize}
\end{definition}
Let $\bm{x}=(x_0, \dots, x_m)$ be a sequence. 
Denote by 
$\bm{x}\smallsetminus k:=(x_0, \dots, x_{k-1}, x_{k+1}, \dots x_m)$ 
the sequence obtained by removing $k$-th element. 
If $d(\bm{x})<m_X$, then we have the following. 
\begin{equation}
d(\bm{x}\smallsetminus k)
\begin{cases}
<d(\bm{x}), &\mbox{ if $x_k$ is singular}, \\
=d(\bm{x}) \mbox{ and }\varphi(\bm{x}
\smallsetminus k)=\varphi(\bm{x}), 
&\mbox{ if $x_k$ is smooth}. 
\end{cases}
\end{equation}
So $\bm{x}\smallsetminus k$ 
is either shorter than $\bm{x}$ or having 
the same frame with $\bm{x}$. 
This enables to pose the following definition. 
We define the space consisting of simplices which has the prescribed frame. 

\begin{definition}
Let $F=(a_0, a_1, \dots, a_m)$ be a singular sequence. 
Define $S(X; F)$ as the set of simplices whose frame is $F$, namely, 
\begin{equation}
S(X; F):=\{((x_i, t_i))_{i=1}^n\in\Delta\Cau^\ell(X; a_0, a_m)
\mid \varphi((x_0, \dots, x_n))=F\}. 
\end{equation}
If $d(F)=\ell<m_X$, then 
$S(X; F)\cup\Deltadot\Cau^\ell(X; a_0, a_m)$ is a simplicial complex. 
Then define the magnitude homotopy type with frame $F$ as 
\begin{equation}
\label{eq:framedM}
\calM^\ell(X; F):=\frac{|S(X; F)\cup\Deltadot\Cau^\ell(X; a_0, a_m)|}{|\Deltadot\Cau^\ell(X; a_0, a_m)|}. 
\end{equation}
\end{definition}
Now we can decompose the magnitude homotopy type 
in terms of the framed one, 
which can be viewed as a path integral representation of the magnitude 
homotopy type. 

\begin{theorem}
\label{thm:dec}
Let $X$ be a metric space and $a, b\in X$. Suppose $0<\ell<m_X$ and $d(a, b)\leq\ell$. 
\begin{itemize}
\item[$(1)$] 
Then, 
\begin{equation}
\label{eq:dec}
\calM^\ell(X; a, b)\approx
\bigvee_{F=(a_0, \dots, a_m)}\calM^\ell(X; F), 
\end{equation}
where $F=(a_0, \dots, a_m)$ ($m\geq 0$) runs all singular sequence of length $d(F)=\ell$ 
from point $a=a_0$ to $b=a_m$. 
\item[$(2)$] 
Let $F=(a_0, \dots, a_m)$ be a singular sequence. Then 
\[
\calM^{d(F)}(X; F)\simeq
\bigwedge_{i=1}^m\calM^{d(a_{i-1}, a_i)}(X; a_{i-1}, a_i). 
\]
\end{itemize}
\end{theorem}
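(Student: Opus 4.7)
The plan is to prove (1) by a frame-partition of light-like simplices that exploits the hypothesis $\ell<m_X$, and (2) by a join decomposition in the spirit of the K\"unneth argument (Theorem \ref{thm:kunneth}).

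For (1), the key combinatorial input is that the absence of $4$-cuts of length $\leq\ell$ makes the frame $\varphi$ well-defined and stable under removing a smooth vertex. Specifically, if $\bm{x}=(x_0,\dots,x_n)$ is light-like with $d(\bm{x})\leq\ell$ and $x_k$ is smooth, I would verify that each neighbor $x_{k\pm 1}$ has the same smooth/singular status in $\bm{x}\smallsetminus k$ as in $\bm{x}$ by showing that any discrepancy forces the $4$-tuple $(x_{k-2},x_{k-1},x_k,x_{k+1})$ (respectively $(x_{k-1},x_k,x_{k+1},x_{k+2})$) to be a $4$-cut, contradicting $\ell<m_X$. Three consequences then follow at once: every light-like simplex has a well-defined frame; each $S(X;F)\cup\Deltadot\Cau^\ell(X;a,b)$ is a subcomplex (removing a smooth vertex preserves $F$, while removing a backbone vertex shortens the sequence and lands in $\Deltadot$); and $S(X;F)\cap S(X;F')=\emptyset$ for distinct frames. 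Quotienting by $\Deltadot$ then yields the claimed wedge $\bigvee_F\calM^\ell(X;F)$.

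For (2), fix $F=(a_0,\dots,a_m)$ of length $\ell$, set $T_i=\sum_{j\leq i}d(a_{j-1},a_j)$, and let $B=\{(a_i,T_i)\}_{i=0}^m$ be the \emph{backbone} and $\calK_i$ the open causal interval between $(a_{i-1},T_{i-1})$ and $(a_i,T_i)$. The main structural observation is that every chain contained in some light-like chain with frame $F$ decomposes uniquely as $B'\cup\tau_1\cup\cdots\cup\tau_m$ with $B'\subseteq B$ and $\tau_i\in\Delta\calK_i$, so as simplicial complexes
\[
\overline{S(X;F)}\;=\;\Delta_B*\Delta\calK_1*\cdots*\Delta\calK_m,
\]
the join of the full $m$-simplex on $B$ with the open-interval order complexes. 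Moreover, the singularity of each intermediate $a_i$ in $F$ (combined with a triangle-inequality check in each sub-interval, using that $\Cau^{d(a_{i-1},a_i)}(X;a_{i-1},a_i)\cong I[a_{i-1},a_i]$ so every chain there is automatically light-like) shows that any $B'\subsetneq B$ strictly shortens the length; hence $\overline{S(X;F)}\cap\Deltadot=\partial\Delta_B*\Delta\calK_1*\cdots*\Delta\calK_m$, and the natural inclusion induces a homeomorphism $\overline{S(X;F)}/(\overline{S(X;F)}\cap\Deltadot)\cong\calM^\ell(X;F)$.

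Setting $Y=|\Delta\calK_1|*\cdots*|\Delta\calK_m|$ and applying the identities (\ref{eq:susp}) and (\ref{eq:joinsusp}) from Remark \ref{rem:void},
\[
\calM^\ell(X;F)\;\simeq\;\frac{\Delta^m*Y}{\partial\Delta^m*Y}\;\simeq\;(\Delta^m/\partial\Delta^m)*Y\;\simeq\;S^m*Y\;\simeq\;\Sus^{2m}\bigl(|\Delta\calK_1|\wedge\cdots\wedge|\Delta\calK_m|\bigr),
\]
where the last step iterates $X_1*\cdots*X_m\simeq\Sus^{m-1}(X_1\wedge\cdots\wedge X_m)$. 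By Corollary \ref{cor:arbitrary}$(i)$, each $\calM^{d(a_{i-1},a_i)}(X;a_{i-1},a_i)\simeq\Sus^2|\Delta\calK_i|$, so $\bigwedge_i\calM^{d(a_{i-1},a_i)}(X;a_{i-1},a_i)\simeq\Sus^{2m}\bigl(\bigwedge_i|\Delta\calK_i|\bigr)$, matching the above. The main obstacle is the combinatorial heart of (1)—the stability of smoothness under removing a smooth neighbor via the $4$-cut analysis—which requires a careful asymmetric treatment on both sides of the removed vertex. A secondary delicate point in (2) is handling degenerate cases (some $\calK_i$ empty, or $d(a_0,a_m)<\ell$), which must be interpreted with the $\void/\emptycpx$ conventions of Remark \ref{rem:void} to keep the basepoint bookkeeping correct in the join/suspension identities.
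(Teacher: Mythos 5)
Your proposal is correct and follows essentially the same route as the paper: part (1) is the same partition of light-like simplices by frames (your $4$-cut verification of the stability of smoothness is exactly the fact the paper records, without proof, in the displayed equation just before the theorem), and part (2) rests on the same join decomposition $\{a_0\}*\Delta I(a_0,a_1)*\{a_1\}*\cdots*\{a_m\}$ of $\overline{S(X;F)}$. The only difference is organizational: the paper treats $m=2$ by peeling off the apexes one at a time through the intermediate spaces $M_i=|\Delta I[a_{i-1},a_i)|/|\Delta I(a_{i-1},a_i)|$ and declares general $m$ ``similar,'' whereas you collapse the whole backbone simplex to $S^m$ at once and identify both sides with $\Sus^{2m}\bigl(\bigwedge_i|\Delta I(a_{i-1},a_i)|\bigr)$ via Corollary \ref{cor:arbitrary}$(i)$, which handles general $m$ uniformly.
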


Before the proof of Theorem \ref{thm:dec}, let us recall the notion of the 
\emph{join} of posets $P$ and $Q$ \cite[\S 1.1]{wac-pos}. 
The join $P*Q$ is the poset whose underlying set is $P\sqcup Q$ and the order 
relation is given by $x<y$ if either (i) $x<_Py$, (ii) $x<_Qy$, or (iii) $x\in P$ and $y\in Q$. 
Note that $P*Q$ and $Q*P$ are not isomorphic in general. 
The order complex of the join is a join of 
the order complexes. 
\begin{equation}
\Delta(P*Q)\cong \Delta(P)*\Delta(Q). 
\end{equation}

\begin{proof}[Proof of Theorem \ref{thm:dec}]
$(1)$ Since any sequence $((x_i, t_i))_{i=1}^n\in\Delta\Cau^\ell(X; a, b)$ with 
$d(x_0, \dots, x_n)=\ell$ is contained in $S(X; \varphi(x_0, \dots, x_n))$, we have 
\[
\Delta\Cau^\ell(X; a, b)=\bigcup_F S(X; F)\cup\Deltadot\Cau^\ell(X; a, b), 
\]
where $F$ runs all frames of length $\ell$ from $a$ to $b$. 
Furthermore, if two singular sequences $F_1$ and $F_2$ are different, 
then $S(X; F_1)\cap S(X; F_2)\subseteq\Deltadot\Cau^\ell(X; a, b)$. 
This yields (\ref{eq:dec}). 

(2) We give the proof for $m=2$. The general case ($m\geq 3$) is similar. 
Let $F=(a_0, a_1, a_2)$ be 
a singular sequence. As in the proof of Theorem \ref{thm:double}, 
we define intermediate (pointed) spaces as 
\begin{equation}
M_1:=\frac{|\Delta I[a_0, a_1)|}{|\Delta I(a_0, a_1)|}, \ 
M_2:=\frac{|\Delta I(a_1, a_2]|}{|\Delta I(a_1, a_2)|}. 
\end{equation}
Note that $I[a_0, a_1)$ is exactly equal to $\calP$ in the proof 
of Theorem \ref{thm:double} (see the formula (\ref{eq:intermP}) 
with $a=a_0, b=a_1$). By the proof of Theorem \ref{thm:double}, 
we have $\calM^{d(a_{i-1}, a_i)}(X; a_{i-1}, a_i)\simeq\Sus 
M_i$ ($i=1, 2$). 
We also note that $S(X; F)$ is equal to the 
set of simplices in $\Delta(I[a_0, a_1)*\{a_1\}*I(a_1, a_2])$ 
containing $\{a_0, a_1, a_2\}$. Hence, 
by (\ref{eq:framedM}), $\calM^\ell(X; F)$ is equal to 
$|\Delta(I[a_0, a_1)*\{a_1\}*I(a_1, a_2])|$ divided by 
\begin{equation}
\begin{split}
\left|\Delta\left(I[a_0, a_1)*\{a_1\}*I(a_1, a_2)\right)\right|
&\cup\left|\Delta\left(|I[a_0, a_1)*I(a_1, a_2]\right)\right|\\
&\cup\left|\Delta\left(|I(a_0, a_1)*\{a_1\}*I(a_1, a_2]\right)\right|. 
\end{split}
\end{equation}
Using the relations (\ref{eq:joinsusp}), (\ref{eq:equcone}), 
and (\ref{eq:susp}), we have 
\begin{equation}
\begin{split}
\calM^{d(F)}(X; F)&\simeq\frac{M_1*\{a_1\}*M_2}{M_1*M_2}\\
&\simeq
\frac{\Cone_{a_1}(M_1* M_2)}{M_1* M_2}\\
&\simeq
\Sus(M_1*M_2)\\
&\simeq
S^1\wedge(M_1*M_2)\\
&\simeq
S^1\wedge(\Sus (M_1\wedge M_2))\\
&\simeq
(S^1\wedge M_1)\wedge (S^1\wedge M_2)\\
&\simeq
\calM^{d(a_0, a_1)}(X; a_0, a_1)\wedge 
\calM^{d(a_1, a_2)}(X; a_1, a_2). 
\end{split}
\end{equation}
\end{proof}

\begin{remark}
\label{rem:pertexp}
Using Corollary \ref{cor:arbitrary} $(ii)$, 
Theorem \ref{thm:dec} is reformulated as 
\begin{equation}
\label{eq:perthtpy}
\begin{split}
\calM^\ell(X; a, b)
&\simeq
\bigvee_{F=(a_0, \dots, a_m)}
\left(\bigwedge_{i=1}^m
\calM^{d(a_{i-1}, a_i)}(X; a_{i-1}, a_i)\right)\\
&\simeq
\bigvee_{F=(a_0, \dots, a_m)}
\left(\bigwedge_{i=1}^m\Sus^2\Delta I(a_{i-1}, a_i)
\right), 
\end{split}
\end{equation}
where $F=(a_0, \dots, a_m), (m\geq 0)$ runs all singular sequence of length $d(F)=\ell$. 
This formula may be considered as a magnitude homotopy type version of 
the formula (\ref{eq:inverse2}). 
\end{remark}

\begin{definition}
(\cite[Definition 5.1]{KY}) 
A singular sequence $F=(a_0, \dots, a_m)$ is said to be a 
\emph{thin frame} 
if $I(a_{i-1}, a_i)=\emptyset$ for all $i=1, \dots, m$. 
\end{definition}

The next result tells that if $X$ is either 
\begin{itemize}
\item a subset of the Euclidean space, or 
\item a metric space defined by a tree 
(Example \ref{ex:graphs} (1)), 
\end{itemize}
then the magnitude homotopy type $\calM^\ell(X)$ 
is a wedge of spheres. 

\begin{corollary}
\label{cor:toset}
Let $X$ be a metric space. Assume that $m_X=\infty$ and the interval poset 
$I[a, b]$ is totally ordered for any $a, b\in X$. Then 
\[
\calM^\ell(X) \simeq 
\bigvee_{\substack{F=(a_0, \dots, a_m)\\ \mbox{\scriptsize thin frame with }d(F)=\ell}}S^m. 
\]
\end{corollary}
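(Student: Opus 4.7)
The plan is to combine the wedge decomposition of the total magnitude homotopy type over pairs of endpoints with the frame decomposition of Theorem~\ref{thm:dec}, and then identify each smash factor using the totally-ordered-interval hypothesis.

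First I would start from the tautological decomposition
\[
\calM^\ell(X)=\bigvee_{a,b\in X}\calM^\ell(X;a,b).
\]
Because $m_X=\infty$, Theorem~\ref{thm:dec}$(1)$ applies to every pair $(a,b)$ with $d(a,b)\le\ell$ and yields a further wedge decomposition indexed by singular sequences $F=(a_0,\dots,a_m)$ of length $\ell$ from $a$ to $b$. Since the endpoints of a frame are intrinsically determined by $F$, gluing these decompositions together produces
\[
\calM^\ell(X)\simeq\bigvee_{\substack{F\text{ singular}\\ d(F)=\ell}}\calM^\ell(X;F).
\]

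Next I would apply Theorem~\ref{thm:dec}$(2)$ to each framed piece, giving
\[
\calM^\ell(X;F)\simeq\bigwedge_{i=1}^{m}\calM^{d(a_{i-1},a_i)}(X;a_{i-1},a_i).
\]
Since every interval $I[a_{i-1},a_i]$ is totally ordered by assumption, Example~\ref{ex:easy}$(2)$ identifies each smash factor: it is a point when $I(a_{i-1},a_i)\ne\emptyset$ and is homotopy equivalent to $S^1$ when $I(a_{i-1},a_i)=\emptyset$. A smash product of well-pointed CW complexes containing a contractible factor is itself contractible, so $\calM^\ell(X;F)\simeq *$ whenever $F$ has a gap with nonempty open interval, i.e., whenever $F$ fails to be thin. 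The surviving wedge summands correspond precisely to thin frames, and for such a frame $F$ of length $m$ the smash of $m$ copies of $S^1$ is $S^m$, giving
\[
\calM^\ell(X)\simeq\bigvee_{\substack{F=(a_0,\dots,a_m)\\ \text{thin frame, }d(F)=\ell}}S^m.
\]

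The only real subtlety in the execution is justifying the collapsing of contractible smash factors to the basepoint; this requires that each $\calM^{d(a_{i-1},a_i)}(X;a_{i-1},a_i)$ be well-pointed, which is automatic from the CW structure provided by Definition~\ref{def:mghtpy} (the basepoint is the image of a subcomplex). Apart from this bookkeeping, the argument is a direct chaining of the frame decomposition with the totally-ordered-interval computation.
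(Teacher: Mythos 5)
Your proposal is correct and follows essentially the same route as the paper: decompose $\calM^\ell(X)$ over endpoint pairs, apply the frame decomposition of Theorem~\ref{thm:dec}, and identify each smash factor via Example~\ref{ex:easy}(2), with non-thin frames contributing contractible summands and thin frames contributing $(S^1)^{\wedge m}\approx S^m$. Your extra remark on well-pointedness is a harmless refinement the paper leaves implicit.
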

\begin{proof}
By Theorem \ref{thm:dec}, we have 
\begin{equation}
\calM^\ell(X) \simeq 
\bigvee_{F=(a_0, \dots, a_m)}
\left(
\bigwedge_{i=1}^m\calM^{d(a_{i-1}, a_i)}(X; a_{i-1}, a_i)
\right), 
\end{equation}
where $F=(a_0, \dots, a_m)$ runs all singular sequence with length $d(F)=\ell$. 
If $I(a_{i-1}, a_i)\neq\emptyset$ for some $i=1, \dots, m$, then 
by Example \ref{ex:easy} (2), 
$\bigwedge_{i}\calM^{d(a_{i-1}, a_i)}(X; a_{i-1}, a_i)$ 
is contractible. 
On the other hand, if $I(a_{i-1}, a_i)=\emptyset$ for any 
$i=1, \dots, m$, then 
$\bigwedge_{i}\calM^{d(a_{i-1}, a_i)}(X; a_{i-1}, a_i)
\simeq (S^1)^{\wedge m}\approx S^m$. 
This completes the proof. 
\end{proof}

\section{Gluing and magnitude homotopy type}
\label{sec:applic}

\subsection{Gluing metric spaces}
\label{sec:gluing}

Let $G, H$ and $K$ be metric spaces with isometries 
$i_G:K\hookrightarrow G$ and $i_H:K\hookrightarrow H$. 
Let $X=G\cup_K H=G\cup_{i_G, i_H}H:=G\sqcup H/(i_G(k)\sim i_H(k), k\in K)$. 
If $i_G(K)\subset G$ and $i_H(K)\subset H$ are closed subsets, then 
%the following 
\begin{equation}
\label{eq:gluemet}
d_X(x, y)=
\begin{cases}
d_G(x, y), \mbox{ if } x, y\in G, \\
d_H(x, y), \mbox{ if } x, y\in H, \\
\inf\limits_{k\in K} (d_G(x, i_G(k))+d_H(i_H(k), y)), \mbox{ if }x\in G, y\in H,  
\end{cases}
\end{equation}
gives a metric on $X$ \cite[Lemma 5.24]{bri-hae}. 

Denote by $\inter(G):=G\smallsetminus i_G(K)$ and 
$\inter(H):=H\smallsetminus i_H(K)$. 
In this paper, we mainly consider the gluing by the compact set $K$. 
In this case, the minimum is attained in (\ref{eq:gluemet}), namely, for $x\in\inter(G)$ and 
$y\in\inter(H)$, we have 
\begin{equation}
d_X(x, y)=\min_{k\in K}(d_G(x, i_G(k))+d_H(i_H(k), y)). 
\end{equation}
We sometimes consider $K\subset G, K\subset H, K\subset X$ and omit $i_G$ and $i_H$. 
\begin{definition}
%Let $y\in H$. 
We say that $y\in H$ \emph{projects to} $K=i_H(K)$ if there exists $\pi(y)\in K$ 
such that 
\[
d_H(k, y)=d_H(k, \pi(y))+d_H(\pi(y), y)
\]
for every $k\in K$. 
Note that the notion ``$y\in H$ projects to $K$'' 
can be defined for any metric space $H$ and subspace $K$. 
\end{definition}

\begin{proposition}
Let $X=G\cup_K H$ be a gluing by a compact set $K$ as above. 
If $y\in H$ projects to $K$, then 
$y\in X$ projects to $G$. 
\end{proposition}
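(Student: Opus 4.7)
The plan is to use the same projection point $\pi(y) \in K$ for both projections: we claim that $\pi(y) \in K \subseteq G$ witnesses the projection of $y \in X$ onto $G$. In other words, for every $g \in G$ we must verify
\[
d_X(g, y) = d_X(g, \pi(y)) + d_X(\pi(y), y).
\]
Since $g, \pi(y) \in G$ and $y, \pi(y) \in H$, the right-hand side rewrites as $d_G(g, \pi(y)) + d_H(\pi(y), y)$, and the left-hand side is given by the gluing formula (\ref{eq:gluemet}), which (because $K$ is compact) becomes $\min_{k \in K}(d_G(g, k) + d_H(k, y))$.

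The main step is to bound this minimum from below by $d_G(g, \pi(y)) + d_H(\pi(y), y)$. Fix any $k \in K$. By hypothesis, $y$ projects to $K$ via $\pi(y)$, so $d_H(k, y) = d_H(k, \pi(y)) + d_H(\pi(y), y)$. Since $i_G$ and $i_H$ are isometries that we have identified with inclusions, the distance $d_H(k, \pi(y))$ equals $d_G(k, \pi(y))$, allowing us to apply the triangle inequality inside $G$:
\[
d_G(g, k) + d_H(k, \pi(y)) \;=\; d_G(g, k) + d_G(k, \pi(y)) \;\geq\; d_G(g, \pi(y)).
\]
Combining these gives $d_G(g, k) + d_H(k, y) \geq d_G(g, \pi(y)) + d_H(\pi(y), y)$ for every $k \in K$, with equality achieved at $k = \pi(y)$ itself. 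Hence the minimum over $K$ is exactly $d_G(g, \pi(y)) + d_H(\pi(y), y)$, which is what was required.

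There is essentially no serious obstacle here; the only subtlety is being careful about the interface between $d_H$ and $d_G$ restricted to $K$, which is resolved by the isometry assumption on $i_G$ and $i_H$. I would also remark that compactness of $K$ ensures the infimum in (\ref{eq:gluemet}) is attained, so the argument cleanly produces a witness $\pi(y) \in G$ rather than just an infimum-approaching sequence; without compactness one would need to take limits and verify that the same $\pi(y)$ continues to work, which it does automatically by the above estimate.
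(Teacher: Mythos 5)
Your proof is correct and rests on exactly the same ingredients as the paper's: the same witness $\pi(y)$, the same splitting $d_H(k,y) = d_H(k,\pi(y)) + d_H(\pi(y),y)$ from the projection hypothesis, and the same application of the triangle inequality in $G$. The paper frames this as a proof by contradiction and invokes compactness of $K$ to extract a minimizing $k_0$; you instead bound the infimand uniformly from below and observe that $k=\pi(y)$ attains the bound, which is a cleaner direct argument that, as you correctly note, does not actually require compactness of $K$ for this particular proposition (the infimum is identified exactly by your estimate, independently of whether it is attained by some other $k$).
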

\begin{proof}
By the assumption there exists $\pi(y)\in K$ such that 
\begin{equation}
\label{eq:proj}
d_X(x, y)=d_X(x, \pi(y))+d_X(\pi(y), y), 
\end{equation}
for all $x\in K$. 
We will prove that (\ref{eq:proj}) holds for every $x\in G$. 
By the triangle inequality, the left-hand side of (\ref{eq:proj}) is less than or equal to 
the right-hand side for any $x\in G$. Therefore, 
if (\ref{eq:proj}) does not hold, there exists $x\in G$ such that 
$d_X(x, y)<d_X(x, \pi(y))+d_X(\pi(y), y)$. 
Since $K$ is 
compact, there exists $k_0\in K$ such that 
$d_X(x, y)=d_G(x, k_0)+d_H(k_0, y)$ and we have 
\[
\begin{split}
d_X(x, \pi(y))+d_X(\pi(y), y)&>d_X(x, y)\\
&=d_G(x, k_0)+d_H(k_0, , y)\\
&=d_G(x, k_0)+d_H(k_0, \pi(y))+d_H(\pi(y), y), 
\end{split}
\]
which contradicts the triangle inequality 
$d_G(x, k_0)+d_H(k_0, \pi(y))\geq d_X(x, \pi(y))$. 
\end{proof}

\subsection{Discrete Morse theory and projecting matching}
\label{sec:proj}

Metric spaces $G$, $H$, $K$ are the same as in 
the previous section. 

\begin{definition}
\cite[Definition 4.3]{roff} 
Let us define $H_*\subset H$ as 
\[
H_*:=\{y\in H\smallsetminus K\mid y\mbox{ projects to }K\}. 
\]
A point in $H_*$ is called a \emph{biased point}. 
Denote the complement of $K\cup H_*$ by 
$H_0:=H\smallsetminus (K\cup H_*)$. 
A point in $H_0$ is called a \emph{non-biased point} 
(or \emph{neutral point} in \cite{roff}). 
\end{definition}

\begin{definition}
\cite[Definition 5.2]{roff} 
Let $\bm{x}=(x_0, \dots, x_n)$ be a sequence in $X$. We will say 
$\bm{x}$ is \emph{flat} if it is contained in $G\cup H_0$ or contained in $H$. 
\end{definition}

\begin{definition}
\cite[Definition 5.7]{roff} 
A sequence $\bm{x}=(x_0, \dots, x_n)$ is said to be \emph{sticky} if 
$x_0$ is contained in $\inter(G)$ and $x_n$ is contained in $H_*$, or 
vise versa, and $x_1, \dots, x_{k-1}$ are all contained in $K$. 
\end{definition}
Recall that a subsequence 
of $\bm{x}=(x_0, \dots, x_n)$ is a sequence 
$(x_i, x_{i+1}, \dots, x_j)$ of consecutive points in $\bm{x}$, where $0\leq i\leq j\leq n$. 
Let $\bm{y}=(y_0, \dots, y_m)$ be another sequence. If $x_n=y_0$ we can concatenate 
two paths $\bm{x}*\bm{y}=(x_0, \dots, x_{n-1}, x_n, y_1, \dots, y_m)$. 
\begin{proposition}
(\cite[Proposition 5.9]{roff})
\label{prop:roff}
Let $X=G\cup_K H$ be as above. Suppose $H$ projects to $K$. 
Let $\bm{x}=(x_0, \dots, x_n)$ be a sequence in $X$. Then the following 
are equivalent. 
\begin{itemize}
\item[(a)] 
$\bm{x}$ has no sticky subsequence. 
\item[(b)] 
$\bm{x}$ can be decomposed as a concatenation of paths 
$\bm{x}=\bm{x}_0*\bm{x}_1*\cdots\bm{x}_m$ such that 
each path $\bm{x}_i$ is flat and each point of concatenation is a 
non-biased point, that is a point in $H_0$. 
\end{itemize}
We call such a sequence \emph{twistable sequence}. 
\end{proposition}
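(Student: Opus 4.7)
The strategy is to exploit the hypothesis ``$H$ projects to $K$'' to simplify the statement considerably. First I would observe that this hypothesis forces $H_0 = \emptyset$: every $y \in H \setminus K$ projects to $K$ and hence lies in $H_*$, so $H_0 = H \setminus (K \cup H_*) = \emptyset$. Consequently any decomposition as in (b) admits no concatenation points, since these would have to lie in $H_0$. Thus (b) reduces to the assertion that $\bm{x}$ itself is a single flat sequence, i.e.\ either $\bm{x} \subset G \cup H_0 = G$ or $\bm{x} \subset H$.

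The implication (b)$\Rightarrow$(a) is then immediate. If $\bm{x} \subset G$, then $\bm{x}$ contains no $H_*$-point, because $H_* \cap G \subset H_* \cap (G \cap H) = H_* \cap K = \emptyset$; but every sticky subsequence has an endpoint in $H_*$, so $\bm{x}$ cannot contain one. Symmetrically, $\bm{x} \subset H$ excludes the $\inter(G)$-endpoint required by the definition of stickiness.

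For (a)$\Rightarrow$(b), I would argue the contrapositive. Suppose $\bm{x}$ is not flat, so it contains both an $\inter(G)$-point and an $H_*$-point. Without loss of generality there are indices $p < q$ with $x_p \in \inter(G)$ and $x_q \in H_*$; the reverse case is handled by the ``vice versa'' clause in the definition of sticky. Set
\[
l := \max\{i : p \leq i < q,\ x_i \in \inter(G)\}, \qquad r := \min\{i : l < i \leq q,\ x_i \in H_*\}.
\]
By the maximality of $l$, no $x_i$ with $l < i < r$ lies in $\inter(G)$; by the minimality of $r$, none lies in $H_*$; and since $H_0 = \emptyset$, the ambient space decomposes as $X = \inter(G) \sqcup K \sqcup H_*$, forcing $x_{l+1}, \dots, x_{r-1} \in K$. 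Hence $(x_l, x_{l+1}, \dots, x_r)$ is a sticky subsequence, contradicting (a).

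The main obstacle is recognising the simplification imposed by the projection hypothesis; once one notices $H_0 = \emptyset$, both implications reduce to straightforward case analysis and a short index-chasing argument to extract a sticky witness from any non-flat sequence. The only mild subtlety is taking ``maximum'' and ``minimum'' over possibly single-element sets, but the endpoints $p$ and $q$ themselves guarantee that the sets in question are non-empty, so both $l$ and $r$ are well-defined.
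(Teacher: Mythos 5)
Your proof is internally consistent, but it hinges entirely on reading the hypothesis ``$H$ projects to $K$'' as $H_0=\emptyset$, which collapses the proposition to the degenerate assertion that a twistable sequence is a single flat piece. While the paper does assert elsewhere (\S\ref{sec:mv}) that ``$H$ projects to $K$'' is equivalent to $H_0=\emptyset$, that hypothesis in the statement of Proposition \ref{prop:roff} is evidently vestigial: the paper's own proof never invokes it and works with a possibly nonempty $H_0$, and the proposition is later applied in exactly that generality --- the proof of Theorem \ref{thm:sycamore} decomposes a twistable sequence into several flat pieces concatenated at points of $H_0$, which is vacuous under your reading. So although you have correctly proved the literal (degenerate) statement, you have not proved the result the paper actually needs, and the substantive content of the proposition is absent from your argument.

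Concretely, what is missing is the decomposition argument for general $H_0$: take the maximal flat initial segment $(x_0,\dots,x_k)$, observe that the failure of flatness at $x_{k+1}$ forces a configuration $(x_j,x_{j+1},\dots,x_k,x_{k+1})$ with $x_{j+1},\dots,x_k\in K$ and one endpoint in $\inter(G)$ or $H_*$, and then use the absence of sticky subsequences to conclude that the other endpoint $x_j$ must lie in $H_0$ (rather than in $H_*$ or $\inter(G)$); one then splits at $x_j$ and iterates. This is where the hypothesis (a) does real work, and it is the step your contrapositive argument bypasses. Your extraction of a sticky witness from a non-flat sequence via the indices $l$ and $r$ is correct as far as it goes, but it only reconstructs the $H_0=\emptyset$ case; with $H_0\neq\emptyset$ a non-flat sequence need not contain a sticky subsequence at all, and the equivalence becomes a genuine structure theorem rather than a dichotomy between ``flat'' and ``contains a sticky piece.'' I would recommend either proving the general statement (dropping the projection hypothesis, as the paper's proof implicitly does) or at least flagging that your argument does not cover the setting in which the proposition is subsequently used.
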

\begin{proof}[Proof of Proposition \ref{prop:roff}] 
Suppose $\bm{x}=(x_0, \dots, x_n)$ has no sticky subsequence. 
%We construct a decomposition as in (b) by the induction on $n$. 
If $\bm{x}$ is flat, then there is nothing to prove. 
Suppose $\bm{x}$ is not flat. Let $1\leq k\leq n$ be the maximum 
index such that $(x_0, \dots, x_k)$ is flat. 
Then $(x_0, \dots, x_k, x_{k+1})$ is not flat. By definition, 
the subsequence $(x_0, \dots, x_k)$ 
is either 
\begin{itemize}
\item[(i)] 
contained in $H$, or 
\item[(ii)] 
contained in $G\cup H_0$. 
\end{itemize}
In the case (i), 
$x_{k+1}$ is not contained in $H$, hence $x_{k+1}\in\inter(G)$. 
Let $j\leq k$ be the index such that $x_j\in\inter(H)$ and 
$x_{j+1}, \dots, x_k\in K$. Since $(x_j, x_{j+1}, \dots, x_k, x_{k+1})$ 
can not be a sticky sequence, $x_j\in H_0$. Then decompose 
$\bm{x}$ as $(x_0, \dots, x_j)*(x_j, \dots, x_n)$. The former 
part $\bm{x}_0=(x_0, \dots, x_j)$ is flat, and we continue the 
decomposition for the latter part $(x_j, \dots, x_n)$. 
In the case (ii), we have $x_{k+1}\in H_*$. 
Let $j\leq k$ be the index such that $x_j\notin K$ and 
$x_{j+1}, \dots, x_k\in K$. Then $x_j$ is contained in 
$H_0\cup\inter(G)$. 
Since $(x_j, x_{j+1}, \dots, x_k, x_{k+1})$ 
can not be a sticky sequence, we have $x_j\in H_0$. 
Then decompose 
$\bm{x}$ as $(x_0, \dots, x_j)*(x_j, \dots, x_n)$. The former 
part $\bm{x}_0=(x_0, \dots, x_j)$ is flat, and we continue the 
decomposition for the latter part $(x_j, \dots, x_n)$. 
Thus (a) implies (b). 

The converse (b)$\Longrightarrow$(a) is straightforward. 
\end{proof}

\begin{remark}
The terminology ``twistable'' is in accordance with 
the previous paper \cite{roff} 
in which the term is used in the context of sycamore twist. 
We will see later that 
it is also closely related to the critical simplices of 
the following acyclic matching 
(Theorem \ref{thm:sycamore}). 
\end{remark}

Now we construct an acyclic matching on the order complex 
$\Delta\Cau^\ell(X; a, b)$. 
Let $\bm{x}=((x_0, t_0), \dots, (x_n, t_n))$ be a light-like sequence. 
As we noticed in \S \ref{sec:const}, we will omit 
the time parameters $t_k$. 

Suppose $\bm{x}$ is not twistable. 
Then by Proposition \ref{prop:roff}, it contains 
sticky subsequences. Let $(x_i, x_{i+1}, \dots, x_j)$ be the first sticky 
subsequence of $\bm{x}$. Then either $x_i$ or $x_j$ is contained in $H_*$. 
We can construct a matching by the following rule: 
add the image of 
the point in $H_*$ by 
the projection $\pi$ to the sequence if it is not there already, 
otherwise remove it. 
More precisely, we define as follows. 
\begin{definition}
\label{def:projmatch}
Let $\bm{x}=(x_0, \dots, x_n)$ be a light-like sequence with the first 
sticky subsequence $(x_i, x_{i+1}, \dots, x_j)$. 
We define the \emph{projecting matching} as follows. 
\begin{itemize}
\item[(i)] 
If $x_i\in H_*$ and $x_{i+1}\neq\pi(x_i)$, then insert $\pi(x_i)$ to 
obtain $\bm{x'}=(x_0,\dots, x_i, \pi(x_i), x_{i+1}, \dots, x_n)$. 
Set $\bm{x}\vdash\bm{x'}$. 
\item[(ii)] 
If $x_i\in H_*$ and $x_{i+1}=\pi(x_i)$, then delete $\pi(x_i)$ to 
obtain $\bm{x''}=(x_0,\dots, x_i, x_{i+2}, \dots, x_n)$. 
Set $\bm{x''}\vdash\bm{x}$. 
\item[(iii)] 
If $x_j\in H_*$ and $x_{j-1}\neq\pi(x_j)$, then insert $\pi(x_j)$ to 
obtain $\bm{x'}=(x_0,\dots, x_{j-1}, \pi(x_j), x_j, \dots, x_n)$. 
Set $\bm{x}\vdash\bm{x'}$. 
\item[(iv)] 
If $x_j\in H_*$ and $x_{j-1}=\pi(x_j)$, then delete $\pi(x_j)$ to 
obtain $\bm{x''}=(x_0,\dots, x_{j-2}, x_j, \dots, x_n)$. 
Set $\bm{x''}\vdash\bm{x}$. 
\end{itemize}
Other than light-like sequence, we do not construct any matchings. 

The sticky sequence $(x_i, \dots, x_j)$ is called \emph{fillable} if 
$x_{i+1}\neq \pi(x_i)$ ($x_i\in H_*$) or 
$x_{j-1}\neq \pi(x_j)$ ($x_j\in H_*$). 
Otherwise, we call the sequence \emph{removable}. 
\end{definition}

\begin{proposition}
\label{prop:acyclic}
The projecting matching is a 
bounded
acyclic matching. 
\end{proposition}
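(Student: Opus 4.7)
The plan is to verify three properties of the projecting matching: it is a partial matching, it is bounded, and it is acyclic.

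For the partial matching property, I would first check that each operation (i)--(iv) produces a light-like sequence from a light-like one. The key identity is that if $y \in H_*$, then $d(y, k) = d(y, \pi(y)) + d(\pi(y), k)$ for every $k \in K$, so inserting $\pi(x_i)$ between $x_i \in H_*$ and a point in $K$ preserves the total length and all causal relations, keeping the sequence light-like. Next I would verify that (i) and (ii) are mutually inverse: applying (i) to a fillable $\bm{x}$ produces $\bm{x}'$ in which $(x_i, \pi(x_i), x_{i+1}, \ldots, x_j)$ is precisely the first sticky subsequence, and this is removable by (ii) whose application recovers $\bm{x}$. An analogous argument handles (iii) and (iv). Since the rule always picks the first sticky subsequence canonically, no simplex lies in two different pairs.

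For boundedness, the observation is that each matching step adds only a vertex of the form $\pi(x)$ with $x \in H_*$ already present in the simplex, and since $\pi(H_*) \subseteq K$ while $K \cap H_* = \emptyset$, such a newly added vertex cannot itself trigger further projections. Therefore, starting from any light-like simplex $\bm{a}$, every simplex along a chain has vertex set contained in the finite set $V(\bm{a}) \cup \pi(V(\bm{a}) \cap H_*)$, where $V(\bm{a})$ denotes the vertex set of $\bm{a}$. Combined with the fact that dimensions in any chain differ by at most $1$, this yields a uniform bound $N(\bm{a})$.

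For acyclicity, I would use a monovariant argument. Let $\sigma(\bm{x})$ denote the starting position of the first sticky subsequence of $\bm{x}$ (set to $\infty$ if none exists). In each matching pair $(\bm{b}, \bm{a}) \in M$, the matching acts at the first sticky subsequence, so $\sigma(\bm{b})$ and $\sigma(\bm{a})$ are tied together up to a fixed shift depending on the case. Taking a face $\bm{a}_i \supset \bm{b}_i$ can only leave $\sigma$ fixed or move it earlier. Iterating around a cycle $\bm{a}_1 \supset \bm{b}_1 \vdash \bm{a}_2 \supset \cdots \vdash \bm{a}_{p+1} = \bm{a}_1$ forces $\sigma$ to return to its initial value, pinning down the matched positions throughout; a finer case analysis based on the location of the removed vertex relative to the matched sticky subsequence then contradicts the distinctness of the $\bm{a}_i$.

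The main obstacle is the acyclicity step: the face relation $\bm{a}_i \supset \bm{b}_i$ is unrestricted, so one must carefully track how removing an arbitrary vertex (possibly far from the first sticky subsequence) interacts with the matching rule applied to $\bm{b}_i$. If the removed vertex lies within the first sticky subsequence, it may destroy stickiness or create a different sticky pattern earlier in the sequence, requiring a case-by-case analysis. I expect the cleanest route is to refine $\sigma$ to a lexicographic invariant recording both the starting position and the internal shape (length, endpoint types, and presence of $\pi$-images) of the first sticky subsequence, so that cyclic behavior is obstructed at a single step of the analysis.
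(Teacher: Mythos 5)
Your verification that the matching is a well-defined partial matching is fine, and your boundedness argument is actually a nice alternative to the paper's: since the matching only ever inserts points of $K=\pi(H_*)$, and $K\cap H_*=\emptyset$, an inserted point never triggers further insertions, so every simplex in a chain starting from $\bm{a}$ has vertices in the finite set $V(\bm{a})\cup\{(\pi(x), t\pm d(x,\pi(x)))\mid (x,t)\in V(\bm{a}),\ x\in H_*\}$; with the distinctness condition in Definition \ref{def:bdd} this bounds the chain length outright. The paper instead derives boundedness from the same case analysis it uses for acyclicity, so your route is cleaner there.

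The acyclicity argument, however, has a genuine gap. Your proposed monovariant $\sigma(\bm{x})$ (position of the first sticky subsequence) is not monotone under taking faces: removing a vertex can move the first sticky subsequence \emph{later} as well as earlier. For instance, if the first sticky subsequence is $(g,k,h)$ with $g\in\inter(G)$, $k\in K$, $h\in H_*$, and the preceding point is in $H_0$, then deleting $g$ destroys this sticky subsequence entirely, and the next sticky subsequence (if any) starts further along; conversely, deleting a point of $H_0$ sitting between a $G$-point and a $K$-run can create a brand-new sticky subsequence earlier than all existing ones. So ``iterating around a cycle forces $\sigma$ to return to its initial value'' does not pin anything down, and the ``finer lexicographic refinement'' you defer to is exactly where the proof lives --- you have not exhibited an invariant that actually works. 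The paper's key idea, which is absent from your sketch, is to use the counts $|\bm{x}|_G$ and $|\bm{x}|_H$ of points in $\inter(G)$ and $\inter(H)$: these are \emph{preserved} by the matching (which only inserts or deletes points of $K$) and weakly decrease under faces, so around a cycle every face step must delete a point of $K$. One then argues that the only $K$-point whose removal from $\bm{y}_1$ leaves a fillable first sticky subsequence is the one just inserted, forcing $\bm{x}_2=\bm{x}_1$ and contradicting $p\geq 2$. Without this (or some genuinely monotone substitute), your acyclicity proof does not close.
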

\begin{proof}
Note that any sequence 
$\bm{x}=(x_0, \dots, x_n)$ can be decomposed as 
\begin{equation}
\label{eq:fsdec}
\bm{x}=\bm{w}_1*\bm{w}_2*\dots *\bm{w}_m, 
\end{equation}
such that each $\bm{w}_i$ is either flat or sticky. 
We can construct the decomposition (\ref{eq:fsdec}) as 
follows. First pick up all the sticky subsequences from $\bm{x}$. 
Then $\bm{x}$ is decomposed as a concatenation of 
sticky sequences and sequences which do not contain sticky 
subsequences. Using Proposition \ref{prop:roff}, we can 
further decompose non-sticky parts into flat sequences, and 
assume that each point of concatenation of two flat sequences is 
contained in $H_0$. 
Note also that, any point in $K$ is not the 
point of concatenation in (\ref{eq:fsdec}) because the end points 
of a sticky sequence are contained in $H_*\cup\inter(G)$. 

We first prove acyclicity. 
Suppose there exists a cycle 
\begin{equation}
\bm{x}_1\vdash\bm{y}_1\supset
\bm{x}_2\vdash\bm{y}_2\supset\dots\supset
\bm{x}_p\vdash\bm{y}_p\supset
\bm{x}_{p+1}=\bm{x}_1, 
\end{equation}
$p\geq 2$ and 
$\bm{x}_i\neq \bm{x}_j$ for $1\leq i<j\leq p$. 
Let $\bm{x}_1=(x_0, \dots, x_n).$ 
Denote the number of points of $\inter(G)$ (resp. $\inter(H)$) 
in the sequence $\bm{x}$ by $|\bm{x}|_G$ (resp. $|\bm{x}|_H$). 
Then 
$|\bm{x}_1|_G= |\bm{y}_1|_G\geq |\bm{x}_2|_G= |\bm{y}_2|_G\geq \dots
= |\bm{y}_p|_G\geq |\bm{x}_{p+1}|_G=|\bm{x}_1|_G$. 
So, these numbers are equal. Similarly, 
$|\bm{x}_1|_H= |\bm{y}_1|_H=|\bm{x}_2|_H= |\bm{y}_2|_H= \dots
= |\bm{y}_p|_H= |\bm{x}_{p+1}|_H=|\bm{x}_1|_H$. 
Hence for each 
$i\geq 1$, $\bm{x}_{i+1}$ is obtained from 
$\bm{y}_i=(y_{i, 0}, \dots, y_{i, n+1})$ by removing a point in $K$, say 
$y_{i,\alpha_i}\in K$. 
Furthermore, the removal of 
$y_{i,\alpha_i}$ from $\bm{y}_i$ 
produces a new first sticky subsequence. 
Let us consider the decomposition (\ref{eq:fsdec}) of $\bm{x}_1$. 
Suppose $\bm{w}_i=(x_q, x_{q+1}, \dots, x_r)$ 
is the first sticky subsequence of 
$\bm{x}_1$. We assume $x_q\in H_*$ 
and $x_r\in\inter(G)$. Note that $(x_q, \dots, x_r)$ is 
fillable. 
By definition, $\bm{y}_1$ is obtained by inserting $\pi(x_q)$ between 
$x_q$ and $x_{q+1}$. 
Then $\bm{w}_i':=(x_q, \pi(x_q), x_{q+1}, \dots , x_r)$ is the first 
sticky subsequence of $\bm{y}_1$, 
which is removable. 
By the construction, $\bm{w}_1, \dots, \bm{w}_{i-1}$ are 
flat. Any removal of a point of $K$ from a flat sequence 
does not produce new sticky sequence. 
Thus $\alpha_1\geq q+1$. 
Furthermore, if $\alpha_1>r+1$, then 
$\bm{x}_2$ has the 
first sticky part $\bm{w}_i'$ which is removable, and 
hence $\bm{x}_2\vdash\bm{y}_2$ is not possible. 
Therefore, $\alpha_1$ belongs to $\{{q+1}, \dots, r\}$. 
In order for the next matching $\bm{x}_2\vdash\bm{y}_2$, 
the only possibility is $\alpha_1=q+1$ and 
$x_{\alpha_1}=\pi(x_q)$, and $\bm{x}_2=\bm{x}_1$. 
This contradicts $p\geq 2$. 

Next we prove that the projecting matching is bounded. 
Suppose $p>0$ and there is a sequence 
$\bm{x}_1\vdash\bm{y}_1\supset\bm{x}_2\vdash
\bm{y}_2\supset\dots\supset
\bm{x}_p\vdash\bm{y}_p$. 
Then let us prove $p\leq |\bm{x}_1|_G+|\bm{x}_1|_H+1$. 

We again consider the decomposition (\ref{eq:fsdec}) of $\bm{x}_1$ 
and let $\bm{w}_i=(x_q, x_{q+1}, \dots, x_p)$ be the first 
sticky sequence which is fillable. 
So, $\bm{y}_1$ is of the form 
$\bm{y}_1=\bm{w}_1* \dots *\bm{w'}_i*\dots *\bm{w}_m$, 
where $\bm{w'}_i$ is obtained from $\bm{w}_i$ by inserting 
a projected point. 
In the next step, we obtain $\bm{x}_2$ by removing a point 
$y_{\alpha_1}$ from $\bm{y}_1=(y_0, \dots, y_{n+1})$. 
We will prove $y_{\alpha_1}\in\inter(G)\cup\inter(H)$. 
If not, $y_{\alpha_1}\in K$. In this case, by a 
similar argument used in the proof of acyclicity, it must be 
$\alpha_1=q+1$, and $\bm{x}_2=\bm{x}_1$. Therefore, 
the point of removal is $y_{\alpha_1}\in\inter(G)\cup\inter(H)$. 
Obviously, this procedure stops at most $|\bm{x}_1|_G+|\bm{x}_1|_H$ 
steps. 
\end{proof}

By definition, the matching is defined between light-like sequences 
which contains sticky subsequence. 
Let us denote the set of light-like 
twistable sequences from $a$ to $b$ by 
\begin{equation}
T^\ell (X; a, b):=\{\bm{x}\mid \bm{x}\mbox{ is a light-like, 
twistable sequence from $a$ to $b$ with $d(\bm{x})=\ell$}\}. 
\end{equation}
We also denote the union for all $a, b\in X$ by 
$T^\ell(X):=\bigsqcup_{a, b\in X}T^\ell(X; a, b)$. 
By Proposition \ref{prop:roff}, we 
obtain the following. 
\begin{proposition}
\label{prop:cr}
The set of critical simplices with respect to the projecting matching 
is equal to 
\[
\Deltadot\Cau^\ell(X)\cup T^\ell (X). 
\]
\end{proposition}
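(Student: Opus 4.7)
The plan is to split the simplices of $\Delta\Cau^\ell(X)$ into three mutually exclusive classes according to where the projecting matching of Definition \ref{def:projmatch} acts, and then identify the non-matched simplices. By inspection of Definition \ref{def:projmatch}, the matching pairs only involve light-like sequences, and only those light-like sequences whose first sticky subsequence exists. So every non-light-like sequence is automatically critical: these are exactly the simplices of $\Deltadot\Cau^\ell(X)$.

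For the remaining (light-like) sequences, the key input is Proposition \ref{prop:roff}: a sequence has no sticky subsequence if and only if it is twistable. Consequently a light-like sequence $\bm{x}$ with $d(\bm{x})=\ell$ is either twistable (in which case the matching is not defined on $\bm{x}$ and $\bm{x}$ is critical), or it has a first sticky subsequence (in which case I must exhibit a matching partner and hence conclude $\bm{x}$ is not critical). The twistable light-like sequences are, by definition, exactly the elements of $T^\ell(X)$, which together with $\Deltadot\Cau^\ell(X)$ yields the claimed set.

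The only step that requires real verification is that every light-like sequence with a sticky subsequence actually appears in a matching pair, i.e., that the operations in Definition \ref{def:projmatch} produce a legitimate simplex of $\Delta\Cau^\ell(X)$. Writing the first sticky subsequence as $(x_i,\dots,x_j)$ with (say) $x_i\in H_*$, the interior vertices $x_{i+1},\dots,x_{j-1}$ lie in $K$. The defining property of a biased point gives $d_H(x_i,k)=d_H(x_i,\pi(x_i))+d_H(\pi(x_i),k)$ for every $k\in K$; applying this with $k=x_{i+1}$ (when inserting $\pi(x_i)$) or with $k=x_{i+2}$ when $x_{i+1}=\pi(x_i)$ (for removal) shows that the insertion or deletion preserves the light-like equalities $d(x_{r-1},x_r)=t_r-t_{r-1}$, so one really does get another light-like sequence in $\Delta\Cau^\ell(X)$ of the appropriate dimension. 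The case $x_j\in H_*$ is symmetric.

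The last point to touch on is that the pairing is well-posed: the partner produced from $\bm{x}$ by inserting or removing the relevant projected point shares the same first sticky subsequence (modified only by the single inserted/removed $K$-vertex), so applying Definition \ref{def:projmatch} again to the partner returns $\bm{x}$. This shows no light-like sequence with a sticky subsequence is left unmatched, finishing the identification of critical simplices with $\Deltadot\Cau^\ell(X)\cup T^\ell(X)$. I expect the verification that insertion/deletion preserves the light-like property to be the main technical obstacle, but it reduces directly to the definition of $H_*$ applied to a single step of the sequence.
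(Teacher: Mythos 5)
Your proposal is correct and follows essentially the same route as the paper's (very terse) proof, which simply reduces the proposition to Proposition \ref{prop:roff}: you partition the simplices into non-light-like (automatically critical, equal to $\Deltadot\Cau^\ell(X)$), light-like twistable (critical, equal to $T^\ell(X)$ by definition), and light-like with a first sticky subsequence (matched by Definition \ref{def:projmatch}), then observe this exhausts all cases. The paper does not spell out the well-definedness check you give, having in effect folded it into Proposition \ref{prop:acyclic}, so your verification is a welcome complement rather than a divergence.

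One small imprecision worth flagging: when you verify that inserting or deleting the projected point preserves light-likeness, you invoke the biased-point identity $d_H(x_i,k)=d_H(x_i,\pi(x_i))+d_H(\pi(x_i),k)$, which as stated holds for $k\in K$. This suffices when the relevant adjacent vertex ($x_{i+1}$ in the insertion case, $x_{i+2}$ in the removal case) lies in $K$, i.e.\ when $j>i+1$ (resp.\ $j>i+2$). But when the sticky subsequence is short, that adjacent vertex is the $\inter(G)$-endpoint $x_j$ itself, and then the identity you need is not in $H$ but across the gluing: one must use the stronger fact that a biased point projects to all of $G$, not merely to $K$ — precisely the content of the unlabeled Proposition in \S\ref{sec:gluing}. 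With that reference added, the argument is complete.
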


\subsection{Additivity (Mayer-Vietoris formula)}
\label{sec:mv}

Throughout this section (\S \ref{sec:mv}), it is assumed that 
\[
H_0=\emptyset, 
\]
equivalently, $H_*=H\smallsetminus K$, hence every element in  $\inter(H)$ 
is biased. This assumption is equivalent to 
``$H$ projects to $K$'' (\cite{HW}), which is also equivalent to 
``gated decomposition'' (\cite{bot-kai}). 
Also, as in the previous sections, 
denote the union by $X=G\cup_KH$.

\begin{lemma}
\label{lem:shortcut}
Let $a, c\in K$ and $b\in H_*$. Then $d(a, c)<d(a, b)+d(b, c)$. 
\end{lemma}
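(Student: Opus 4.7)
The plan is to exploit the projection map $\pi(b) \in K$ guaranteed by $b \in H_*$, and apply the defining identity of a projection twice.

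First, I would invoke the hypothesis that $b \in H_*$ to obtain $\pi(b) \in K$ such that $d_H(k, b) = d_H(k, \pi(b)) + d_H(\pi(b), b)$ for every $k \in K$. Since $a, c \in K$, applying this to $k = a$ and $k = c$ yields
\begin{equation*}
d(a, b) = d(a, \pi(b)) + d(\pi(b), b), \qquad d(c, b) = d(c, \pi(b)) + d(\pi(b), b).
\end{equation*}
(Here one uses that $a, c \in K \subset H$ and $b \in H$, so the relevant distances are computed in $H$ and agree with the glued metric $d$ by (\ref{eq:gluemet}).) Adding these two equalities gives
\begin{equation*}
d(a, b) + d(b, c) = d(a, \pi(b)) + d(\pi(b), c) + 2\, d(\pi(b), b).
\end{equation*}

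Next, the triangle inequality applied through $\pi(b) \in K$ yields $d(a, c) \leq d(a, \pi(b)) + d(\pi(b), c)$. Combining with the identity above,
\begin{equation*}
d(a, b) + d(b, c) - d(a, c) \geq 2\, d(\pi(b), b).
\end{equation*}
The proof then concludes by observing $d(\pi(b), b) > 0$: indeed $b \in H_* = H \smallsetminus K$ while $\pi(b) \in K$, so $b \neq \pi(b)$, which forces strict positivity in a metric space.

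There is no real obstacle here; the only subtle point is checking that the identity $d_H(k, b) = d_H(k, \pi(b)) + d_H(\pi(b), b)$ (which is a priori a statement about the metric on $H$) transports cleanly to the glued metric on $X$ for $k \in K$, but this is immediate from the first two cases of (\ref{eq:gluemet}) since $a, c, \pi(b), b$ all lie in $H$.
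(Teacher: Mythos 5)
Your proof is correct and is essentially identical to the paper's: both apply the projection identity $d(k,b)=d(k,\pi(b))+d(\pi(b),b)$ at $k=a$ and $k=c$, use $d(b,\pi(b))>0$ (since $b\notin K$ while $\pi(b)\in K$), and finish with the triangle inequality through $\pi(b)$. Your extra remark about the glued metric restricting to $d_H$ on $H$ is a harmless clarification the paper leaves implicit.
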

\begin{proof}
Since $b\notin K, \pi(b)\in K$, and $d(b, \pi(b))>0$, we have 
\[
\begin{split}
d(a, b)+d(b, c)
&=d(a, \pi(b))+2d(b, \pi(b))+d(\pi(b), c)\\
&>d(a, \pi(b))+d(\pi(b), c)\\
&\geq d(a, c). 
\end{split}
\]
\end{proof}

Let us define the set of simplices $S'(H)\subset\Delta\Cau^\ell(H)$ as 
\begin{equation}
S'(H):=\{((x_i, t_i))_{i=0}^n\in \Delta\Cau^\ell(H)\mid
x_k\in\inter(H)  \mbox{ for some } 0\leq k\leq n\}. 
\end{equation} 
Then obviously, we have 
\begin{equation}
\label{eq:Sprime}
\Delta\Cau^\ell(H)=S'(H)\sqcup\Delta\Cau^\ell(K). 
\end{equation} 
Note that $S'(H)$ is not necessarily a subcomplex of $\Delta\Cau^\ell(H)$. 
However, we have the following. 
\begin{proposition}
$S'(H)\cup\Deltadot\Cau^\ell(H)$ is a subcomplex of $\Delta\Cau^\ell(H)$. 
\end{proposition}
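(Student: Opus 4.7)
The plan is to verify closure under taking faces. Given $\sigma \in S'(H) \cup \Deltadot\Cau^\ell(H)$ sitting inside some piece $\Delta\Cau^\ell(H; a, b)$ and a nonempty face $\tau \subset \sigma$, I want to show $\tau \in S'(H) \cup \Deltadot\Cau^\ell(H)$. The natural dichotomy to exploit is whether or not $\tau$ contains a point of $\inter(H)$: if it does, then $\tau \in S'(H)$ immediately by definition, so the real work lies in the complementary case where every vertex of $\tau$ is in $K$.

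In that complementary case, if $\sigma$ already belongs to $\Deltadot\Cau^\ell(H)$, we are done because $\Deltadot$ is itself a subcomplex (the total length $d$ is monotone under removing points, by iterated triangle inequality), so $d(\tau) \leq d(\sigma) < \ell$. So we may assume $\sigma$ is light-like, which by our hypothesis forces $\sigma \in S'(H)$. Writing $\sigma = ((x_0, t_0), \dots, (x_n, t_n))$, Proposition \ref{prop:can} gives $x_0 = a$, $x_n = b$ and $d(x_0, \dots, x_n) = \ell$, with all consecutive distances $d(x_{j-1}, x_j) > 0$ (using that a chain in the causal poset is strictly time-increasing). Pick a witness $x_k \in \inter(H)$, and write $\tau$ as the subsequence with indices $i_1 < \dots < i_m$. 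Since every $x_{i_r} \in K$ while $x_k \in H \smallsetminus K$, the index $k$ is missing from $\{i_1, \dots, i_m\}$.

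If $i_1 > 0$ or $i_m < n$, then discarding a nonempty prefix or suffix whose consecutive distances are strictly positive already gives $d(\tau) < \ell$, so $\tau \in \Deltadot$. The main obstacle is the remaining case $i_1 = 0$ and $i_m = n$, in which $\tau$ shares both endpoints with $\sigma$, forcing $a, b \in K$; then $0 < k < n$ and there is a unique $s$ with $i_s < k < i_{s+1}$. Here I invoke Lemma \ref{lem:shortcut} applied to the triple $(x_{i_s}, x_k, x_{i_{s+1}})$, with $x_{i_s}, x_{i_{s+1}} \in K$ and $x_k \in H_*$, yielding the strict shortcut
\[
d(x_{i_s}, x_{i_{s+1}}) < d(x_{i_s}, x_k) + d(x_k, x_{i_{s+1}}) \leq d(x_{i_s}, x_{i_s + 1}, \dots, x_{i_{s+1}}).
\]
Combining this strict inequality with the ordinary triangle inequality on each of the other gaps and summing, I obtain $d(\tau) < d(x_0, \dots, x_n) = \ell$, hence $\tau \in \Deltadot$. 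This exhausts the case analysis and shows that $S'(H) \cup \Deltadot\Cau^\ell(H)$ is a subcomplex.
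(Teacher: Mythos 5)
Your proof is correct and follows essentially the same route as the paper's: the dichotomy on whether the face still contains a vertex of $\inter(H)$ (in which case it lies in $S'(H)$) versus losing all such vertices, with Lemma \ref{lem:shortcut} supplying the strict length drop in the latter case. You simply carry the check out for arbitrary faces rather than codimension-one ones and make the endpoint cases ($i_1>0$ or $i_m<n$) explicit, which the paper's terser argument leaves implicit.
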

\begin{proof}
Let 
\[
\bm{x}=((x_0, t_0), \dots, (x_n, t_n))\in S'(H)
%\Delta\Cau^\ell(H; a, b)
\subset\Delta\Cau^\ell(H). 
\]
If $d(x_0, \dots, x_n)<\ell$, then $\bm{x}$ and its subsets (faces) are 
clearly contained in $\Deltadot\Cau^\ell(H)$. Suppose $d(x_0, \dots, x_n)=\ell$. 
Then by the assumption, $x_k\in\inter(H)$ for some $0\leq k\leq n$. 
If  we remove $(x_k, t_k)$, then by Lemma \ref{lem:shortcut}, the resulting 
sequence is contained in $\Deltadot\Cau^\ell(H)$. 
Otherwise, it is contained in $S'(H)$. 
\end{proof}
Now we define 
\begin{equation}
\mathring{\calM}^\ell(H):=
\frac{|S'(H)\cup\Deltadot\Cau^\ell(H)|}{|\Deltadot\Cau^\ell(H)|}. 
\end{equation}
Since $S'(H)$ and $\Delta\Cau^\ell(K)$ do not have common simplices, 
we obtain the following. 
\begin{proposition}
\label{prop:decompH}
We have 
\begin{equation}
\calM^\ell(H)\approx\mathring{\calM}^\ell(H)\vee\calM^\ell(K). 
\end{equation}
\end{proposition}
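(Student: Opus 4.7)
The plan is to realize $\calM^\ell(H)$ as the quotient of two subcomplexes whose intersection maps to the basepoint. Set $A := S'(H) \cup \Deltadot\Cau^\ell(H)$ and $B := \Delta\Cau^\ell(K)$, viewed as subcomplexes of $\Delta\Cau^\ell(H)$. First I would verify the set-theoretic decomposition $\Delta\Cau^\ell(H) = A \cup B$: this is immediate from the disjoint decomposition $\Delta\Cau^\ell(H) = S'(H) \sqcup \Delta\Cau^\ell(K)$ given in (\ref{eq:Sprime}), since $\Deltadot\Cau^\ell(H) \subseteq \Delta\Cau^\ell(H)$.

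Next I would identify the intersection $A \cap B$. Because every simplex of $S'(H)$ contains at least one vertex in $\inter(H) = H \smallsetminus K$, we have $S'(H) \cap \Delta\Cau^\ell(K) = \emptyset$, so
\begin{equation*}
A \cap B = \Deltadot\Cau^\ell(H) \cap \Delta\Cau^\ell(K).
\end{equation*}
Since $K \hookrightarrow H$ is an isometric embedding, the length of a causal sequence with all vertices in $K$ is the same whether computed in $K$ or in $H$, hence this intersection equals $\Deltadot\Cau^\ell(K)$. In particular, $A \cap B \subseteq \Deltadot\Cau^\ell(H)$.

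Finally I would invoke the standard quotient principle: if $X = A \cup B$ is a union of subcomplexes and $C$ is a subcomplex with $A \cap B \subseteq C$, then
\begin{equation*}
X/C \approx (A/(A \cap C)) \vee (B/(B \cap C)),
\end{equation*}
because the images of $A$ and $B$ in $X/C$ meet only at the basepoint. Applying this with $X = \Delta\Cau^\ell(H)$ and $C = \Deltadot\Cau^\ell(H)$, one has $A \cap C = \Deltadot\Cau^\ell(H)$, so $A/(A \cap C) = \mathring{\calM}^\ell(H)$ by definition; and $B \cap C = \Deltadot\Cau^\ell(K)$ by the previous paragraph, so $B/(B \cap C) = \calM^\ell(K)$. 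This yields the claimed homeomorphism.

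The only subtle step is the identification $\Deltadot\Cau^\ell(H) \cap \Delta\Cau^\ell(K) = \Deltadot\Cau^\ell(K)$, which is not formally trivial because the causal posets $\Cau^\ell(H)$ and $\Cau^\ell(K)$ are built from different ambient spaces; however, it reduces at once to the isometry of $K$ in $H$, so I expect no genuine obstacle.
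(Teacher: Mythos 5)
Your proof is correct and follows essentially the same route as the paper: both decompose $\Delta\Cau^\ell(H)$ into the subcomplexes $S'(H)\cup\Deltadot\Cau^\ell(H)$ and $\Delta\Cau^\ell(K)\cup\Deltadot\Cau^\ell(H)$, whose intersection lies in $\Deltadot\Cau^\ell(H)$, so the quotient splits as a wedge. You are in fact slightly more explicit than the paper in verifying $\Delta\Cau^\ell(K)\cap\Deltadot\Cau^\ell(H)=\Deltadot\Cau^\ell(K)$ via the isometric embedding, a point the paper's chain of homeomorphisms uses implicitly.
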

\begin{proof}
\begin{equation}
\begin{split}
\calM^\ell(H)&=\frac{|\Delta\Cau^\ell(H)|}{|\Deltadot\Cau^\ell(H)|}\\
&=\frac{|S'(H)\sqcup\Delta\Cau^\ell(K)|}{|\Deltadot\Cau^\ell(H)|}\\
&\approx\frac{|(S'(H)\cup \Deltadot\Cau^\ell(H))\cup (\Delta\Cau^\ell(K)\cup \Deltadot\Cau^\ell(H))|}{|\Deltadot\Cau^\ell(H)|}\\
&\approx\frac{|S'(H)\cup \Deltadot\Cau^\ell(H)|\cup |\Delta\Cau^\ell(K)\cup \Deltadot\Cau^\ell(H)|}{|\Deltadot\Cau^\ell(H)|}\\
&\approx\frac{|S'(H)\cup \Deltadot\Cau^\ell(H)|}{|\Deltadot\Cau^\ell(H)|}\vee\frac{|\Delta\Cau^\ell(K)\cup \Deltadot\Cau^\ell(H)|}{|\Deltadot\Cau^\ell(H)|}\\
&\approx\mathring{\calM}^\ell(H)\vee\calM^\ell(K). 
\end{split}
\end{equation}
\end{proof}

\begin{theorem}
\label{thm:union}
We have the following homotopy equivalence 
\begin{equation}
\label{eq:uniondec}
\calM^\ell(X)\simeq
\mathring{\calM}^\ell(H)\vee\calM^\ell(G). 
\end{equation}
\end{theorem}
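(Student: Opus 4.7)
The plan is to apply the bounded acyclic projecting matching from Definition~\ref{def:projmatch} together with the infinite-complex discrete Morse theorem (Proposition~\ref{prop:infinite}), reduce $|\Delta\Cau^\ell(X)|$ to its critical subcomplex, and then algebraically identify the resulting quotient with the desired wedge sum.

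First I will exploit the hypothesis $H_0 = \emptyset$. By Proposition~\ref{prop:roff}, any twistable sequence decomposes as a concatenation of flat pieces joined at non-biased points of $H_0$, so when $H_0 = \emptyset$ each twistable sequence is itself flat, i.e.\ contained entirely in $G$ or entirely in $H$. Combined with Proposition~\ref{prop:cr} this shows that the set of critical simplices of the projecting matching equals
\[
S_c = \Deltadot\Cau^\ell(X) \cup T^\ell(X),
\]
where $T^\ell(X)$ consists of light-like sequences lying entirely in $G$ or entirely in $H$. A routine check—a face of a critical light-like sequence is either still light-like on the same side, or has length less than $\ell$ and thus belongs to $\Deltadot\Cau^\ell(X)$—confirms that $S_c$ is a subcomplex. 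Each component $\Delta\Cau^\ell(X; a, b)$ is contractible (its poset has minimum and maximum), so applying Proposition~\ref{prop:infinite} componentwise gives a deformation retraction onto $|S_c|$, and hence
\[
\calM^\ell(X) \simeq |S_c|/|\Deltadot\Cau^\ell(X)|.
\]

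Next I will rewrite $|S_c|$ as the union $|\Delta\Cau^\ell(G)| \cup |\Delta\Cau^\ell(H)| \cup |\Deltadot\Cau^\ell(X)|$ and compute the quotient using the intersections
$\Delta\Cau^\ell(G) \cap \Deltadot\Cau^\ell(X) = \Deltadot\Cau^\ell(G)$,
$\Delta\Cau^\ell(H) \cap \Deltadot\Cau^\ell(X) = \Deltadot\Cau^\ell(H)$,
and $\Delta\Cau^\ell(G) \cap \Delta\Cau^\ell(H) = \Delta\Cau^\ell(K)$. The standard identity $(A \cup B)/B \approx A/(A\cap B)$ applied to each piece, together with the fact that the quotient of a simplicial pushout is the pushout of the quotients, yields
\[
|S_c|/|\Deltadot\Cau^\ell(X)| \approx \calM^\ell(G) \cup_{\calM^\ell(K)} \calM^\ell(H),
\]
the pushout along the natural inclusions coming from $K \hookrightarrow G$ and $K \hookrightarrow H$. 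Finally, Proposition~\ref{prop:decompH} gives $\calM^\ell(H) \approx \mathring{\calM}^\ell(H) \vee \calM^\ell(K)$ with $\calM^\ell(K)$ as a wedge summand; substituting into the pushout and using that $\mathring{\calM}^\ell(H)$ meets $\calM^\ell(K)$ only at the basepoint collapses it to $\calM^\ell(G) \vee \mathring{\calM}^\ell(H)$.

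The main technical point is the pushout identification and its simplification via Proposition~\ref{prop:decompH}. I will need to verify that the inclusion $\calM^\ell(K) \hookrightarrow \calM^\ell(H)$ provided by the wedge decomposition of Proposition~\ref{prop:decompH} agrees with the one induced simplicially by $\Delta\Cau^\ell(K) \hookrightarrow \Delta\Cau^\ell(H)$; once this compatibility is in hand, the reduction $A \cup_B (C \vee B) \approx A \vee C$ is an elementary basepoint-preserving identification.
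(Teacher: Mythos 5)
Your proposal is correct and follows essentially the same route as the paper: both rely on the projecting matching being a bounded acyclic matching, identify the critical subcomplex as $\Deltadot\Cau^\ell(X)\cup\Delta\Cau^\ell(G)\cup\Delta\Cau^\ell(H)$ (your $T^\ell(X)$ with $H_0=\emptyset$ is exactly the set of light-like sequences lying entirely in $G$ or in $H$), invoke Proposition~\ref{prop:infinite} to deformation retract, and then split off $\mathring{\calM}^\ell(H)$ via the decomposition $\Delta\Cau^\ell(H)=S'(H)\sqcup\Delta\Cau^\ell(K)$ and Proposition~\ref{prop:decompH}. Your pushout formulation $\calM^\ell(G)\cup_{\calM^\ell(K)}\calM^\ell(H)$ is just a more explicit phrasing of the paper's step of absorbing $\Delta\Cau^\ell(K)$ into $\Delta\Cau^\ell(G)$, and the compatibility you flag holds because all the maps involved are inclusions of subcomplexes.
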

\begin{proof}
Let us define the set of simplices $S'(G, H)\subset\Delta\Cau^\ell(G\cup H)$ as 
\begin{equation}
S'(G, H):=\left\{((x_i, t_i))_{i=0}^n\in \Delta\Cau^\ell(X)\left|
\begin{array}{l}
d(x_0, \dots, x_n)=\ell, \\
x_k\in\inter(G),  x_{k'}\in\inter(H)\\ 
\mbox{for some } 0\leq k, k'\leq n
\end{array}
\right.\right\}. 
\end{equation}
Since $\inter(H)=H_*$, a sequence $(x_0, \dots, x_n)$ contains a sticky 
subsequence if and only if $\exists k, k'$ such that $x_k\in\inter(G)$ and 
$x_{k'}\in\inter(H)$. Therefore, 
$S'(G, H)$ is the set of light-like sequences that contains sticky subsequences. 
We have the following decomposition 
\begin{equation}
\label{eq:Xdecomposition}
\Delta\Cau^\ell(X)=
\left(
\Deltadot\Cau^\ell(X)\cup
\Delta\Cau^\ell(G)\cup
\Delta\Cau^\ell(H)
\right)\sqcup
S'(G, H). 
\end{equation}
Note that $\Deltadot\Cau^\ell(X)\cup
\Delta\Cau^\ell(G)\cup
\Delta\Cau^\ell(H)$ is a subcomplex of $\Delta\Cau^\ell(X)$. 
The projecting matching (Definition \ref{def:projmatch}) gives an 
acyclic matching on $\Delta\Cau^\ell(X)$ 
(Proposition\ref{prop:acyclic}). 
More precisely, the projecting matching is the set of pairs of 
simplices in $S'(G, H)$. Furthermore, $S'(G, H)$ does not  
contain critical cells. 
Therefore, by the decomposition (\ref{eq:Xdecomposition}) and 
Proposition \ref{prop:infinite}, 
$|\Deltadot\Cau^\ell(X)\cup\Delta\Cau^\ell(G)
\cup\Delta\Cau^\ell(H)|$ 
is a deformation retract of $|\Delta\Cau^\ell(X)|$, 
which induces a homotopy equivalence 
\begin{equation}
|\calM^\ell(X)|\simeq
\frac{|\Deltadot\Cau^\ell(X)\cup\Delta\Cau^\ell(G)
\cup\Delta\Cau^\ell(H)|}{|\Deltadot\Cau^\ell(X)|}. 
\end{equation}
Using the decomposition (\ref{eq:Sprime}) and 
$\Delta\Cau^\ell(K)\subset\Delta\Cau^\ell(G)$, 
we have 
\begin{equation}
|\calM^\ell(X)|\simeq
\frac{\left|\Deltadot\Cau^\ell(X)\cup\left(\Delta\Cau^\ell(G)\sqcup S'(H)\right)\right|}{|\Deltadot\Cau^\ell(X)|}. 
\end{equation}
By an argument similar to the proof of 
Proposition \ref{prop:decompH}, we have 
the homotopy equivalence (\ref{eq:uniondec})
\end{proof}
This theorem states that the magnitude homotopy type of the union is 
depending only on $\mathring{\calM}^\ell(H)$ and $\calM^\ell(G)$, 
as long as $H$ projects $K$. 
\begin{corollary}
Let $G, H, i_H:K\hookrightarrow H$ be as above 
(we assume $H_0=\emptyset$). 
Let $i_K, i'_K: K\hookrightarrow G$ be 
two isometric embeddings of $K$ into $G$. 
We construct $X$ and $X'$ by gluing $G$ and $H$ 
using $(i_K, i_H)$ and $(i'_K, i_H)$, respectively. 
Then $\calM^\ell(X)\simeq\calM^\ell(X')$. 
(Figure \ref{fig:maghomeq}) 
\end{corollary}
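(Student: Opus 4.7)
The plan is to apply Theorem \ref{thm:union} to both gluings and observe that the two wedge summands on the right-hand side depend only on $G$ and on the pair $(H, i_H)$ respectively, and therefore do not see the embedding of $K$ into $G$.

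First, I would check that Theorem \ref{thm:union} is applicable to both $X = G \cup_{(i_K, i_H)} H$ and $X' = G \cup_{(i'_K, i_H)} H$. The standing hypothesis is $H_0 = \emptyset$, equivalently, every element of $\inter(H)$ is biased in the sense that it projects to $K$. This is a statement purely about the metric space $H$ together with its isometrically embedded subset $i_H(K)$; since $i_H$ is common to both constructions, the hypothesis holds in both cases regardless of how $K$ is embedded into $G$. Similarly, $K \subset X$ is isometric to $K \subset X'$ via both projections, and the induced metrics on $H \subset X$ and $H \subset X'$ agree with the original metric on $H$ by (\ref{eq:gluemet}).

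Next I would apply Theorem \ref{thm:union} to each gluing. This yields
\begin{equation*}
\calM^\ell(X) \simeq \mathring{\calM}^\ell(H) \vee \calM^\ell(G), \qquad
\calM^\ell(X') \simeq \mathring{\calM}^\ell(H) \vee \calM^\ell(G).
\end{equation*}
The right-hand side of the first equivalence is identical to that of the second: the summand $\calM^\ell(G)$ is built from the causal poset of $G$ alone and is unaffected by how $K$ is glued in, while $\mathring{\calM}^\ell(H)$ is defined in terms of $\Delta\Cau^\ell(H)$, $S'(H)$ and $\Deltadot\Cau^\ell(H)$, all of which depend only on the metric space $H$ (with its distinguished subset $i_H(K)$). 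Composing the two homotopy equivalences gives $\calM^\ell(X) \simeq \calM^\ell(X')$.

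There is essentially no obstacle beyond unwinding the definitions; the content of the corollary has already been extracted as the wedge decomposition in Theorem \ref{thm:union}. The only point that requires a moment of care is making explicit that the summands in the wedge decomposition are genuinely intrinsic invariants of $G$ and of $(H,i_H)$, which follows directly from their definitions in \S\ref{sec:const} and \S\ref{sec:mv}. In particular, no further use of the acyclic projecting matching is needed at this stage.
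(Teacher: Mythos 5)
Your proposal is correct and is exactly the argument the paper intends: the corollary is stated immediately after Theorem \ref{thm:union} precisely because both gluings satisfy its hypothesis (which depends only on $(H, i_H)$) and both wedge summands $\mathring{\calM}^\ell(H)$ and $\calM^\ell(G)$ are intrinsic to $(H, i_H(K))$ and $G$ respectively. No further argument is needed.
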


\begin{figure}[htbp]
\centering
\begin{tikzpicture}%[scale=1.2]

%\draw [help lines] (0,0) grid (12,6);%(0,0)から(10,4)までの"細線の方眼"

\filldraw[fill=black, draw=black] (0,1) circle (3pt) ;
\filldraw[fill=black, draw=black] (1,1) circle (3pt) ;
\filldraw[fill=black, draw=black] (2,1) circle (3pt) ;
\filldraw[fill=black, draw=black] (3,1) circle (3pt) ;
\filldraw[fill=black, draw=black] (1,0) circle (3pt) ;
\filldraw[fill=black, draw=black] (2,0) circle (3pt) ;
\draw[thick] (0,1)--(1,0)--(1,1)--(1,0)--(2,0)--(3,1)--cycle;

\filldraw[fill=black, draw=black] (0,2) circle (3pt) ;
\filldraw[fill=black, draw=black] (1,2) circle (3pt) ;
\filldraw[fill=black, draw=black] (0,3) circle (3pt) ;
\filldraw[fill=black, draw=black] (1,3) circle (3pt) ;
\draw[thick] (0,1)--(0,3)--(1,3)--(1,1);

\filldraw[fill=black, draw=black] (4,1) circle (3pt) ;
\filldraw[fill=black, draw=black] (5,1) circle (3pt) ;
\filldraw[fill=black, draw=black] (6,1) circle (3pt) ;
\filldraw[fill=black, draw=black] (7,1) circle (3pt) ;
\filldraw[fill=black, draw=black] (5,0) circle (3pt) ;
\filldraw[fill=black, draw=black] (6,0) circle (3pt) ;
\draw[thick] (4,1)--(5,0)--(5,1)--(5,0)--(6,0)--(7,1)--cycle;

\filldraw[fill=black, draw=black] (5,2) circle (3pt) ;
\filldraw[fill=black, draw=black] (6,2) circle (3pt) ;
\filldraw[fill=black, draw=black] (5,3) circle (3pt) ;
\filldraw[fill=black, draw=black] (6,3) circle (3pt) ;
\draw[thick] (5,1)--(5,3)--(6,3)--(6,1);

\filldraw[fill=black, draw=black] (8,1) circle (3pt) ;
\filldraw[fill=black, draw=black] (9,1) circle (3pt) ;
\filldraw[fill=black, draw=black] (10,1) circle (3pt) ;
\filldraw[fill=black, draw=black] (11,1) circle (3pt) ;
\filldraw[fill=black, draw=black] (9,0) circle (3pt) ;
\filldraw[fill=black, draw=black] (10,0) circle (3pt) ;
\draw[thick] (8,1)--(9,0)--(9,1)--(9,0)--(10,0)--(11,1)--cycle;

\filldraw[fill=black, draw=black] (10,2) circle (3pt) ;
\filldraw[fill=black, draw=black] (11,2) circle (3pt) ;
\filldraw[fill=black, draw=black] (10,3) circle (3pt) ;
\filldraw[fill=black, draw=black] (11,3) circle (3pt) ;
\draw[thick] (10,1)--(10,3)--(11,3)--(11,1);

\end{tikzpicture}
\caption{Magnitude homotopy equivalent graphs}
\label{fig:maghomeq}
\end{figure}
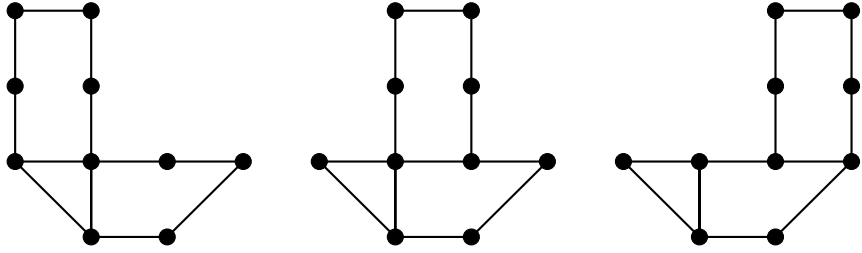

From Proposition \ref{prop:decompH} and Theorem \ref{thm:union}, 
we have 
the following Mayer-Vietoris type result (\cite{bot-kai, HW}). 
\begin{corollary}
\label{cor:mv}
Under the same assumption as Theorem \ref{thm:union}, we have the following. 
\begin{itemize}
\item[(1)] 
$\calM^\ell(X)\vee\calM^\ell(K)\simeq\calM^\ell(G)\vee\calM^\ell(H)$. 
\item[(2)] 
The inclusions $K\hookrightarrow G, H$ and 
$G, H\hookrightarrow X$ induces 
%There exists 
a split short exact sequence 
\[
0\longrightarrow\MH^\ell(K)
\longrightarrow
\MH^\ell(G)\oplus\MH^\ell(H)
\longrightarrow
\MH^\ell(X)
\longrightarrow
0. 
\]
\end{itemize}
\end{corollary}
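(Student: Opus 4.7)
The plan is to assemble Corollary \ref{cor:mv} directly from the two structural decompositions already in hand, namely Proposition \ref{prop:decompH} and Theorem \ref{thm:union}, and then pass to reduced homology using Theorem \ref{thm:mainisom}. Since everything is formal manipulation after those two results are in place, the proof should be very short.

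For part $(1)$, I would simply write down the chain of homotopy equivalences
\begin{equation*}
\calM^\ell(X)\vee\calM^\ell(K)
\simeq
\left(\mathring{\calM}^\ell(H)\vee\calM^\ell(G)\right)\vee\calM^\ell(K)
\simeq
\calM^\ell(G)\vee\left(\mathring{\calM}^\ell(H)\vee\calM^\ell(K)\right)
\simeq
\calM^\ell(G)\vee\calM^\ell(H),
\end{equation*}
using Theorem \ref{thm:union} for the first equivalence, commutativity and associativity of the wedge sum for the second, and Proposition \ref{prop:decompH} for the third.

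For part $(2)$, the strategy is to take reduced homology of the wedge equivalence above. Since $\widetilde{H}_*$ converts wedge sums into direct sums, and Theorem \ref{thm:mainisom} identifies $\widetilde{H}_k(\calM^\ell(-))$ with $\MH^\ell_k(-)$, the equivalence in $(1)$ yields an abstract isomorphism $\MH^\ell_*(X)\oplus\MH^\ell_*(K)\cong\MH^\ell_*(G)\oplus\MH^\ell_*(H)$ in every degree. The corresponding split short exact sequence
\begin{equation*}
0\longrightarrow\MH^\ell(K)\longrightarrow\MH^\ell(G)\oplus\MH^\ell(H)\longrightarrow\MH^\ell(X)\longrightarrow 0
\end{equation*}
then follows at the level of abelian groups.

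The one genuine thing to check, and where I expect the only mild obstacle, is that the maps in this short exact sequence can be taken to be the ones induced by the inclusions $K\hookrightarrow G,H$ and $G,H\hookrightarrow X$, with appropriate signs. This requires tracking the functoriality of $\calM^\ell$ through the constructions in Proposition \ref{prop:decompH} and Theorem \ref{thm:union}. Concretely, the inclusion $\Delta\Cau^\ell(K)\hookrightarrow\Delta\Cau^\ell(H)$ used in Proposition \ref{prop:decompH} induces the map $\calM^\ell(K)\to\calM^\ell(H)$, and the quotient collapse to $\mathring{\calM}^\ell(H)$ is its cofiber; similarly the subcomplex $\Delta\Cau^\ell(G)\cup\Delta\Cau^\ell(H)\subset\Delta\Cau^\ell(X)$ used in the proof of Theorem \ref{thm:union} receives the inclusion-induced maps from $\calM^\ell(G)$ and $\calM^\ell(H)$. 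Putting these together produces a Mayer–Vietoris style long exact sequence associated with the pushout $X=G\cup_K H$, which because of the splittings visible in $(1)$ collapses to the claimed short exact sequence with the inclusion-induced boundary maps. Once the identifications are spelled out, the splitting is automatic from $(1)$.
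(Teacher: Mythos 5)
Your proposal is correct and takes essentially the same route as the paper, which offers no explicit proof beyond the remark that the corollary follows from Proposition \ref{prop:decompH} and Theorem \ref{thm:union}; your chain of wedge equivalences for $(1)$ and the passage to reduced homology via Theorem \ref{thm:mainisom} for $(2)$ is exactly the intended argument. Your added discussion of why the maps in $(2)$ are the inclusion-induced ones is a detail the paper itself glosses over, and your sketch of how to verify it is reasonable.
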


\subsection{Invariance under sycamore twists}
\label{sec:inv}

In this section, we will compare the spaces obtained by gluing $G$ and $H$ 
in different ways. We return to the setup of 
\S \ref{sec:gluing} and \S \ref{sec:proj}. 
Let $H\stackrel{i_H}{\hookleftarrow}K\stackrel{i_G}{\hookrightarrow}G$ 
be isometric embeddings. Let $H_*\subset H\smallsetminus i_H(K)$ 
be the set of all biased points and 
$H_0:=H\smallsetminus (i_H(K)\sqcup H_*)$. 

\begin{definition}
Let $\alpha : K \longrightarrow K$ be any isometry. Assume that 
\begin{equation}
\label{eq:neutral}
d(h, i_H(k))=d(h, i_H(\alpha(k))), 
\end{equation}
for every $h\in H_0$ and $k\in K$. 
Construct a metric space $X$ by taking the disjoint union 
$G\sqcup H$ and identifying 
$i_G(k)$ and $i_H(k)$ ($k\in K$). Construct another metric space 
$Y$ by 
identifying $i_G(k)$ and $i_H(\alpha(k))$ ($k\in K$). We say that $X$ and 
$Y$ differ by a \emph{sycamore twist} (\cite[Definition 3.3]{roff}). 
\end{definition}

\begin{example}
Let $G$ and $H$ be the metric spaces defined by 
graphs $(V_G, E_G)$ and 
$(V_H, E_H)$. Let $\{g_+, g_-\}\in E_G$ and $\{h_+,h_-\}\in E_H$ 
be edges. 
Form a new graph $X$ by identifying $g_\pm$ with $h_\pm$. 
Similarly, form 
a new graph $Y$ by identifying $g_\pm$ with $h_\mp$. 
Then $X$ and $Y$ 
are said to differ by a Whitney twist, which is a 
special case of a sycamore twist. 
In \cite{L}, it is proved that $\Mag(X)=\Mag(Y)$ if $X$ and $Y$ 
differ by a Whitney twist. 
\end{example}

\begin{example}
Let $G$ and $H$ be graphs, 
and $p_G, q_G\in G$ and $p_H, q_H\in H$ be vertices 
as in Figure \ref{fig:sycamore}. Let $K=\{p, q\}$ be 
the metric space consisting of two points with  $d(p, q)=2$. 
Since $d_G(p_G, q_G)=d_H(p_H, q_H)=2$, we have isometric embeddings 
$G\stackrel{i_G}{\hookleftarrow}K\stackrel{i_H}{\hookrightarrow}H$. 
Let $\alpha:K\rightarrow K$ be the map $\alpha(p)=q, \alpha(q)=p$. 
Then the white vertices in Figure \ref{fig:sycamore} are the neutral points $H_0$. 
Clearly, the relation (\ref{eq:neutral}) is satisfied. Therefore, 
$X$ and $Y$ in Figure \ref{fig:sycamore} deffer by sycamore twist. 
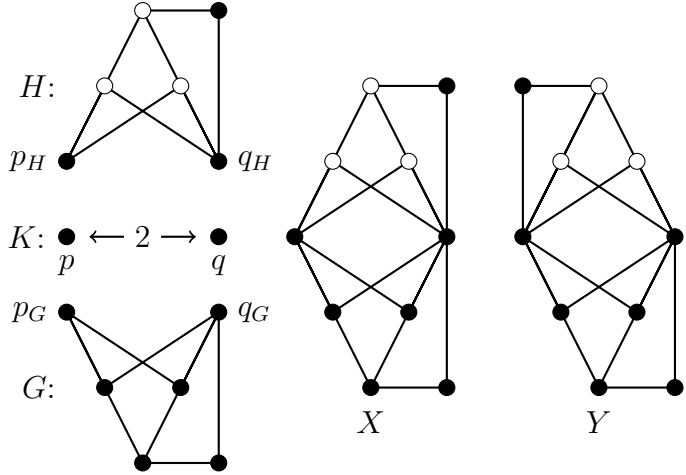
\begin{figure}[htbp]
\centering
\begin{tikzpicture}%[scale=1.2]

%\draw [help lines] (0,0) grid (9,6);%(0,0)から(10,4)までの"細線の方眼"

%%K 
\draw[<->, thick] (0.3, 3)-- node[fill=white] {$2$} (1.7, 3);
\filldraw[fill=black, draw=black] (0, 3) node [left] {$K$:\ \ } node[below=3pt] {$p$} circle (3pt); 
\filldraw[fill=black, draw=black] (2, 3) node[below=3pt] {$q$} circle (3pt); 
\draw (0, 1) node[left]{$G$:\ }; 
\draw (0, 5) node[left]{$H$:\ }; 
%\draw (0, 1) node[left]{$G$:\ }; 

%%G 
\coordinate (G1) at (0,2);
\coordinate (G2) at (2,2);
\coordinate (G3) at (0.5,1);
\coordinate (G4) at (1.5,1);
\coordinate (G5) at (1,0);
\coordinate (G6) at (2,0);

\draw[thick] (G1)--(G3)--(G2)--(G4)--(G1)--(G5)--(G6)--(G2)--(G5);

\filldraw[fill=black, draw=black] (G1) node[left=3pt] {$p_G$} circle (3pt) ;
\filldraw[fill=black, draw=black] (G2) node[right=3pt] {$q_G$} circle (3pt) ;
\filldraw[fill=black, draw=black] (G3) circle (3pt) ;
\filldraw[fill=black, draw=black] (G4) circle (3pt) ;
\filldraw[fill=black, draw=black] (G5) circle (3pt) ;
\filldraw[fill=black, draw=black] (G6) circle (3pt) ;

%%H 

\coordinate (H1) at (0,4);
\coordinate (H2) at (2,4);
\coordinate (H3) at (0.5,5);
\coordinate (H4) at (1.5,5);
\coordinate (H5) at (1,6);
\coordinate (H6) at (2,6);

\draw[thick] (H1)--(H3)--(H2)--(H4)--(H1)--(H5)--(H6)--(H2)--(H5);

\filldraw[fill=black, draw=black] (H1) node[left=3pt] {$p_H$} circle (3pt) ;
\filldraw[fill=black, draw=black] (H2) node[right=3pt] {$q_H$} circle (3pt) ;
\filldraw[fill=white, draw=black] (H3) circle (3pt) ;
\filldraw[fill=white, draw=black] (H4) circle (3pt) ;
\filldraw[fill=white, draw=black] (H5) circle (3pt) ;
\filldraw[fill=black, draw=black] (H6) circle (3pt) ;

%%X 
\coordinate (X1) at (3,3);
\coordinate (X2) at (5,3);
\coordinate (X3) at (3.5,2);
\coordinate (X4) at (4.5,2);
\coordinate (X5) at (4,1);
\coordinate (X6) at (5,1);
\coordinate (X7) at (3.5,4);
\coordinate (X8) at (4.5,4);
\coordinate (X9) at (4,5);
\coordinate (X10) at (5,5);

\draw (X5) node[below=5pt]{$X$}; 

\draw[thick] (X1)--(X3)--(X2)--(X4)--(X1)--(X5)--(X6)--(X2)--(X5);
\draw[thick] (X1)--(X7)--(X2)--(X8)--(X1)--(X9)--(X10)--(X2)--(X9);

\filldraw[fill=black, draw=black] (X1) circle (3pt) ;
\filldraw[fill=black, draw=black] (X2) circle (3pt) ;
\filldraw[fill=black, draw=black] (X3) circle (3pt) ;
\filldraw[fill=black, draw=black] (X4) circle (3pt) ;
\filldraw[fill=black, draw=black] (X5) circle (3pt) ;
\filldraw[fill=black, draw=black] (X6) circle (3pt) ;
\filldraw[fill=white, draw=black] (X7) circle (3pt) ;
\filldraw[fill=white, draw=black] (X8) circle (3pt) ;
\filldraw[fill=white, draw=black] (X9) circle (3pt) ;
\filldraw[fill=black, draw=black] (X10) circle (3pt) ;

%%YY
\coordinate (Y1) at (6,3);
\coordinate (Y2) at (8,3);
\coordinate (Y3) at (6.5,2);
\coordinate (Y4) at (7.5,2);
\coordinate (Y5) at (7,1);
\coordinate (Y6) at (8,1);
\coordinate (Y7) at (6.5,4);
\coordinate (Y8) at (7.5,4);
\coordinate (Y9) at (7,5);
\coordinate (Y10) at (6,5);

\draw (Y5) node[below=5pt]{$Y$}; 

\draw[thick] (Y1)--(Y3)--(Y2)--(Y4)--(Y1)--(Y5)--(Y6)--(Y2)--(Y5);
\draw[thick] (Y1)--(Y7)--(Y2)--(Y8)--(Y1)--(Y9)--(Y10)--(Y1)--(Y9)--(Y2);

\filldraw[fill=black, draw=black] (Y1) circle (3pt) ;
\filldraw[fill=black, draw=black] (Y2) circle (3pt) ;
\filldraw[fill=black, draw=black] (Y3) circle (3pt) ;
\filldraw[fill=black, draw=black] (Y4) circle (3pt) ;
\filldraw[fill=black, draw=black] (Y5) circle (3pt) ;
\filldraw[fill=black, draw=black] (Y6) circle (3pt) ;
\filldraw[fill=white, draw=black] (Y7) circle (3pt) ;
\filldraw[fill=white, draw=black] (Y8) circle (3pt) ;
\filldraw[fill=white, draw=black] (Y9) circle (3pt) ;
\filldraw[fill=black, draw=black] (Y10) circle (3pt) ;

\end{tikzpicture}
\caption{An example of sycamore twist.}
\label{fig:sycamore}
\end{figure}
\end{example}

\begin{theorem}
\label{thm:sycamore}
Suppose metric spaces $X$ and $Y$ differ by a sycamore twist. 
Then, there is a bijection between critical cells of projecting matching 
of $\calM^\ell(X)$ and $\calM^\ell(Y)$ which preserves the dimensions 
of cells. 
\end{theorem}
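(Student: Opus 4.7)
The plan is to construct a dimension-preserving bijection $\phi: T^\ell(X) \to T^\ell(Y)$. By Proposition \ref{prop:cr}, the critical cells of the projecting matching are $\Deltadot\Cau^\ell \cup T^\ell$. Since the subcomplex $\Deltadot\Cau^\ell$ is collapsed to the basepoint in $\calM^\ell$, the basepoints on the two sides match trivially, and the theorem reduces to producing such a $\phi$ preserving the number of points in each sequence.

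The key observation is that the stratification of points into $\inter(G)$, $K$, $H_0$, and $H_*$ depends only on the metric structures of $G$, $H$, and $K$, not on the chosen gluing; hence the notions of sticky subsequence and twistable sequence from Definition \ref{def:projmatch} and Proposition \ref{prop:roff} coincide when transferred across the sycamore twist. Consequently, $\phi$ can act as the identity on labels in $\inter(G) \cup H_0 \cup H_*$, and only the $K$-labels require adjustment. Given $\bm{x} \in T^\ell(X)$, I would use the flat decomposition $\bm{x} = \bm{x}_0 * \cdots * \bm{x}_m$ of Proposition \ref{prop:roff} (with concatenation in $H_0$) and relabel each $K$-point via $\alpha^{-1}$ exactly when its flat piece contains an $H_*$-point, fixing a convention for ambiguous pieces contained entirely in $H_0 \cup K$ to ensure well-definedness.

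Length preservation then reduces to a case analysis on consecutive pairs within each flat piece: pairs both in $G$ or both in $H$ are trivially unchanged; $\inter(G)$-to-$H_0$ and $K$-to-$H_0$ distances agree in $X$ and $Y$ via the sycamore hypothesis $d(h, i_H(k)) = d(h, i_H(\alpha(k)))$ for $h \in H_0$; and $K$-to-$H_*$ distances are preserved by construction of the $\alpha^{-1}$-relabeling. The main obstacle I anticipate is the $\inter(G)$-to-$H_*$ case, whose cross-gluing distances genuinely differ in $X$ and $Y$. The structural resolution is that such a consecutive pair cannot appear within any flat piece (since $\inter(G) \not\subset H$ and $H_* \not\subset G \cup H_0$), and concatenation points lie in $H_0$; therefore this configuration is forbidden in any twistable sequence. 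This is precisely why twistability is the correct invariance class for the sycamore twist, and the map $\phi$, together with its analogous inverse constructed using $\alpha$, will yield the desired bijection.
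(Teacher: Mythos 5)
Your proposal is correct and follows essentially the same route as the paper: the paper defines maps $\tau_G, \tau_H : X \to Y$ (which keep $G$-labels resp.\ $H$-labels on $K$), applies $\tau_G$ to flat pieces contained in $G \cup H_0$ and $\tau_H$ otherwise, and your rule of adjusting $K$-labels by $\alpha^{-1}$ exactly on the flat pieces meeting $H_*$ is the same map in different notation. Your explicit worry about well-definedness for pieces lying in $K \cup H_0$ is actually a point on which the paper is terse — the flat decomposition in Proposition \ref{prop:roff} is not unique and different choices can relabel a $K$-point differently (sending $\bm{x}$ to different though length-equal images), so one does need to fix a convention such as the specific decomposition produced in the proof of Proposition \ref{prop:roff} to obtain an honest bijection.
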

\begin{proof}%[Proof of Theorem \ref{thm:sycamore}]
In view of Proposition \ref{prop:cr}, it is enough to show that there exists a 
bijection 
\[
T^\ell(X)\stackrel{\cong}{\longrightarrow} T^\ell(Y) 
\]
which preserves the dimensions of cells (degrees of sequences). 
This claim can be proved in a similar manner as \cite[Proposition 5.6]{roff}. 
First let us define maps $\tau_G, \tau_H:X\longrightarrow Y$ 
as follows (see also Figure \ref{fig:tau}). 
(Note that, here, $X$ and $Y$ are defined as 
$G\sqcup H/\sim$, where $\sim$ is certain equivalence relation. 
Therefore, any point in $X$ can be expressed as $\overline{x}$ with 
$x\in G\sqcup H$. One can easily check the following is well-defined 
on $i_G(K)\sqcup i_H(K)$.) 
\begin{equation}
\tau_G(\overline{x})=
\begin{cases}
\overline{x}, \mbox{ if $x\in G\smallsetminus i_G(K)$}, \\
\overline{x}, \mbox{ if $x\in i_G(K)$}, \\
\overline{x}, \mbox{ if $x\in H\smallsetminus i_H(K)$}, \\
\overline{i_H(\alpha(i_H^{-1}(x)))}, \mbox{ if $x\in i_H(K)$}, 
\end{cases}
\tau_H(\overline{x})=
\begin{cases}
\overline{x}, \mbox{ if $x\in G\smallsetminus i_G(K)$}, \\
\overline{i_G(\alpha^{-1}(i_G^{-1}(x)))}, \mbox{ if $x\in i_G(K)$}, \\
\overline{x}, \mbox{ if $x\in H\smallsetminus i_H(K)$},  \\
\overline{x}, \mbox{ if $x\in i_H(K)$}. 
\end{cases}
\end{equation}
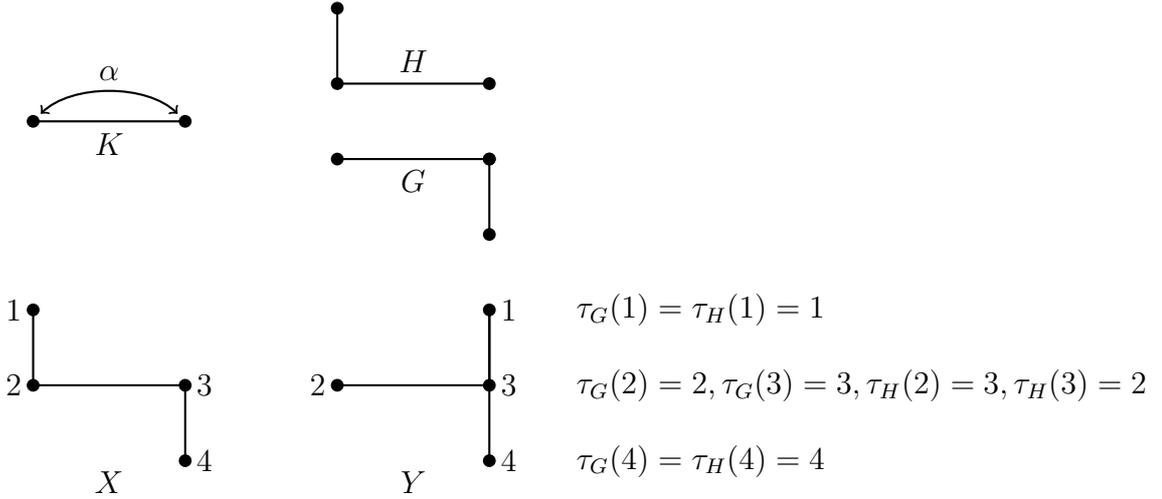
\begin{figure}[htbp]
\centering
\begin{tikzpicture}%[scale=1.2]

%\draw [help lines] (0,0) grid (9,6);%(0,0)から(10,4)までの"細線の方眼"

%K
\filldraw[thick] (0, 4.5) circle (2pt); 
\filldraw[thick] (2, 4.5) circle (2pt); 
\draw[thick] (0, 4.5)-- node[below] {$K$} (2, 4.5); 
\draw[thick, <->] (0.1,4.6) .. node[above] {$\alpha$} controls (0.5,5) and (1.5,5) .. (1.9,4.6); 

%w (0,0) .. controls (2,0) and (3,1) .. (5,1);

%G
\draw[thick] (4,4)-- node[below] {$G$} (6,4)--(6,3); 
\filldraw[thick] (4, 4) circle (2pt); 
\filldraw[thick] (6, 4) circle (2pt); 
\filldraw[thick] (6, 3) circle (2pt); 

%H
\filldraw[thick] (4, 5) circle (2pt); 
\filldraw[thick] (6,5) circle (2pt); 
\filldraw[thick] (4, 6) circle (2pt); 
\filldraw[thick] (6, 4) circle (2pt); 

\draw[thick] (4,6)--(4,5)-- node[above] {$H$} (6,5); 

%X
\draw (1,0) node[below] {$X$}; 
\filldraw[thick] (0, 2) node[left] {$1$} circle (2pt); 
\filldraw[thick] (0, 1) node[left] {$2$} circle (2pt); 
\filldraw[thick] (2, 1) node[right] {$3$} circle (2pt); 
\filldraw[thick] (2, 0) node[right] {$4$} circle (2pt); 
\draw[thick] (0,2)--(0,1)--(2,1)--(2,0); 

%Y
\draw (5,0) node[below] {$Y$}; 
\filldraw[thick] (4, 1) node[left] {$2$} circle (2pt); 
\filldraw[thick] (6, 1) node[right] {$3$} circle (2pt); 
\filldraw[thick] (6, 2) node[right] {$1$} circle (2pt); 
\filldraw[thick] (6, 0) node[right] {$4$} circle (2pt); 
\draw[thick] (4,1)--(6,1)--(6,2)--(6,0); 

%tauG tauH

\draw (7,2) node [right] {$\tau_G(1)=\tau_H(1)=1$}; 
\draw (7,1) node [right] {$\tau_G(2)=2, \tau_G(3)=3, \tau_H(2)=3, \tau_H(3)=2$}; 
%\draw (7,1) node [right] {$\tau_G(2)=2, \tau_G(3)=3$}; 
\draw (7,0) node [right] {$\tau_G(4)=\tau_H(4)=4$}; 

\end{tikzpicture}
\caption{The maps $\tau_G$ and $\tau_H$.}
\label{fig:tau}
\end{figure}

By Proposition \ref{prop:roff}, $\bm{x}\in T^\ell(X)$ can be expressed as a 
concatenation 
\begin{equation}
\bm{x}=\bm{x}_1*\bm{x}_2*\cdots *\bm{x}_m, 
\end{equation}
of flat sequences $\bm{x}_i$ 
such that each point of concatenation is contained in $H_0$. 
If $\bm{x}_i$ is contained in $G\cup H_0$, then by the assumption (\ref{eq:neutral}) 
of the sycamore twist, $\tau_G(\bm{x}_i)$ has the same length with $\bm{x}_i$. 
Define $\tau(\bm{x}_i)$ by 
\begin{equation}
\tau(\bm{x}_i):=
\begin{cases}
\tau_G(\bm{x}_i), \mbox{ if $\bm{x}\subset G\cup H_0$}, \\
\tau_H(\bm{x}_i), \mbox{ otherwise, }
\end{cases}
\end{equation}
and 
\begin{equation}
\tau(\bm{x}):=\tau(\bm{x}_1)*\cdots *\tau(\bm{x}_m). 
\end{equation}
This gives a desired bijection $T^\ell(X)\longrightarrow T^\ell(Y)$. 
\end{proof}

\begin{corollary}
Suppose $G$ and $H$ are finite metric space. Let $X$ and $Y$ differ by a sycamore twist. Then, 
\begin{itemize}
\item[$(1)$] 
$\widetilde{\chi}(\calM^\ell(X))=\widetilde{\chi}(\calM^\ell(Y))$. 
\item[$(2)$] 
The magnitudes of $X$ and $Y$ coincide 
$\Mag(X)=\Mag(Y)$. 
\end{itemize}
\end{corollary}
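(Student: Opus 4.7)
The plan is to deduce both parts from Theorem~\ref{thm:sycamore} by feeding its dimension-preserving bijection into the discrete Morse machinery of \S\ref{sec:proj} and then into the Euler characteristic formula of Proposition~\ref{prop:inverse}. Since $G$ and $H$ are finite, so are $X$ and $Y$, so for each fixed $\ell\geq 0$ the simplicial complex $\Delta\Cau^\ell(X)$ is a finite simplicial complex and everything in sight will be a finite CW complex.

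For part $(1)$, I would begin by applying the projecting matching to the finite simplicial complex $\Delta\Cau^\ell(X)$. By Proposition~\ref{prop:acyclic}, this matching is acyclic, and by Proposition~\ref{prop:cr} its set of critical simplices is exactly $\Deltadot\Cau^\ell(X)\cup T^\ell(X)$, which forms a subcomplex of $\Delta\Cau^\ell(X)$. In the finite setting, Theorem~\ref{thm:morse}(a) gives that $|\Deltadot\Cau^\ell(X)\cup T^\ell(X)|$ is a deformation retract of $|\Delta\Cau^\ell(X)|$. Collapsing through the subcomplex $|\Deltadot\Cau^\ell(X)|$ (to which the deformation retraction descends at the level of pairs) yields a homotopy equivalence
\[
\calM^\ell(X)\simeq \frac{|\Deltadot\Cau^\ell(X)\cup T^\ell(X)|}{|\Deltadot\Cau^\ell(X)|},
\]
so $\calM^\ell(X)$ is homotopy equivalent to a pointed CW complex having one cell of dimension $\dim\bm{x}$ for each $\bm{x}\in T^\ell(X)$ (together with the basepoint). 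Consequently
\[
\widetilde{\chi}(\calM^\ell(X))=\sum_{\bm{x}\in T^\ell(X)}(-1)^{\dim\bm{x}},
\]
and the same formula holds with $X$ replaced by $Y$. The bijection $T^\ell(X)\stackrel{\cong}{\longrightarrow}T^\ell(Y)$ provided by Theorem~\ref{thm:sycamore} preserves the dimension of each critical cell (equivalently, the degree of each sequence), so the two sums are equal term by term, giving $(1)$.

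For part $(2)$, I would simply plug this equality into the Euler-characteristic expression for the magnitude given in Proposition~\ref{prop:inverse}:
\[
\Mag(X)=\sum_{\ell\geq 0}\widetilde{\chi}(\calM^\ell(X))\,q^\ell
=\sum_{\ell\geq 0}\widetilde{\chi}(\calM^\ell(Y))\,q^\ell=\Mag(Y).
\]
There is no analytic obstruction here because only finitely many $\ell$ appear as lengths of light-like sequences in a finite metric space, and the identity of formal sums in $\Hahn$ is term-wise.

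The main obstacle has already been overcome upstream: the combinatorial heart of the argument is the construction of the dimension-preserving bijection $T^\ell(X)\cong T^\ell(Y)$ in Theorem~\ref{thm:sycamore}, which in turn rests on the acyclicity and boundedness of the projecting matching (Proposition~\ref{prop:acyclic}) and on the characterization of critical cells in terms of twistable sequences (Proposition~\ref{prop:cr}). Once these are in hand, the present corollary is essentially a bookkeeping consequence: discrete Morse theory converts the critical-cell bijection into an equality of Euler characteristics, and Proposition~\ref{prop:inverse} converts the equality of Euler characteristics into an equality of magnitudes.
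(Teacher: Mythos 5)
Your overall strategy is the paper's: combine the dimension-preserving bijection $T^\ell(X)\cong T^\ell(Y)$ of Theorem \ref{thm:sycamore} with a critical-cell count to get $(1)$, then apply Proposition \ref{prop:inverse} for $(2)$. Part $(2)$ and the final formula $\widetilde{\chi}(\calM^\ell(X))=\sum_{\bm{x}\in T^\ell(X)}(-1)^{\dim\bm{x}}$ are correct. However, your route to that formula contains an unjustified step: you assert that the critical simplices $\Deltadot\Cau^\ell(X)\cup T^\ell(X)$ form a subcomplex and then invoke Theorem \ref{thm:morse}(a) to get a deformation retraction. In the sycamore-twist setting $H_0$ is in general nonempty, and then this set need not be a subcomplex. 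For instance, a light-like chain of the schematic form $(g,k_1,h_0,k_2,h_*)$ with $g\in\inter(G)$, $k_1,k_2\in K$, $h_0\in H_0$, $h_*\in H_*$ has no sticky subsequence (every candidate run either ends at $h_0\notin H_*$ or starts at $h_0\notin\inter(G)$), so it lies in $T^\ell(X)$; but if $d(k_1,k_2)=d(k_1,h_0)+d(h_0,k_2)$ (e.g.\ $h_0$ adjacent to both gluing vertices in Figure \ref{fig:sycamore}), its face obtained by deleting $h_0$ is still light-like and is itself sticky, hence matched and non-critical. Note that Lemma \ref{lem:shortcut} does not rescue you here, since it only forbids such shortcuts through points of $H_*$, not of $H_0$. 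So the hypothesis of Theorem \ref{thm:morse}(a) fails and the claimed homotopy equivalence is not established.

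The good news is that none of this is needed for an Euler-characteristic statement. The cells of $\calM^\ell(X)$ other than the basepoint are exactly the light-like simplices, and the projecting matching pairs light-like simplices only with light-like simplices (inserting or deleting a projected point preserves the length $\ell$), always in adjacent dimensions. Hence in the finite alternating sum $\widetilde{\chi}(\calM^\ell(X))=\sum_{\sigma\ \text{light-like}}(-1)^{\dim\sigma}$ every matched pair cancels, leaving $\sum_{\bm{x}\in T^\ell(X)}(-1)^{\dim\bm{x}}$ by Proposition \ref{prop:cr}; no acyclicity, subcomplex condition, or collapsibility is required. This purely combinatorial cancellation is what the paper's one-line proof is appealing to. Replace your deformation-retract paragraph with this counting argument and the rest of your proof, including part $(2)$, goes through verbatim.
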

\begin{proof}
When $X$ is a finite metric space, $\calM^\ell(X)$ is a finite CW complex 
for any $\ell\geq 0$. 
Then the Euler characteristic 
$\widetilde{\chi}(\calM^\ell(X))$ 
is determined by the number of critical cells of each dimension. 
By Theorem \ref{thm:sycamore}, 
there is a dimension preserving bijection between the set 
critical cells of $X$ and $Y$. 
This yields $(1)$. By Proposition \ref{prop:inverse}, we also have $(2)$. 
\end{proof}

\begin{remark}
As far as the authors know, it is still an open question whether 
$\MH^\ell_k(X)$ and 
$\MH^\ell_k(Y)$ are isomorphic when $X$ and $Y$ 
differ by a sycamore twist. 
A similar question on the magnitude homotopy types 
$\calM^\ell(X)$ and $\calM^\ell(Y)$ is also open. 
\end{remark}

\medskip

\noindent
{\bf Acknowledgements.} 
Yu Tajima was supported by JST SPRING, Grant Number JPMJSP2119. 
Masahiko Yoshinaga 
was partially supported by JSPS KAKENHI 
Grant Numbers JP22K18668, JP19K21826, JP18H01115. 
Part of this work was carried out while the authors were 
staying at Okayama University for a lecture series on magnitude homology 
by the second author in November 2022. 
We would like to express our gratitude to 
Professor Masao Jinzenji for the 
invitation and to the participants of the lecture series for inspiring conversations. 
We also would like to thank 
Professors Yasuhiko Asao, Kiyonori Gomi, 
Dmitry Feichtner-Kozlov, Paul M\"ucksch, 
Shin-ichi Ohta, Adri\'an Do\~na Mateo 
for helpful discussions and useful information. 
We deeply appreciate the referee(s) for careful reading 
and lots of valuable suggestions on the paper.

\end{document}